\documentclass[a4paper,notitlepage,twoside,reqno,11pt]{amsart}

\usepackage{anysize}
\marginsize{3.4cm}{3.4cm}{3cm}{3cm}%lrub

\usepackage{bbm,pifont,latexsym,dcolumn,indentfirst}
\usepackage[hypertexnames=false,bookmarksopen=true,linktocpage=true,pdfstartview={XYZ null null 1.25}]{hyperref}
\usepackage{amsthm,amsmath,amssymb,amscd,amsfonts,mathrsfs}
\usepackage{color,graphicx,xcolor,graphics,subfigure,extarrows,caption2}
\usepackage{pdfpages,titletoc}

\usepackage{autonum}

\newtheorem{thm}{Theorem}[section]
\newtheorem{lem}[thm]{Lemma}
\newtheorem{cor}[thm]{Corollary}
\newtheorem{prop}[thm]{Proposition}

 % "letter-numbered" theorems

\theoremstyle{definition}
\newtheorem*{defi}{Definition}
\newtheorem*{rmk}{Remark}

\newcommand{\EC}{\widehat{\mathbb{C}}}

\newcommand{\C}{\mathbb{C}}
\newcommand{\D}{\mathbb{D}}

\newcommand{\N}{\mathbb{N}}
\newcommand{\Q}{\mathbb{Q}}
\newcommand{\R}{\mathbb{R}}
\newcommand{\T}{\mathbb{T}}

\newcommand{\MB}{\mathcal{B}}
\newcommand{\MC}{\mathcal{C}}

\newcommand{\MO}{\mathcal{O}}
\newcommand{\MP}{\mathcal{P}}

\newcommand{\MV}{\mathcal{V}}

\newcommand{\MCV}{\mathcal{CV}}

\newcommand{\ii}{\textup{i}}

\newcommand{\diam}{\textup{diam}}
\newcommand{\dist}{\textup{dist}}

\newcommand{\Int}{\textup{int}}

%----------------------------------------------------------------------------------------------------------------

 %coding equation
\makeatletter\@addtoreset{equation}{section}\makeatother

%----------------------------------------------------------------------------------------------------------------
\begin{document}

\author[F. Yang]{FEI YANG}
\address{School of Mathematics, Nanjing University, Nanjing 210093, P. R. China}
\email{yangfei@nju.edu.cn}

\author[G. Zhang]{GAOFEI ZHANG}
\address{School of Mathematics, Nanjing University, Nanjing 210093, P. R. China}
\email{zhanggf@nju.edu.cn}

\author[Y. Zhang]{YANHUA ZHANG}
\address{School of Mathematical Sciences, Qufu Normal University, Qufu 273165, P. R. China}
\email{zhangyh0714@qfnu.edu.cn}

%---------------------------------------------------------------------------------------------------------------
\title[Local connectivity of transcendental Julia sets]{Local connectivity of Julia sets of some transcendental entire functions with Siegel disks}

\begin{abstract}
Based on the weak expansion property of a long iteration of a family of quasi-Blaschke products near the unit circle established recently, we prove that the Julia sets of a number of transcendental entire functions with bounded type Siegel disks are locally connected.
In particular, if $\theta$ is of bounded type, then the Julia set of the sine function $S_\theta(z)=e^{2\pi\ii\theta}\sin(z)$ is locally connected.
Moreover, we prove the existence of transcendental entire functions having Siegel disks and locally connected Julia sets with asymptotic values.
\end{abstract}

% AMS subject classifications (used in AMS journals)
\subjclass[2020]{Primary 37F10; Secondary 37F50}
% Open CTEX\MiKTeX\tex\latex\ams\classes\amsart, search 2010 and add 2020
% \subjclass[2010]{Primary 37F45; Secondary 37F50}

% AMS keywords (used in AMS journals)
\keywords{Julia sets; local connectivity; transcendental entire functions; Siegel disks}

% today's date, or fill in whatever date you prefer
\date{\today}

% acknowledge support, etc
%\thanks{This work was supported by NSFC (Grant Nos.\,12222107, 12171276) and NSF of Shandong Province (Grant No.\,ZR2023MA044).}

% dedication
% \dedicatory{Dedicated to Professor Donald Knuth on the occasion of his $100$th birthday}

\maketitle

%----------------------------------------------------------------------------------------------------------------
%{\setcounter{tocdepth}{1}
%\tableofcontents
%}

%----------------------------------------------------------------------------------------------------------------
\section{Introduction}

\subsection{Backgrounds}

Let $f$ be a non-linear holomorphic map with an irrationally indifferent fixed point $z_0$, i.e., $f(z_0)=z_0$ and $f'(z_0)=e^{2\pi\ii\theta}$ for some $\theta\in\R\setminus\Q$. If $f$ is locally conjugate to the linear map $R_\theta(\zeta)=e^{2\pi\ii\theta}\zeta$ in a neighborhood of $z_0$, then the maximal linearization domain containing $z_0$ is called the \textit{Siegel disk} of $f$ centered at $z_0$.
An irrational number $\theta$ is said to be of \textit{bounded type}, if the continued fraction expansion $\theta=[a_0;a_1,a_2,\cdots, a_n,\cdots]$ satisfies $\sup_{n\geqslant 1}\{a_n\}<+\infty$.
It was well known that if $\theta$ is of bounded type (actually, more general, if $\theta$ is a Diophantine number), then $f$ is locally linearizable at $z_0$ and $f$ has a Siegel disk containing $z_0$ \cite{Sie42}.

The topological properties of the boundaries of the Siegel disks of holomorphic maps have been studied extensively in the past four decades. They are motivated by Douady-Sullivan's conjecture in the 1980s which predicts that the Siegel disk boundaries of rational maps with degree at least two are Jordan curves. For rational maps, this conjecture has been settled under the assumption that the rotation number $\theta$ is of bounded type \cite{Zha11} (see \cite{Dou87}, \cite{Her87}, \cite{Zak99}, \cite{Shi01} for partial results). One may also refer to \cite{PZ04}, \cite{ABC04}, \cite{BC07}, \cite{Zha14}, \cite{She18a}, \cite{Che22b}, \cite{SY24} for related progresses. Besides rational maps, this conjecture has also been studied for some transcendental entire functions, such as the maps with the form $P(z)e^{Q(z)}$, where $P$ and $Q$ are polynomials \cite{Zak10} (see \cite{Gey01}, \cite{KZ09}, \cite{BF10}, \cite{Kat11}, \cite{KN22b} for partial results), the sine family and its generalizations (see \cite{Zha05}, \cite{Yan13}, \cite{Zha16}, \cite{ZFS20}), and some other families (see \cite{Che06}, \cite{CE18}). In particular, under the assumption that the rotation number is of bounded type, the Siegel disk boundaries mentioned above are proved to be quasi-circles containing at least one critical point.

For the topology of the whole Julia sets of rational maps containing Siegel disks, Petersen proved that the Julia sets of quadratic polynomials with a fixed bounded type Siegel disk are locally connected \cite{Pet96} (see \cite{Yam99} for an alternative proof). In these proofs, the Douady-Ghys surgery on analytic Blaschke products and the Petersen puzzles are essential. See \cite{PZ04}, \cite{She18a} and \cite{Yan23J} for further results based on these tools.
As an extension of Petersen's results (\cite{Pet96}, \cite{Pet98}), the local connectivity of the Julia sets of a class of rational maps with bounded type Siegel disks has been proved in \cite{WYZZ25} recently in which case neither the analytic Blaschke models nor puzzle structures are available.
For a rough survey of the local connectivity of Julia sets of other rational maps, see the introduction in \cite{WYZZ25}.

For transcendental entire functions, the local connectivity of Julia sets is also significant for understanding the global dynamics and this property was studied for a number of classes.
For examples, the cases of hyperbolic \cite{Mor99}, \cite{BFR15}, semi-hyperbolic \cite{BM02}, strongly geometrically finite \cite{ARS22} and strongly postcritically separated \cite{Par22} etc.
See also \cite{BD00b}, \cite{Mih12} and \cite{Osb13} for related results.
Recently, the local connectivity of the Julia sets of some transcendental meromorphic functions with tame Fatou components was studied in \cite{BFJK25}.
Our main aim in this paper is to study the local connectivity of the Julia sets of some transcendental entire functions containing bounded type Siegel disks.

\subsection{Main results}

Let $f$ be a transcendental entire function. We use $F(f)$ and $J(f)$ to denote the \textit{Fatou set} and \textit{Julia set} of $f$ respectively. Let $CV(f)$ and $AV(f)$ be the \textit{critical values} and \textit{asymptotic values} of $f$ in $\C$ respectively. The singular set $S(f)$ of $f$ is the collection of all \textit{singular values} of the inverse function $f^{-1}$, which is the closure of  $CV(f)\cup AV(f)$ in $\C$.
The \textit{postsingular set} of $f$ is
\begin{equation}\label{equ:P-f}
\MP(f):=\overline{\bigcup_{n\geqslant 0}f^{\circ n}\big(S(f)\big)}.
\end{equation}
Let $\MB:=\{f \text{ is transcendental entire}: S(f) \text{ is bounded}\}$ be the \textit{Eremenko-Lyubich} class (compare \cite{EL92}).
To obtain the local connectivity of the Julia set of $f$ containing Siegel disks, we require that the dynamics of $f$ on $\MP(f)$ is tame.

\begin{thm}\label{thm:entire}
Let $f\in \MB$ be a transcendental entire function with Siegel disks and no asymptotic values. Suppose that
\begin{enumerate}
\item every Siegel disk of $f$ is of bounded type with quasi-circle boundary in $\C$;
\item $CV(f)\cap J(f)$ is finite, $\MP(f)\cap J(f)$ is finite outside the union of all Siegel disk boundaries and $\MP(f)\cap F(f)$ is compact; and
\item the number of critical points in every component of $F(f)$ and the local degrees of all critical points of $f$ have a uniform upper bound.
\end{enumerate}
Then $J(f)$ is locally connected.
\end{thm}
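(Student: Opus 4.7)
The plan is to establish local connectivity of the continuum $J(f)\cup\{\infty\}\subset\widehat{\C}$ via the classical Whyburn/Torhorst criterion, which reduces the problem to verifying: (i) the boundary of every Fatou component of $f$ is locally connected, and (ii) for every $\epsilon>0$ only finitely many Fatou components have spherical diameter exceeding $\epsilon$. I would split the work along a natural dichotomy in space. Near the essential singularity at $\infty$, the Eremenko-Lyubich geometry of $\MB$-class maps (the logarithmic tract picture, together with $S(f)$ bounded and no asymptotic values) supplies enough expansion to force only finitely many Fatou components into any given annular neighborhood of $\infty$ with definite size. Inside a large disk $\D_R\supset\MP(f)\cap F(f)$, the argument will be a Petersen-type puzzle construction adapted to the transcendental setting and built on the quasi-Blaschke surgery models whose weak-expansion property is the recent input advertised in the abstract.

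\textbf{Locally connected boundaries, and puzzle shrinking.} For (i) I would first treat periodic Fatou components: Siegel disk boundaries are quasi-circles by hypothesis (a), and any attracting or parabolic immediate basins (which are consistent with compactness of $\MP(f)\cap F(f)$) have locally connected boundaries by the classical argument. Hypothesis (c) makes the restriction of $f$ to any Fatou component a proper branched cover of uniformly bounded degree, so local connectivity of boundaries pulls back cleanly to all iterated preimages, and under the present hypotheses there are neither wandering domains nor Baker/escaping components, so this covers everything. For (ii) I would, around each Siegel cycle, choose a graph $G_0\subset\C$ made of Jordan arcs that separates the nearby Fatou components into puzzle pieces, and refine by taking preimages $G_n:=f^{-n}(G_0)$. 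Shrinkage of the resulting pieces comes from two complementary sources: the weak expansion of long iterates of the quasi-Blaschke model, transferred through the quasi-conformal conjugacy to control pieces abutting a Siegel boundary; and a hyperbolic (Ma\~n\'e-type) expansion along orbits in $J(f)$ that stay away from the Siegel cycles, which is available because $CV(f)\cap J(f)$ is finite and $\MP(f)\cap J(f)$ is finite outside the Siegel boundaries. Combining these two shrinking mechanisms with the outer control near $\infty$ will yield (ii) and hence the theorem.

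\textbf{Main obstacle.} The hardest point, as I see it, is transferring weak expansion from the quasi-Blaschke model back to $f$ for puzzle pieces whose forward orbits return to a neighborhood of a Siegel cycle infinitely often. Since the model only describes $f$ on a neighborhood of each Siegel disk, the graph $G_0$ must be designed so that its preimages keep pieces inside the domain where the model applies at every return, and so that the possibly tiny per-return expansion factors accumulate to a definite shrinkage in the limit. A secondary, genuinely transcendental complication is that preimages of $G_0$ accumulate at $\infty$ and some complementary regions will be unbounded; one has to argue simultaneously that the puzzle pieces relevant to $J(f)$ remain bounded and that each unbounded complementary region has a single end at $\infty$, so that shrinking of diameters really does follow. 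Both difficulties will be steered by hypothesis (c) (uniform bounds on valence and local degree over Fatou components) together with the weak-expansion input, which together play the role that Yoccoz puzzles and bounded geometry play in the rational case.
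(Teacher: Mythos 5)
Your top-level framework (Whyburn's criterion for $J(f)\cup\{\infty\}$, splitting condition (b) between a weak-expansion mechanism near the Siegel boundaries and a hyperbolic mechanism away from them) matches the paper, but two of your steps are genuine gaps. First, you dispose of condition (i) for attracting basins "by the classical argument." There is no such classical argument for transcendental entire functions: an immediate attracting basin of an entire map can a priori be unbounded with non--locally-connected boundary, and ruling this out is where most of the work in this part of the paper goes. The paper builds an orbifold metric on $W=\C\setminus(\bigcup\overline{\Delta}_i\cup\bigcup\overline{B}_j)$ (à la Mihaljevi\'c and Pardo-Sim\'on, using $f\in\MB$, the finiteness of $\MP(f)\cap W$ and the uniform bound on local degrees) which is uniformly expanding only at points a definite distance from the Siegel disks; to use it on $\partial A_j$ one must first prove $\overline{A}_j\cap\overline{\Delta}_i=\emptyset$, and this separation is itself nontrivial --- the paper obtains it from chains of preimages of $\Delta$ shrinking to a repelling fixed point together with the Benini--Rempe landing theorem for escaping sets. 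None of this is present in your outline. (Relatedly, you assert there are no wandering domains without proof --- the paper needs a quasiconformal surgery to a postsingularly finite map --- and you entertain parabolic basins, which are in fact excluded by the compactness of $\MP(f)\cap F(f)$.)

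Second, for condition (ii) you propose a Petersen-type puzzle with an invariant graph $G_0$ and its preimages $f^{-n}(G_0)$. The paper deliberately avoids puzzles (and says they are not available here): instead it covers $\partial\Delta$ by finitely many bounded Jordan disks $V_{0,(i)}$ with $\overline{V}_{0,(i)}\cap\MP(f)\subset\partial\Delta$ and applies the weak-expansion result (Theorem \ref{thm:main-pre}, proved via the quasi-Blaschke model and half hyperbolic neighborhoods) directly to \emph{all} components of $f^{-n}(V_{0,(i)})$, combined with the semi-hyperbolic shrinking lemma for pullbacks away from $\MP(f)$; the finiteness at each fixed level $n$ comes simply from the fact that the $n$-th preimage components accumulate only at $\infty$. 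Your own "main obstacle" paragraph concedes that you do not know how to design $G_0$ so that its preimages stay in the domain where the model applies and so that unbounded complementary pieces are controlled; in the transcendental setting $f^{-1}(G_0)$ has infinitely many components accumulating at $\infty$, and no construction of a forward-invariant graph with bounded pieces is given or known under these hypotheses. As it stands, the proposal identifies the right ingredients in spirit but does not contain a proof of either half of Whyburn's criterion.
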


Note that each function in Theorem \ref{thm:entire} contains at least one recurrent critical point in the boundary of every cycle of Siegel disks and hence in the Julia set (see \cite{GS03}). To the best of our knowledge, this is the first result of the local connectivity of the Julia sets of transcendental entire functions containing Siegel disks.

For the sine family $S_\theta(z)=e^{2\pi\ii\theta}\sin (z)$ with bounded type $\theta$, it was proved in \cite{Zha05} that $S_\theta$ contains a Siegel disk centered at the origin which is bounded by a quasi-circle. Moreover, all critical point of $S_\theta$ have local degree two and they are mapped to the two symmetric critical values which lie on the boundary of the Siegel disk. Note that $S_\theta$ has no finite asymptotic values. Therefore, as an immediate application of Theorem \ref{thm:entire}, we have the following result. See Figure \ref{Fig:Siegel-sin}.

\begin{figure}[htbp]
  \setlength{\unitlength}{1mm}
  \includegraphics[width=0.95\textwidth]{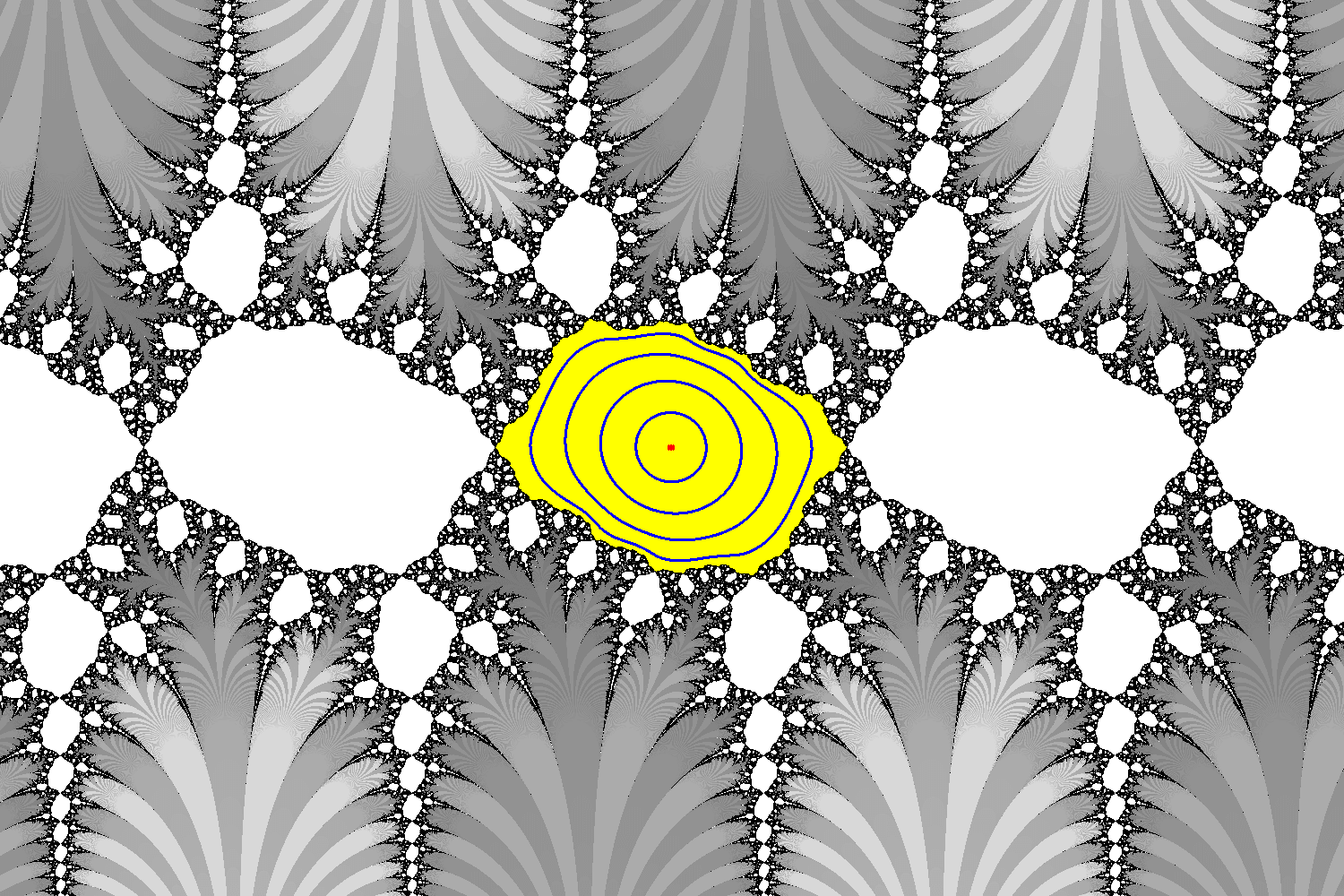}
  \caption{The dynamical plane of $S_\theta(z)=e^{2\pi\ii\theta}\sin(z)$, where $\theta=(\sqrt{5}-1)/2$ is of bounded type (courtesy of A. Ch\'{e}ritat). The Siegel disk is colored yellow with invariant curves and the rest Fatou components are colored white. The Julia set (colored black and layered gray) of $S_\theta$ is locally connected.}
  \label{Fig:Siegel-sin}
\end{figure}

\begin{cor}
For any bounded type $\theta$, the Julia set of $S_\theta(z) = e^{2 \pi \ii \theta} \sin(z)$ is locally connected.
\end{cor}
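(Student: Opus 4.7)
The plan is to verify the three hypotheses of Theorem \ref{thm:entire} for $f=S_\theta$ and invoke it. First, I would collect the singular data: the critical points of $S_\theta$ are $(k+\tfrac12)\pi$, $k\in\Z$, each of local degree $2$, with critical values $\pm e^{2\pi\ii\theta}$. Since $\sin$ has no finite asymptotic values, $S(S_\theta)=\{\pm e^{2\pi\ii\theta}\}$ is bounded, so $S_\theta\in\MB$ and $AV(S_\theta)\cap\C=\emptyset$; the ambient hypotheses of the theorem are met.

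Next, for condition~(1), I would invoke Zhang's theorem recalled in the introduction: $S_\theta$ has a bounded-type Siegel disk $\Delta$ centered at $0$ whose boundary is a quasi-circle passing through both critical values $\pm e^{2\pi\ii\theta}$. The key structural consequence is that $\partial\Delta$ is forward invariant and contains every singular value, so $\MP(S_\theta)\subseteq\partial\Delta$. To upgrade this from "the Siegel disk $\Delta$" to "every Siegel disk of $S_\theta$", I would appeal to the standard principle that each Siegel-disk boundary in a cycle is contained in $\MP(f)$ (an entire-function analog of Fatou's theorem, valid in $\MB$). Any further Siegel disk $\Delta'$ would then satisfy $\partial\Delta'\subseteq\partial\Delta$; since $\partial\Delta$ is a Jordan curve, the bounded connected open set $\Delta'$ would have to sit either inside $\Delta$ (impossible, as they are distinct Fatou components) or in the unbounded complementary component (impossible for a bounded connected open set whose entire boundary lies in $\partial\Delta$). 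Hence $\Delta$ is the unique cycle of Siegel disks of $S_\theta$.

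Conditions~(2) and~(3) then follow routinely from $\MP(S_\theta)\subseteq\partial\Delta$ and the total invariance of $J(S_\theta)$. Indeed, $CV(S_\theta)\cap J(S_\theta)=\{\pm e^{2\pi\ii\theta}\}$ is finite; $\MP(S_\theta)\cap J(S_\theta)\subseteq\partial\Delta$, so its part outside Siegel-disk boundaries is empty; and $\MP(S_\theta)\cap F(S_\theta)=\emptyset$ is trivially compact. Since the critical values lie in $J(S_\theta)$, total invariance places all critical points in $J(S_\theta)$, so every Fatou component of $S_\theta$ contains no critical points, and every local degree equals $2$; the uniform bounds required by~(3) are immediate. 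The only nontrivial step in this plan is the uniqueness of the Siegel disk cycle; once that is in hand, Theorem \ref{thm:entire} delivers the local connectivity of $J(S_\theta)$.
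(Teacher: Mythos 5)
Your proposal is correct and follows essentially the same route as the paper: cite Zhang's result that $S_\theta$ has a bounded-type Siegel disk at $0$ with quasi-circle boundary containing both critical values, note the absence of finite asymptotic values, and verify the hypotheses of Theorem \ref{thm:entire}. The paper treats the corollary as an immediate application and does not spell out the uniqueness of the Siegel disk cycle; your explicit argument for it (via $\partial\Delta'\subseteq\MP(S_\theta)\subseteq\partial\Delta$) is a welcome extra detail, though to rule out $\Delta'$ lying in the unbounded complementary component you should argue that $\Delta'$ would then be clopen in that component and hence equal to it, contradicting the unboundedness of $J(S_\theta)$, rather than assuming $\Delta'$ bounded a priori.
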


Besides the sine family, Theorem \ref{thm:entire} can be also applied to some other transcendental entire functions containing bounded type Siegel disks (see \cite{Che06}, \cite{Yan13}, \cite{CE18} and \cite{ZFS20}).

\medskip

If a transcendental entire function has an asymptotic value in the Fatou set, then the Fatou set contains an unbounded component and the Julia set is not locally connected \cite[Lemma 2.4]{BFR15}.
For $E(z)=P(z)e^{Q(z)}$, where $P$ and $Q$ are non-constant polynomials such that $E$ has a bounded type Siegel disk centered at $0$, Zakeri proved that this Siegel disk is bounded by a quasi-circle \cite{Zak10}. However, the Julia set of $E$ is not locally connected since $0$ is an asymptotic value of $E$.
By generalizing Theorem \ref{thm:entire} suitably (see Theorem \ref{thm:main-pre}), we prove that some transcendental entire functions with Siegel disks can have locally connected Julia sets even if they have asymptotic values in the Julia sets or if they are not contained in the class $\MB$.

\begin{thm}\label{thm:lc-asymp}
There exists $\lambda\in\C$ with $|\lambda|>1$ such that $f(z)=\lambda ze^z$ has a cycle of Siegel disks and a locally connected Julia set.
Moreover, $g(z)=e^z+z+\log\lambda$ has infinitely many Siegel disks and a locally connected Julia set.
\end{thm}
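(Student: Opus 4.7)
The plan is to prove the statement for $f(z)=\lambda ze^z$ first and then lift it to $g(z)=e^z+z+\log\lambda$ via the semiconjugacy $\phi\circ g=f\circ\phi$ with $\phi(w)=e^w$, verified by the direct computation $\phi(g(w))=e^{e^w+w+\log\lambda}=\lambda e^we^{e^w}=f(\phi(w))$. As soon as $f$ has a Siegel disk $\Delta$---necessarily disjoint from the single omitted value $\{0\}=\C\setminus\phi(\C)$---the set $\phi^{-1}(\Delta)$ decomposes into countably many connected components, each of which is a Siegel disk of $g$ centered at a logarithm of the center of $\Delta$; this accounts for the infinitely many Siegel disks of $g$ claimed.

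To select $\lambda$, I would exploit the second fixed point of $f$: besides the origin (with multiplier $\lambda$, hence repelling whenever $|\lambda|>1$), $f$ fixes $z_0=-\log\lambda$ with multiplier $1-\log\lambda$. Setting $\lambda:=\exp(1-e^{2\pi\ii\theta})$ for a bounded type $\theta\in\R\setminus\Q$ yields $|\lambda|=\exp(1-\cos 2\pi\theta)>1$, places $0$ in $J(f)$ as a repelling fixed point, and makes $z_0$ an irrationally indifferent fixed point of multiplier $e^{2\pi\ii\theta}$. Siegel's theorem then produces a Siegel disk $\Delta$ at $z_0$, while Zakeri's theorem \cite{Zak10} for maps of the form $P(z)e^{Q(z)}$ ensures $\partial\Delta$ is a quasi-circle containing the unique critical point $-1$. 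With this choice, $CV(f)\cap J(f)=\{-\lambda/e\}$ is a singleton; $\MP(f)\cap J(f)$ equals $\{0\}$ together with the closure of the forward orbit of $-1$, which is confined to $\partial\Delta$; $\MP(f)\cap F(f)=\emptyset$; $f\in\MB$; and the sole critical point has local degree two. All hypotheses needed to invoke the generalization Theorem \ref{thm:main-pre} (which relaxes Theorem \ref{thm:entire} to permit an asymptotic value inside $J(f)$) are thus verifiable, yielding local connectivity of $J(f)$.

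To transfer local connectivity from $J(f)$ to $J(g)$, note that $\phi'(w)=e^w$ never vanishes, so $\phi$ is a local biholomorphism everywhere in $\C$. For each $w_0\in J(g)$ choose a small open disk $U\ni w_0$ on which $\phi|_U$ is biholomorphic onto its image; the semiconjugacy then implies $J(g)\cap U=(\phi|_U)^{-1}(J(f)\cap\phi(U))$, and local connectivity of $J(f)$ at $\phi(w_0)$ transports through the biholomorphism $\phi|_U$ to local connectivity of $J(g)$ at $w_0$ (the excluded value $\phi(w_0)=0$ cannot occur). The main obstacle I anticipate is not this final lifting step but rather the generalization Theorem \ref{thm:main-pre} itself: its proof must absorb both the presence of an asymptotic value of $f$ in $J(f)$ and, for a direct treatment of $g$, the failure of the $\MB$ hypothesis caused by the unbounded sequence $\{-1+(2k+1)\pi\ii+\log\lambda\}_{k\in\Z}$ of critical values of $g$. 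The semiconjugacy route outlined here sidesteps the second of these difficulties, which is why proving the theorem for $f$ first and lifting is the more economical strategy.
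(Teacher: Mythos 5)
Your overall architecture (prove the result for $f$, then lift to $g$ through $\phi(w)=e^w$ using $\phi\circ g=f\circ\phi$) matches the paper's, and the lifting step is fine. The gap is in how you produce a bounded type Siegel disk of $f_\lambda$ \emph{with quasi-circle boundary containing the critical point}. You place the Siegel disk at the second fixed point $z_0=-\log\lambda$ by choosing $\lambda=\exp(1-e^{2\pi\ii\theta})$, and then invoke Zakeri's theorem for maps $P(z)e^{Q(z)}$ to get the quasi-circle boundary. But Zakeri's result applies to a bounded type Siegel disk centered at the \emph{origin} of a map of the exact form $P(z)e^{Q(z)}$; your Siegel point is $z_0\neq 0$, and conjugating by the translation $z\mapsto z+z_0$ turns $f_\lambda$ into $(z+z_0)e^{z}-z_0$, which is no longer of that form (note $\lambda e^{z_0}=1$). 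So the quasi-circle property — which is exactly the hypothesis needed to feed Theorem \ref{thm:main-pre} — is not established by anything you cite. This is not a technicality: the paper's closing Remark singles out precisely your parameter ($f_\rho(z)=e^{1-\rho}ze^z$ with $\rho=e^{2\pi\ii\theta}$, i.e.\ $\lambda=e^{1-\rho}$) and states that the quasi-circle boundary at the fixed Siegel disk is only supported by computer experiment and is left as an open problem.

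The paper circumvents this by renormalization rather than by a linearization theorem: Fagella's baby Mandelbrot copies $\mathcal{M}_k\subset\C\setminus\overline{\D}$ give parameters $\lambda$ for which $f_\lambda^{\circ 2}:U_\lambda\to V_\lambda$ is quadratic-like and hybrid equivalent to $z\mapsto e^{2\pi\ii\theta}z+z^2$; the quadratic's bounded type Siegel disk is a quasi-disk with the critical point on its boundary (Douady–Herman–Świątek–Zakeri), and the quasiconformal hybrid conjugacy transports this to a \emph{period-two} cycle of Siegel disks of $f_\lambda$ with quasi-circle boundaries. One then checks, as you do, that $S(f_\lambda)=\{0,-\lambda/e\}$ forces the Fatou set to consist only of the Siegel cycle and its preimages, and applies Theorem \ref{thm:main-pre} to $f_\lambda^{\circ 2}$. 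If you want to salvage your argument, either switch to this renormalization construction (accepting period two), or supply an actual proof of the quasi-circle property at $-\log\lambda$ — which would be a new result beyond what the paper establishes.
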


Note that the function $f$ in Theorem \ref{thm:lc-asymp} has an asymptotic value at $0$ (which is also a repelling fixed point) and $g$ satisfies $f(e^z)=e^{g(z)}$ but it is not contained in the class $\MB$.

For the study of the boundedness of the Siegel disks of transcendental entire functions, we also refer to \cite{Her85}, \cite{Rem04}, \cite{Rem08}, \cite{BF10} and \cite{BF18}.
In fact, to apply Theorem \ref{thm:entire} (or Theorem \ref{thm:main-pre}), it is necessary to know a priori that the Siegel disk boundaries of the considered transcendental entire function are quasi-circles.
Although some related results are established by quasiconformal surgery or polynomial-like renormalization theory, this is still a challenge in general.

\subsection{Idea of the proofs}

To obtain the local connectivity of the Julia sets of hyperbolic, (strongly) subhyperbolic and geometrically finite maps (including rational maps and transcendental entire functions), constructing uniformly expanding metrics near the Julia sets is the main strategy. For the maps which do not have uniform expanding metrics near the Julia sets, constructing puzzles is another powerful method (which is valid for some Siegel polynomials \cite{Pet96}, \cite{Pet98}, \cite{PZ04}).
For general rational maps and transcendental entire functions with Siegel disks, neither uniformly expanding metrics nor puzzles are available.

In \cite{WYZZ25}, the authors study the local connectivity of some rational maps with bounded type Siegel disks when the critical orbits are well controlled.
The proof there is based on introducing a quasi-Blaschke model and then capturing a weak expansion property near boundaries of bounded type Siegel disks under a long iteration.
In fact, the key result (see \cite[Lemma 4.5]{WYZZ25} or Lemma \ref{lem:back-1}) about the expanding property is also valid for some transcendental entire functions.

Similar to the rational case, we apply Whyburn's local connectivity criterion (see Lemma \ref{lem:LC-criterion}) to the maps $f$ in Theorem \ref{thm:entire}. Firstly, we need to prove that all Fatou components of $f$ are bounded and have locally connected boundaries. To obtain this, we introduce an orbifold metric such that $f$ is uniformly expanding in a domain $W$ whose boundary has positive distance from the boundary of every Siegel disk. The construction of the orbifold metric is strongly inspired by \cite{Mih12} and \cite{Par22}, where the strongly subhyperbolic transcendental entire functions are studied. To find the domain $W$, we use the result about the landing of some escaping subsets at periodic points in \cite{BR20} (see Theorem \ref{thm:Benini-Rempe}) to separate the boundaries of attracting basins (if any) from the boundaries of Siegel disks.
Secondly, we need to prove that the spherical diameters of Fatou components of $f$ tend to zero uniformly. This can be divided into two cases. For Siegel disks and their preimages, we use the classical shrinking lemma in the transcendental version (see Lemma \ref{lem:semi-hyperbolic}) and the weak expansion property near boundaries of bounded type Siegel disks (see Lemma \ref{lem:back-1}). For immediate attracting basins and their preimages, we rely on the uniform expanding of the orbifold metric mentioned above (see Lemma \ref{lem:expand-orbifold}).

In fact, before obtaining Theorem \ref{thm:entire}, we shall prove an intermediate result (Theorem \ref{thm:main-pre}) about the weak expansion property near boundaries of bounded type Siegel disks, which is essentially included in \cite{WYZZ25}. In particular, this result can be applied to more general transcendental entire functions with Siegel disks. For example, in the last section, we consider a family of transcendental entire functions containing a fixed asymptotic value and prove that the bounded type Siegel disks are quasi-circles and the Julia sets are locally connected (Theorem \ref{thm:lc-asymp}).

\medskip
\noindent\textbf{Notations.}
We use $\mathbb{C}$ and $\widehat{\mathbb{C}}=\mathbb{C}\cup\{\infty\}$ to denote the complex plane and the Riemann sphere respectively. Let $\D(a,r)=\{z\in\C:|z-a|<r\}$ be the Euclidean disk centered at $a\in\C$ with $r>0$. In particular, $\D:=\D(0,1)$ is the unit disk and $\T:=\partial\D$ is the unit circle.

Let $X$ be $\C$, $\EC$ or a hyperbolic domain in $\C$. We use $\dist_X(\cdot,\cdot)$ and $\diam_X(\cdot)$, respectively, to denote the distance and diameter with respect to the Euclidean, spherical or hyperbolic metric.
Let $A,B$ be two subsets in $\C$. We say that $A$ is compactly contained in $B$ if the closure of $A$ is compact and contained in the interior $\Int(B)$ of $B$ and we denote it by $A\Subset B$.

\medskip
\noindent\textbf{Acknowledgements.}
This work was supported by NSFC (Grant Nos.\,12222107, 12171276) and NSF of Shandong Province (Grant No.\,ZR2023MA044).

\section{Weak expanding and quasi-Blaschke models}\label{sec:general-quasi-Blaschke}

In this section we first state a result (Theorem \ref{thm:main-pre}) about the weak expansion property near the Siegel disk boundaries for some entire functions. Next we prepare some basic facts about pulling backs of Jordan disks and introduce the quasi-Blaschke models for the functions in Theorem \ref{thm:main-pre}. Finally we recall the half hyperbolic neighborhoods in \cite{WYZZ25} and use some results therein to prove Theorem \ref{thm:main-pre}.

\subsection{The weak expansion property}

The holomorphic functions cannot be uniformly expanding near the Siegel disk boundaries. However, it turns out that some kind of weak expansion property can be captured in certain cases.
The following result is the key ingredient in the proof of Theorem \ref{thm:entire}.

\begin{thm}\label{thm:main-pre}
Let $f$ be a transcendental entire function having a fixed bounded type Siegel disk $\Delta$ with quasi-circle boundary in $\C$. Suppose that
\begin{enumerate}
  \item $\dist_{\C}(\MP(f)\setminus\partial{\Delta},\partial{\Delta})>0$;
  \item there are only finitely many critical values and no asymptotic values whose forward orbits intersect $\partial\Delta$; and
  \item the local degrees of the critical points whose forward orbits intersect $\partial\Delta$ have a uniform upper bound.
\end{enumerate}
Then for any $\varepsilon>0$ and bounded Jordan disk $V_0\subset \C\setminus\overline{\Delta}$ with $\overline{V}_0 \cap \MP(f)\neq\emptyset$ and $\overline{V}_0 \cap \MP(f)\subset \partial\Delta$, there exists $N \geqslant 1$, such that $\diam_{\EC}(V_n)<\varepsilon$ for all $n\geqslant N$, where $V_n$ is any connected component of $f^{-n}(V_0)$.
\end{thm}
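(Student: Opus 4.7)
The plan is to reduce Theorem \ref{thm:main-pre} to the weak expansion property near the unit circle for a quasi-Blaschke product (the content of Lemma \ref{lem:back-1}) by transporting the dynamics of $f$ near $\partial\Delta$ through a Douady--Ghys type surgery, and then handling the pull-back components that drift away from $\partial\Delta$ by a classical shrinking-lemma argument that exploits hypothesis (1).

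\medskip

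\noindent\textbf{Step 1: quasi-Blaschke model along }$\partial\Delta$. Since $\Delta$ is a bounded type Siegel disk bounded by a quasi-circle, a Douady--Ghys type surgery replaces the rotation on $\Delta$ by a blown-up power map and produces a quasi-regular map $F$ that agrees with $f$ on a neighborhood $U$ of $\C\setminus\Delta$ and is quasiconformally conjugate, via some map $\phi$, to a quasi-Blaschke product $B$ on a fixed neighborhood of $\T$ whose rotation number on $\T$ equals the rotation number of $\Delta$. By hypotheses (2) and (3), only finitely many critical orbits of $f$ ever touch $\partial\Delta$ and each has uniformly bounded local degree; under $\phi$ this translates into $B$ having only finitely many critical values on $\T$, with a uniform bound on their local degrees. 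Thus $B$ lies in the family to which Lemma \ref{lem:back-1} applies.

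\medskip

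\noindent\textbf{Step 2: pull-backs that stay near} $\partial\Delta$. First choose $U$ so small that, by hypothesis (1), $\overline U \cap (\MP(f)\setminus\partial\Delta) = \emptyset$. Given $V_0$ as in the statement, its image $\phi(V_0\cap U)$ (together with finitely many similar pieces if $V_0$ is not already contained in $U$) is a bounded Jordan domain in the complement of the unit disk whose closure meets the postcritical set of $B$ only inside $\T$. Lemma \ref{lem:back-1} then guarantees that any component of $B^{-n}(\phi(V_0\cap U))$ has spherical diameter less than $\varepsilon/2$ once $n$ is large enough. Transporting this conclusion back through $\phi$, every pull-back chain of $V_0$ under $f$ that stays inside $U$ long enough produces components of small spherical diameter.

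\medskip

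\noindent\textbf{Step 3: pull-backs that leave }$U$. A component $V_n$ which ever leaves $U$ arrives, at some intermediate time, in the open set $\C\setminus(\overline\Delta\cup\MP(f))$ on which $f$ is an unramified covering by construction (hypothesis (1) together with (2) and (3) guarantee that the relevant singular values are confined to $\partial\Delta$ and a compact set bounded away from $U$). A transcendental version of the shrinking/Koebe argument (Lemma \ref{lem:semi-hyperbolic}) applied in the hyperbolic metric of $\EC\setminus(\MP(f)\cup\{\infty\})$ then forces the spherical diameters of such components to zero as $n\to\infty$; the hypothesis that $f$ has no asymptotic values with orbit meeting $\partial\Delta$ is what prevents escape to $\infty$ from breaking this control. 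Combining Step 2 and Step 3, and using compactness of $\overline{V_0}$ in $\C$, yields the uniform $N$ claimed in the theorem.

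\medskip

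\noindent\textbf{Main obstacle.} The serious work is in Step 1--2: one must verify that the quasi-Blaschke surgery is compatible with the hypotheses of \cite{WYZZ25} even though $f$ is transcendental rather than rational, and that pull-back components of $V_0$ under $f^n$ correspond, along orbits staying near $\partial\Delta$, to pull-back components of a corresponding Jordan disk under $B^n$. Hypotheses (2) and (3) are tailored to make this correspondence genuine, but one still has to keep track of finitely many exceptional orbits and confirm that the bounded-local-degree assumption survives the quasiconformal conjugacy. A secondary subtlety is the transcendental behaviour at $\infty$: without the $\MB$ assumption here, one relies on the absence of asymptotic values meeting $\partial\Delta$ to exclude the only route by which pull-back components could fail to have controlled spherical diameter.
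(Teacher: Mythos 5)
Your overall strategy --- pass to a quasi-Blaschke model near $\partial\Delta$, use Lemma \ref{lem:back-1} for orbits staying near the circle, and a shrinking-lemma argument for components that drift away --- is the same as the paper's (the paper builds its model $G$ by reflecting the outside dynamics across $\T$ via $z\mapsto 1/\bar z$ rather than by a Douady--Ghys gluing, but that is cosmetic). However, there is a genuine gap in your Step 2. Lemma \ref{lem:back-1} does \emph{not} apply to $\phi(V_0\cap U)$ directly: it applies only to half hyperbolic neighborhoods $H_d(I_0)$ of arcs with $|I_0|<r(d,\varepsilon)$, whereas the arc $\overline{V}_0\cap\T$ handed to you by the theorem can be long. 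The actual content of the paper's proof of Proposition \ref{prop:thm-pre} (Steps 3--6) is an induction on backward iterates: at each stage $G^{-1}$ produces finitely many components still touching $\T$ (the collection $\MV_n'$) and infinitely many that detach from $\T$ (the collection $\MV_n''$); because $G|_\T$ is conjugate to an irrational rotation and carries a critical point on $\T$, after a \emph{finite} number $n_0$ of backward steps every arc $\overline{V}_{n_0}\cap\T$ has length less than $r$, and only then does Lemma \ref{lem:back-1} take over. Your proposal contains no argument for why the near-circle pullbacks eventually fall into the regime where the weak expansion lemma applies, and this termination step is not automatic.

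A second, related weakness is your treatment of components that leave the neighborhood of $\partial\Delta$. You assert that $f$ is an unramified covering over the relevant region, but a detaching component $V_1$ may well contain a critical point whose critical value lies on $\overline{V}_0\cap\partial\Delta$, and later pullbacks may meet the finitely many postsingular points off $\T$ that eventually land on $\T$ (the set $\widehat{\MP}(G)$ of \eqref{equ:P-G-hat}). To run Lemma \ref{lem:semi-hyperbolic} one needs a \emph{uniform} degree bound along every such pullback chain; the paper secures this with the enlargement construction of Lemma \ref{lem:take-pre-2}, which produces a Jordan disk $U_0\supset\overline{V}_0$ whose detached preimages meet $\MP(G)$ in at most one point. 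Also, the hyperbolic metric of $\EC\setminus(\MP(f)\cup\{\infty\})$ that you invoke is useless near $\partial\Delta\subset\MP(f)$ --- the absence of uniform expansion there is precisely the difficulty the whole theorem is designed to circumvent --- so that part of your Step 3 should be replaced by the bounded-degree shrinking lemma. Your Step 1 and the identification of the key lemmas are correct; what is missing is the combinatorial bookkeeping that makes them applicable.
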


The proof of Theorem \ref{thm:main-pre} will be given in \S\ref{subsec:pf-thm-gene}.
Note that in Theorem \ref{thm:main-pre}, the function $f$ is not necessarily contained in the class $\MB$, the Julia set of $f$ is not necessarily connected and $f$ is allowed to have asymptotic values. We require $f\in\MB$ in Theorem \ref{thm:entire} since we need to consider the attracting basins and prove that $f$ is uniformly expanding with respect to certain orbifold metric in some domain (see \S\ref{sec:thm-pf}). The shrinking property under the inverse iterations of $f$ is only considered in the backward orbits of some outer half-neighborhoods of the Siegel disk boundary. Moreover, the condition $\emptyset\neq\overline{V}_0 \cap \MP(f)\subset \partial\Delta$ is non-empty since according to \cite{GS03}, $\partial\Delta$ contains at least one (recurrent) critical point and $\partial\Delta\subset \MP(f)$.

%As we have mentioned before, the Julia set of a transcendental entire function cannot be locally connected if its Fatou set contains asymptotic values.  Besides Theorem \ref{thm:entire}, as further applications of Theorem \ref{thm:main-pre}, we prove that there exist locally connected Julia sets of transcendental entire functions containing asymptotic values or not in the class $\MB$.

%We would like to mention that the existence of complex a prior bounds for multicritical circle maps was established in \cite{ESY22}, which provides a possible way to study Theorem \ref{thm:main-pre}.

\subsection{Pulling backs and the shrinking lemma}

For a transcendental entire function $f$, recall that $S(f)$ is the set of singular values of $f$.
The following result is useful when we consider the pullbacks of simply connected domains (especially, Jordan disks) under $f$.
For a proof, see \cite[Propositions 2.8 and 2.9]{BFR15}.

\begin{prop}\label{prop:take-pre}
Let $f$ be a transcendental entire function, $V \subset \mathbb{C}$ be a simply connected domain, and $\widetilde{V}$ be a component of $f^{-1}(V)$. Then either
\begin{itemize}
\item[(1)] $f: \widetilde{V} \rightarrow V$ is a proper map and hence has finite degree; or
\item[(2)] $f^{-1}(w) \cap \widetilde{V}$ is infinite for every $w \in V$, with at most one exception. In this case, either $\widetilde{V}$ contains an asymptotic curve corresponding to an asymptotic value in $V$, or $\widetilde{V}$ contains infinitely many critical points.
\end{itemize}
If in addition $V \cap S(f)$ is compact, we have
\begin{itemize}
\item[(A)] if $\sharp(V \cap S(f)) \leqslant 1$, then $\widetilde{V}$ contains at most one critical point of $f$;
\item[(B)] in case (1), if $V$ is a bounded Jordan disk such that $\partial V \cap S(f)=\emptyset$, then $\widetilde{V}$ is also a bounded Jordan disk;
\item[(C)] in case (2), the point $\infty$ is accessible from $\widetilde{V}$.
\end{itemize}
\end{prop}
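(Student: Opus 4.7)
My plan is to first prove the dichotomy between (1) and (2), and then establish (A), (B), (C) under the additional compactness hypothesis on $V\cap S(f)$ by exploiting the unramified covering $f:\widetilde V\setminus f^{-1}(S(f))\to V\setminus S(f)$ together with the simple connectivity of $V$.

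To set up the dichotomy I would test properness of $g:=f|_{\widetilde V}$. If $g$ fails to be proper, there is a sequence $z_n\in\widetilde V$ leaving every compact of $\widetilde V$ with $f(z_n)\to w_0$ for some $w_0\in V$. A finite limit point $z^*\in\partial\widetilde V\cap\mathbb C$ is ruled out because $f(z^*)=w_0\in V$ would yield an open neighborhood of $z^*$ mapping into $V$, and by connectedness with the $z_n\in\widetilde V$ this neighborhood would in fact lie in $\widetilde V$, contradicting $z^*\in\partial\widetilde V$. So $z_n\to\infty$ and $\widetilde V$ is unbounded. A Gross--Iversen lifting of a simple arc in $V$ ending at $w_0$ along inverse branches of $f$ then produces either a continuous asymptotic curve in $\widetilde V$ whose $f$-image tends to $w_0$, or an accumulation of critical points of $f$ in $\widetilde V$ when the inverse branches collide. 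The count of fibers over other $w\in V$ is settled by the same covering picture: the unramified cover $\widetilde V\setminus f^{-1}(S(f))\to V\setminus S(f)$ must have infinite degree, since a finite-degree cover would extend across $V\cap S(f)$ into a proper branched cover forcing case (1); the single exceptional $w$ in (2) is then precisely the asymptotic value witnessed by the curve above.

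For (A), two distinct critical points in $\widetilde V$ would both map into $V\cap S(f)$, hence by $\sharp(V\cap S(f))\leqslant 1$ to a common value $v\in V$. Removing the fiber of $v$ yields a connected unramified cover $\widetilde V\setminus f^{-1}(v)\to V\setminus\{v\}$; since $\pi_1(V\setminus\{v\})\cong\mathbb Z$, any connected cover is cyclic, so the monodromy around $v$ acts as a single cycle on every fiber, and after branched extension there is exactly one preimage of $v$ in $\widetilde V$, contradicting the existence of two critical points. For (B), I would first rule out $\widetilde V$ being unbounded in case (1): if $z_n\to\infty$ in $\widetilde V$ with $f(z_n)\to w^*\in\partial V$, tracking one of the $d$ inverse branches of $f$ near $w^*$ (well-defined because $\partial V\cap S(f)=\emptyset$) upgrades this to a continuous asymptotic path, placing $w^*\in AV(f)\subset S(f)\cap\partial V=\emptyset$, absurd. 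Simple connectedness then follows from the maximum modulus principle: for any Jordan curve $\gamma\subset\widetilde V$ bounding a disk $\Omega\subset\mathbb C$, the image $f(\overline\Omega)$ lies in the polynomially convex hull of $f(\gamma)$, which is contained in the simply connected $V$; hence $\overline\Omega\subset f^{-1}(V)$, and by connectedness $\overline\Omega\subset\widetilde V$. The continuous extension $f:\overline{\widetilde V}\to\overline V$ together with $\partial V\cap S(f)=\emptyset$ makes $f|_{\partial\widetilde V}:\partial\widetilde V\to\partial V$ a finite covering of the circle $\partial V$, which combined with simple connectedness of the bounded $\widetilde V$ forces $\partial\widetilde V$ to be a single Jordan curve. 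Part (C) follows immediately from the asymptotic curve constructed in case (2), which exhibits $\infty$ as an accessible boundary point of $\widetilde V$.

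The hardest step I expect is the passage from the non-properness sequence $z_n\to\infty$, $f(z_n)\to w_0$ to a genuine continuous asymptotic path, since this underpins both the fiber-count half of (2) and the dichotomy between an asymptotic curve and infinitely many critical points. Extracting limits alone only yields a sequential asymptotic value; producing an actual path requires lifting a simple arc in $V$ ending at $w_0$ through inverse branches of $f$ into $\widetilde V$ and verifying that the lift either escapes to $\infty$ (giving the asymptotic curve and accessibility in (C)) or accumulates at critical points of $f$, whose perturbations along nearby arcs then feed back infinitely many critical points into $\widetilde V$.
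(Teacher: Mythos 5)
The paper gives no proof of this proposition at all --- it simply cites \cite[Propositions 2.8 and 2.9]{BFR15} --- so your argument has to stand on its own, and its core step does not. In the dichotomy (1)/(2) there is no hypothesis whatsoever on $V\cap S(f)$: the singular set may meet $V$ in a non-discrete set (it can even be all of $\C$), so $V\setminus S(f)$ can be empty or badly disconnected, the ``degree'' of $\widetilde V\setminus f^{-1}(S(f))\to V\setminus S(f)$ need not be well defined, and the asserted extension of a finite-degree cover ``across $V\cap S(f)$'' to a proper branched cover is unjustified exactly where it matters: extension is classical across isolated punctures, not across a general set of singular values, and asymptotic values inside $V$ are precisely the obstruction. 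Even under the compactness hypothesis imposed later, your covering picture would at best show that points of $V\setminus S(f)$ have infinite fibres, leaving potentially every point of the (possibly uncountable) set $V\cap S(f)$ as an exception, whereas the statement allows a single exception in all of $V$; the identification of the exceptional point with ``the asymptotic value witnessed by the curve'' is an assertion, not an argument. Likewise the alternative ``asymptotic curve or infinitely many critical points'' is only invoked by name (``Gross--Iversen lifting''): the right mechanism is continuation of inverse branches along arcs in $V$, whose only failure modes are escape to $\infty$ (giving an asymptotic curve in $\widetilde V$ over a point of $V$) or accumulation at a critical point which necessarily lies in $\widetilde V$; but one must then prove that finitely many critical points in $\widetilde V$ together with the absence of such an asymptotic curve forces properness and the fibre count. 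You yourself flag this as the hardest step and do not supply it, so the proof of (1)/(2) is incomplete.

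Part (A) is essentially correct: $f^{-1}(v)$ is discrete, so $\widetilde V\setminus f^{-1}(v)$ is a connected cover of $V\setminus\{v\}$, and the cyclic/universal monodromy analysis works, granted the standard (but not stated) fact that a finite-degree unbranched cover of a punctured disk by a plane domain of an entire function must have its puncture at a finite point. In (B), however, the boundedness step fails as written: from $z_n\to\infty$ in $\widetilde V$ with $f(z_n)\to w^*\in\partial V$ you cannot ``track one of the $d$ inverse branches near $w^*$'', because once a closed disk $\overline D$ about $w^*$ misses $S(f)$, every component of $f^{-1}(D)$ is a \emph{bounded} conformal copy of $D$, and the points $z_n$ simply run through infinitely many distinct such components; no asymptotic path, and hence no contradiction, results. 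A correct proof of boundedness uses the compactness of $V\cap S(f)$, which your (B) never invokes: choose a Jordan domain $V'$ with $V\cap S(f)\subset V'\Subset V$; properness makes $f^{-1}(\overline{V'})\cap\widetilde V$ compact and bounds by $d$ the degree over the annulus $A=V\setminus\overline{V'}$, so $\widetilde V\cap f^{-1}(A)$ consists of at most $d$ finite-degree annular covers of $A$ (the infinite-degree, universal-cover case being excluded by the degree bound), and each of these is bounded because $\overline A\cap S(f)=\emptyset$. The subsequent claim that $\partial\widetilde V$ is a single Jordan curve also needs more care than ``a finite covering of the circle'', since $V$ is only a Jordan domain and $D\cap V$ may be disconnected near $\partial V$; the annular covers above give the clean route. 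Finally, (C) is not ``immediate'': case (2) can occur with infinitely many critical points and no asymptotic curve at all (take $f=\sin$ and $V\supset[-1,1]$), and accessibility of $\infty$ then requires its own construction, for instance through an infinite discrete fibre or the critical points, which you do not provide.
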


We need to consider the pullbacks of Jordan disks attaching at the Siegel disk boundaries. Hence it is necessary to consider the preimages of the bounded Jordan domain whose boundary contains singular values.

\begin{lem}\label{lem:take-pre}
Let $f$ be a transcendental entire function, $V$ be a bounded Jordan disk, and $\widetilde{V}$ be a component of $f^{-1}(V)$. Suppose
$\dist_{\C}(S(f)\setminus\partial V,\partial V)>0$, $\overline{V}$ contains no asymptotic values and $\partial V$ contains at most finitely many critical values.
If $f: \widetilde{V} \rightarrow V$ is proper, then $\widetilde{V}$ is a bounded Jordan disk. In particular, if $\widetilde{V}$ contains at most finitely many critical points or $\sharp(V \cap S(f)) \leqslant 1$, then $f: \widetilde{V} \rightarrow V$ is proper and it is conformal if $V \cap S(f)=\emptyset$.
\end{lem}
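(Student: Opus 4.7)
The plan is to split the statement into three assertions, treated in turn.

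\emph{(i) If $f:\widetilde V\to V$ is proper, then $\widetilde V$ is a bounded Jordan disk.} Boundedness is a standard asymptotic-value argument: an unbounded $\widetilde V$ would admit a continuous proper arc $\gamma:[0,\infty)\to\widetilde V$ with $\gamma(t)\to\infty$, and properness would force $f\circ\gamma$ to accumulate on $\partial V$; a routine refinement extracts a sub-arc along which $f\circ\gamma$ converges to some $w\in\partial V$, yielding an asymptotic value in $\overline V$ and contradicting the hypothesis. For the Jordan-disk structure I approximate $V$ from inside by a nested sequence of Jordan subdisks $V_1\Subset V_2\Subset\cdots\Subset V$ with $\bigcup_n V_n=V$ and $\partial V_n\cap S(f)=\emptyset$; the latter is possible because $\dist_\C(S(f)\setminus\partial V,\partial V)>0$ confines $S(f)\cap V$ to a compact subset of $V$. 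Restricting the proper map $f:\widetilde V\to V$, each component of $f^{-1}(V_n)\cap\widetilde V$ is proper over $V_n$ and hence a bounded Jordan disk by Proposition~\ref{prop:take-pre}(B); a connectivity/monotonicity argument then gives that for $n$ large, $f^{-1}(V_n)\cap\widetilde V$ is a single bounded Jordan disk $\widetilde V_n$, with $\widetilde V=\bigcup_n\widetilde V_n$. Simple connectedness of $\widetilde V$ is immediate as an increasing union of simply connected planar domains, while the Jordan structure of $\partial\widetilde V$ follows from the local model $f\sim(z-c)^{e_c}$ at each critical point $c\in\partial\widetilde V$ with $f(c)\in\partial V$: $\widetilde V$ occupies exactly one of the $e_c$ sectors at $c$, so $\partial\widetilde V$ is a two-arc corner there, and elsewhere is a smooth lift of $\partial V$.

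\emph{(ii) Either extra hypothesis implies properness.} Proposition~\ref{prop:take-pre} leaves cases (1) or (2). In case (2), $\widetilde V$ contains either an asymptotic curve (giving an asymptotic value in $V\subset\overline V$, contradicting the no-asymptotic-values hypothesis) or infinitely many critical points, which is excluded directly under the finite-critical-point hypothesis, and under $\sharp(V\cap S(f))\leqslant 1$ by Proposition~\ref{prop:take-pre}(A) (applicable because $V\cap S(f)$ is compact by the distance hypothesis). Hence case (1) holds. \emph{(iii) Conformality.} If additionally $V\cap S(f)=\emptyset$, no critical point of $f$ can lie in $\widetilde V$, so $f:\widetilde V\to V$ is a proper unbranched covering of the simply connected $V$, forcing degree one and conformality.

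The main obstacle is the Jordan-curve structure of $\partial\widetilde V$ in the presence of critical values on $\partial V$. Although the local sector model is straightforward, assembling the local corners at each critical point on $\partial\widetilde V$ into a globally simple closed curve requires identifying a unique sector at each such point (ensured by the connectedness of $\widetilde V$ and the nesting of the $\widetilde V_n$) and verifying that the parameterized Jordan curves $\partial\widetilde V_n$ converge to this curve without self-identification in the limit.
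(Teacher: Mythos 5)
Your treatment of properness and conformality matches the paper's argument and is correct: case (2) of Proposition~\ref{prop:take-pre} is excluded exactly as you say, using that $\overline V$ contains no asymptotic values together with either the finiteness of critical points in $\widetilde V$ or Proposition~\ref{prop:take-pre}(A) (whose compactness hypothesis on $V\cap S(f)$ is indeed supplied by the distance assumption). The genuine gap is in your boundedness argument. Properness does give that $f\circ\gamma$ eventually leaves every compact subset of $V$, so its accumulation set is a compact connected subset of $\partial V$; but that set may be a nondegenerate subcontinuum of $\partial V$, and no ``sub-arc'' of $\gamma$ repairs this: restricting $\gamma$ to $[T,\infty)$ leaves the accumulation set unchanged, while passing to a sequence of times produces only a sequence, not an asymptotic path. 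Manufacturing a genuine asymptotic value from such a curve requires a different construction (following preimage components of small disks centred at points of $\partial V\setminus S(f)$, and showing the resulting path both tends to $\infty$ and has convergent image); this is precisely the nontrivial content underlying Proposition~\ref{prop:take-pre}(B)--(C) and is not routine. The paper avoids the issue by pulling back rather than pushing forward: it writes $\partial V\setminus\{v_1,\dots,v_m\}$ (the $v_i$ being the finitely many critical values on $\partial V$) as a union of open arcs $L_0$, each of whose $d$ preimage components in $\partial\widetilde V$ is mapped homeomorphically onto $L_0$; an unbounded such component would have image \emph{converging} to an endpoint $v_i\in\overline V$ (not merely accumulating on $\partial V$), and the no-asymptotic-value hypothesis, combined with the local branched-covering structure at preimages of $v_i$, gives a contradiction. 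You should replace your boundedness step with this argument.

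Your exhaustion of $V$ by Jordan subdisks $V_n\Subset V$ with $\partial V_n\cap S(f)=\emptyset$ is a legitimate alternative route to the Jordan structure (the paper instead establishes local connectivity of $\partial\widetilde V$ directly from the local branched-covering model at every boundary point and then applies the maximum principle). Two caveats: for the single-component claim you need $n$ large enough that $V_n$ contains all $d-1$ critical values of $f|_{\widetilde V}$, after which a Riemann--Hurwitz count forces $f^{-1}(V_n)\cap\widetilde V$ to be connected; and the whole construction presupposes that $\widetilde V$ is already known to be bounded, since otherwise the nested disks $\widetilde V_n$ may have diameters tending to infinity. So the boundedness gap above must be closed before this part of your argument can run.
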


% In the latter case, $f: \overline{\widetilde{V}} \rightarrow \overline{V}$ is a homeomorphism.

\begin{proof}
Since $\dist_{\C}(S(f)\setminus\partial V,\partial V)>0$ and $\partial V$ contains no asymptotic values, for each $z\in \partial V$ and $\widetilde{z}\in\partial \widetilde{V}$ satisfying $f(\widetilde{z})=z$, there exist open neighborhoods $W_z$ of $z$ and $\widetilde{W}_{\widetilde{z}}$ of $\widetilde{z}$ such that $f:\widetilde{W}_{\widetilde{z}}\to W_z$ is a branched covering map. This implies that $\partial \widetilde{V}$ is locally connected since $\partial V$ is a Jordan curve in $\C$.

Suppose $f: \widetilde{V} \rightarrow V$ is proper. Let $1\leqslant d<\infty$ be the degree of $f|_{\widetilde{V}}$.
If $\partial V$ contains no critical values, then $\widetilde{V}$ is a bounded Jordan disk by Proposition \ref{prop:take-pre}(B).
If $\partial V$ contains exactly $1\leqslant m<\infty$ critical values $v_1,\cdots, v_{m}$ (without counting multiplicity), then for each component $L_0$ of $\partial V\setminus\{v_1,\cdots, v_{m}\}$, $f^{-1}(L_0)\cap\partial \widetilde{V}$ consists of $d$ bounded Jordan arcs. Thus $\partial \widetilde{V}$ and hence $\widetilde{V}$ are bounded.
Since $\partial \widetilde{V}$ is locally connected and $V$ is a bounded Jordan disk, by the maximum principle, we conclude that $\widetilde{V}$ is also a bounded Jordan disk.

If $\sharp(V \cap S(f)) \leqslant 1$, by Proposition \ref{prop:take-pre}(A), $\widetilde{V}$ contains at most one critical point of $f$. Since $\overline{V}$ contains no asymptotic values, by Proposition \ref{prop:take-pre}(1) and (2), if $\widetilde{V}$ contains at most finitely many critical points or $\sharp(V \cap S(f)) \leqslant 1$, then $f: \widetilde{V} \rightarrow V$ is proper.
If $V \cap S(f)=\emptyset$, then the degree of $f: \widetilde{V} \rightarrow V$ is one and hence it is conformal and $f: \overline{\widetilde{V} }\rightarrow \overline{V}$ is a homeomorphism.
\end{proof}

Let $f: \C\to \C$ be a transcendental entire function.
For a domain $V_0$ in $\C$, every connected component of $f^{-1}(V_0)$ is called a \textit{pullback} of $V_0$. A sequence $\{V_n\}_{n\geqslant 0}$ is called \textit{a pullback sequence} of $V_0$ under $f$ if $V_{n+1}$ is a connected component of $f^{-1}(V_n)$ for all $n\geqslant 0$.
A pullback sequence $\{V_n\}_{n\geqslant 0}$ of $f$ is called \textit{semi-hyperbolic} if there exists $D_0\geqslant 1$ such that $f^{\circ n}: V_n\to V_0$ is a proper map of degree at most $D_0$ for all $n\geqslant 1$.
For transcendental entire functions, the terminology ``semi-hyperbolic" was firstly used in \cite{BM02} (see also \cite{RS11a} for the further study of such kind functions).
The following result is essentially contained in \cite[Theorem 1]{BM02}. But for convenience, we include a proof here by a slightly different argument.

\begin{lem}\label{lem:semi-hyperbolic}
Let $f$ be a transcendental entire function, and $U_0, V_0$ be two bounded Jordan disks and $D_0\geqslant 1$. Suppose $U_0$ is not contained in any Siegel disk of $f$ and $V_0\Subset U_0$.
Then for any $\varepsilon>0$, there exists $N>0$ such that for any semi-hyperbolic pullback sequence $\{U_n\}_{n\geqslant 0}$ of degree at most $D_0$, $\diam_{\EC}(V_n)<\varepsilon$ for all $n\geqslant N$, where $V_n$ is any connected component of $f^{-n}(V_0)$ contained in $U_n$.
\end{lem}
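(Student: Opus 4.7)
The plan is to argue by contradiction using a Riemann-map normalization and a normal family argument. Assume the conclusion fails; then there exist $\varepsilon>0$, indices $n_k\to\infty$, semi-hyperbolic pullback sequences of degree at most $D_0$ containing $U_k:=U_{n_k}$, and components $V_k\subset U_k$ of $f^{-n_k}(V_0)$ with $\diam_{\EC}(V_k)\geqslant\varepsilon$ for all $k$. Since $V_0\Subset U_0$, the quantity $m:=\Mod(U_0\setminus\overline{V_0})$ is positive, and the modulus inequality for proper maps of degree at most $D_0$ applied to $f^{\circ n_k}:U_k\to U_0$ shows that $U_k,V_k$ are Jordan disks with $V_k\Subset U_k$ and $\Mod(U_k\setminus\overline{V_k})\geqslant m/D_0$.

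Fix $p_0\in V_0$ and a preimage $p_k\in V_k$ of $p_0$ under $f^{\circ n_k}$, and let $\phi_k:\D\to U_k$ be the Riemann map with $\phi_k(0)=p_k$ and $\phi_k'(0)>0$. Then $h_k:=f^{\circ n_k}\circ\phi_k:\D\to U_0$ is proper of degree at most $D_0$ with $h_k(0)=p_0$. The family $\{h_k\}$ is normal because $U_0$ is bounded, and $\{\phi_k\}$ is normal in the spherical metric; after passing to a common subsequence, $h_k\to h$ and $\phi_k\to\phi_\infty$ locally uniformly on $\D$. Conformal invariance of modulus together with Gr\"otzsch's inequality then yields $r_0<1$, depending only on $m/D_0$, such that $\phi_k^{-1}(V_k)\subset r_0\D$ for every $k$.

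Both limits must be non-degenerate. If $h\equiv p_0$, then for any $r<1$ one has $h_k(r\D)\subset V_0$ for large $k$, so the connected set $r\D$ (which contains $0\in\phi_k^{-1}(V_k)$) lies in the component $\phi_k^{-1}(V_k)$ of $h_k^{-1}(V_0)$, contradicting $\phi_k^{-1}(V_k)\subset r_0\D$; hence $h$ is non-constant and, by Hurwitz, is a proper map $h:\D\to U_0$ of degree at most $D_0$. If $\phi_\infty$ were a spherical constant, then uniform convergence on the compact set $\overline{r_0\D}$ would give $\diam_{\EC}(V_k)\leqslant\diam_{\EC}(\phi_k(\overline{r_0\D}))\to 0$, again contradicting the assumption; hence $\phi_\infty:\D\to\EC$ is univalent.

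The dynamical contradiction now follows. Writing $f^{\circ n_k}=h_k\circ\phi_k^{-1}$ on $U_k$ and using the locally uniform convergence $\phi_k^{-1}\to\phi_\infty^{-1}$ and $h_k\to h$, the subsequence $\{f^{\circ n_k}\}$ converges locally uniformly on $E:=\phi_\infty(\D)\cap\C$ to the non-constant holomorphic function $g:=h\circ\phi_\infty^{-1}$. Since near any repelling periodic point of period $q$ in $J(f)$ the derivatives of $f^{\circ n_k}$ blow up along any sub-subsequence with constant residue modulo $q$, the open set $E$ cannot meet $J(f)$; hence $E\subset F(f)$, contained in a single Fatou component on which $\{f^{\circ n}\}$ admits a non-constant subsequential limit. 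By the classification of Fatou components of a transcendental entire function---attracting and parabolic basins as well as Baker domains yield only constant limits, and Herman rings do not exist---this component must be a Siegel disk $\Omega$, and the surjectivity of $h$ then forces $U_0=h(\D)\subset\overline{\Omega}=\Omega$ (using that $\partial\Omega$ has empty interior), contradicting the hypothesis that $U_0$ is not contained in any Siegel disk. The main obstacle I expect is ruling out a wandering domain as the target in the Fatou-component classification step; this should be handled by noting that the pre-wandering components produced by a bounded-degree pull-back sequence are pairwise disjoint, so their spherical diameters cannot all remain bounded below along a subsequence.
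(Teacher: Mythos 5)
Your overall strategy --- argue by contradiction, uniformize the domains $U_{n_k}$ by Riemann maps, extract a non-constant locally uniform limit of $f^{\circ n_k}$ on an open set, and then contradict the dynamics --- is the same as the paper's, and several of your steps (the modulus bound $\phi_k^{-1}(V_k)\subset r_0\D$, the argument that $h$ is non-constant, the exclusion of $J(f)$ from $E$ via repelling periodic points) are sound. However, the proof has genuine gaps. The first is at the very start: a family of univalent maps $\D\to\C$ need \emph{not} be normal with respect to the spherical metric (take $\phi_k(z)=kz$, whose spherical derivative at $0$ is $k\to\infty$; Marty's criterion fails), so you cannot extract $\phi_\infty$ from univalence alone. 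The paper gets normality by choosing the base point $\phi_k(0)$ inside $V_{n_k}\cap\D(0,R)$ --- possible precisely because $\diam_{\EC}(V_{n_k})\geqslant\varepsilon$ forces $V_{n_k}$ to meet a fixed bounded disk --- and by exhibiting a point $z_0$ omitted by every $U_{n_k}$ (an escaping point whose orbit avoids $U_0$, via Eremenko's theorem); Koebe's theorems then give local uniform boundedness. Your base point $p_k\in f^{-n_k}(p_0)$ has no reason to stay bounded, and you never produce an omitted point, so the compactness you rely on is unjustified.

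The second cluster of problems is in the endgame. Your exclusion of wandering domains rests on the claim that pairwise disjoint connected open sets cannot all have spherical diameter bounded below; this is false (nested round annuli $\{z: r_n<|z|<r_{n+1}\}$ with $r_n\uparrow 1$). The correct statement is that disjoint open sets cannot all contain a fixed disk, and that is what the standard argument uses: if the component containing $E$ were wandering, the non-constant limit $g$ would force a fixed disk inside $g(E)$ to lie in infinitely many pairwise disjoint forward components. Next, the component $\Omega\supset E$ could be a \emph{strictly preperiodic} preimage of a Siegel disk, on which non-constant subsequential limits also occur; you do not address this. Finally, even after reducing to a Siegel disk $\Omega$, your conclusion only gives $U_0\subset\overline{\Omega}$: the assertion ``$\overline{\Omega}=\Omega$'' is false, and $U_0\subset\overline{\Omega}$ with $U_0$ open does not yield $U_0\subset\Omega$ unless you rule out $U_0\cap\partial\Omega\neq\emptyset$, which is a real additional argument. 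The paper handles exactly this point by a case distinction: if $U_0\cap J(f)\neq\emptyset$ it enlarges $V_0$ to meet $J(f)$, observes that $\{f^{\circ n_k}\}$ cannot be normal on a domain $W\Subset E$ with $f^{\circ n_k}(W)\supset V_0$ (by density of repelling periodic points), and contradicts Montel's theorem since $f^{\circ n_k}(W)\subset U_0$ is bounded; only when $U_0\cap J(f)=\emptyset$ does it invoke the classification of Fatou components. You need an analogous argument --- for instance, noting that $f^{\circ n_k}(W)\supset V_0$ forces $W\cap J(f)\neq\emptyset$ whenever $V_0\cap J(f)\neq\emptyset$, contradicting your own conclusion that $E\cap J(f)=\emptyset$.
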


\begin{proof}
Suppose this lemma is false. Then there exist $\varepsilon_0>0$, a sequence $(n_k)_{k\geqslant 0}$ tending to $\infty$ and connected components $V_{n_k}$ in $U_{n_k}$ such that $\diam_{\EC}(V_{n_k})\geqslant \varepsilon_0$ for all $k$. Hence there exists $R>0$ such that $\D(0,R)\cap V_{n_k}\neq\emptyset$ for all $k$. Take $v_k\in \D(0,R)\cap V_{n_k}$. Since $U_{n_k}$ is simply connected, there exists a conformal map $\phi_k:\D\to U_{n_k}$ with $\phi_k(0)=v_k$.

By Koebe's distortion theorem, we conclude that $\{\phi_k\}_{k\geqslant 0}$ forms a normal family since $U_{n_k}$ does not contain a point $z_0$ for all $k$, where $f^{\circ n}(z_0)\to\infty$ as $n\to\infty$ and $f^{\circ n}(z_0)\not\in U_0$ for all $n\geqslant 0$ (the existence of such escaping point $z_0$ is guaranteed by \cite{Ere89}). Passing to subsequences if necessary, we assume that $\phi_k$ converges to $\phi:\D\to\C$ and $B_k:=f^{\circ n_k}\circ\phi_k:\D\to U_0$ converges to $B:\D\to \overline{U}_0$ locally uniformly as $k\to\infty$. Based on the fact that $B_k:\D\to U_0$ is a proper holomorphic map of degree at most $D_0$ with $B_k(0)\in V_0$, we deduce that $|\phi_k'(0)|$ has a uniformly lower bound away from $0$, which implies that $\phi$ is not a constant and hence univalent. Then $f^{\circ n_k}=B_k\circ \phi_k^{-1}$ converges locally uniformly to a non-constant holomorphic map $B\circ\phi^{-1}$ in $\phi(\D)$.

There exists a bounded Jordan disk $W\Subset\phi(\D)$ and a large integer $k_0\geqslant 1$ such that
\begin{equation}\label{equ:inclusion}
V_0\subset f^{\circ n_k}(W)\subset U_0 \text{\quad for all }k\geqslant k_0.
\end{equation}
If $U_0\cap J(f)\neq\emptyset$, then without loss of generality we assume that $V_0$ is large such that $V_0\cap J(f)\neq\emptyset$.
By \cite{Bak68}, $J(f)$ is the closure of the set of repelling periodic points of $f$. This implies that the sequence $\{f^{\circ n}\}_{n\geqslant 0}$ is normal at some point $z\in\C$ if and only if its some subsequence is (see \cite[Theorem 3.13, p.\,55]{HY98}).
Therefore, $\{f^{\circ n_k}\}_{k\geqslant k_0}$ is not normal in $W$ and $\bigcup_{k\geqslant k_0}f^{\circ n_k}(W)$ omits at most two points, which contradicts \eqref{equ:inclusion}.
If $U_0\cap J(f)=\emptyset$. Then $U_0$ is contained in a Fatou component $\widetilde{U}_0$ of $f$. By \eqref{equ:inclusion}, $\widetilde{U}_0$ is neither an attracting basin, a parabolic basin, a Baker domain nor a wandering domain. Note that we have assumed that $U_0$ is not contained in any Siegel disk. This contradiction implies the lemma.
\end{proof}

If $f$ is rational, Lemma \ref{lem:semi-hyperbolic} is refereed as the \textit{classical Shrinking Lemma} (replacing ``Siegel disk" by ``rotation domain", see \cite{Man93}, \cite{TY96} and \cite[p.\,86]{LM97}).

\subsection{Quasi-Blaschke models}\label{subsec:quasi-Blaschke}

In the rest of this section, we fix the transcendental entire function $f$ in Theorem \ref{thm:main-pre} which has a fixed bounded type Siegel disk $\Delta$ with quasi-circle boundary in $\C$.
According to \cite{GS03}, $\partial\Delta$ contains at least one critical point of $f$.
Instead of looking for an analytic Blaschke model of $f$, we construct a \textit{quasi-Blaschke model} $G$ of $f$ which is quasi-regular, such that the dynamics of $G$ is symmetric about the unit circle $\T$ and that $f$ is conjugate to $G$ outside the first preimage of $\Delta$. Then the weak expansion property of $f$ in Theorem \ref{thm:main-pre} is reduced to that of $G$.
We would like to mention that most settings in this subsection are strongly inspired by \cite{WYZZ25}.

Let
\begin{equation}\label{equ:qc-1}
\phi:\EC\setminus\overline{\Delta}\to\EC\setminus\overline{\D}
\end{equation}
be a conformal isomorphism fixing $\infty$.
For $z\in\EC$ (resp. $Z\subset\EC$), let $z^*=1/\overline{z}$ (resp. $Z^*=\{z^*: z\in Z\}$) be the symmetric image of $z$ (resp. $Z$) about the unit circle $\T$.
Let $\MP(f)$ the postsingular set of $f$.
We extend $\phi$ to a quasiconformal homeomorphism $\phi:\EC\to\EC$ as the following.
By the assumption $\dist_{\C}(\MP(f)\setminus\partial{\Delta},\partial{\Delta})>0$ in Theorem \ref{thm:main-pre}, there exist two bounded Jordan disks $U_0$ and $U_1$ such that \begin{equation}
\MP(f)\cap\Delta\subset U_0, \quad\overline{U}_0\subset\Delta,\quad \overline{U}_1\subset\C\setminus\overline{\Delta}\text{\quad and\quad} f(U_1)\subset U_0.
\end{equation}
Note that $\phi(U_1)$ is a Jordan disk in $\C\setminus\overline{\D}$.
Let $\phi:\EC\to\EC$ be a quasiconformal extension of \eqref{equ:qc-1} such that
\begin{equation}\label{equ:phi-int}
\phi(\overline{U}_0)\subset (\phi(U_1))^*\subset\D.
\end{equation}

We define a \textit{quasi-Blaschke model} corresponding to $f$:
\begin{equation}\label{equ:G}
G(z):=\left\{
\begin{array}{ll}
\phi \circ f \circ \phi^{-1}(z) & \text{\quad if } z\in\C\setminus\D,\\
\big(\phi \circ f \circ \phi^{-1}(z^*)\big)^* &\text{\quad if } z\in\D\setminus\{0\}.
\end{array}
\right.
\end{equation}
By the construction and the assumption $\dist_{\C}(\MP(f)\setminus\partial{\Delta},\partial{\Delta})>0$, the map $G$ has the following properties:
\begin{itemize}
\item[(i)] $G$ commutes with $z\mapsto z^*$, i.e., $G(z^*)=\big(G(z)\big)^*$;
\item[(ii)] $G:\C\setminus\D\to\C$ is conjugate to $f:\C\setminus\Delta\to\C$ by the quasiconformal mapping $\phi^{-1}:\EC\to\EC$;
\item[(iii)] $G:\C\setminus\{0\}\to\EC$ is a quasi-regular map and analytic in $\C\setminus (Q\cup Q^* \cup\{0\})$, where $Q=\phi\big(\overline{f^{-1}(\Delta)\setminus\Delta}\big)$; and
\item[(iv)] $\dist_{\C}(\widetilde{\MP}(G)\setminus\T,\T)>0$, where
\begin{equation}\label{equ:P-G-tilde}
\widetilde{\MP}(G):=\overline{\bigcup_{n\geqslant 0} G^{\circ n}\big(\phi(S(f))\cup \phi(S(f))^*\big)}.
\end{equation}
\end{itemize}
Indeed, (iv) holds since $S(f)\cap\Delta\subset U_0$ and by \eqref{equ:phi-int}, we have
\begin{equation}\label{equ:phi-inclusion}
\phi(U_0)\subset\phi(U_1)^* \overset{G}{\longrightarrow} (\phi\circ f(U_1))^*\subset \phi(U_0)^*.
\end{equation}
By the property (i), we have $G^{\circ 2}(\phi(U_0))\subset \phi(U_0)$.
The property (iv) guarantees that the domain in \eqref{equ:Omega-I} and the set in \eqref{equ:H-d-I} are well defined.
Denote
\begin{equation}\label{equ:P-G}
\MP(G):=\phi\big(\MP(f)\setminus\Delta\big).
\end{equation}
Then $\T\subset \MP(G)\subset\widetilde{\MP}(G)$ since $\partial\Delta\subset \MP(f)$.
Let $CV(f)$ be the set of all critical values of $f$. We denote
\begin{equation}\label{equ:crit-T}
\widetilde{\MCV}:=~\left\{G^{\circ n}(v)
\left|
\begin{array}{l}
v\in\phi(CV(f))\text{ and there exists a} \\
\text{minimal } n\geqslant 0 \text{ such that } G^{\circ n}(v)\in\T
\end{array}
\right.
\right\}.
\end{equation}
By \eqref{equ:phi-inclusion} and the assumptions in Theorem \ref{thm:main-pre}, the set $\widetilde{\MCV}\subset\T$ is finite.

%We add the singularities $\{\infty,0\}$ to $\widetilde{\MP}(G)$ and $\{\infty\}$ to $\MP(G)$ so that they become compact subsets of $\EC$.

\medskip
Let $V_0$ be a bounded Jordan disk in $\C\setminus\overline{\D}$. We call $\{V_n\}_{n\geqslant 0}$ a \textit{pullback sequence} of $V_0$ if $V_{n+1}$ is a connected component of $G^{-1}(V_n)$ in $\C\setminus\overline{\D}$  for all $n\geqslant 0$.
The following result is an immediate consequence of Lemma \ref{lem:take-pre}.

\begin{cor}\label{cor:take-pre}
Let $V_0$ be a bounded Jordan disk in $\C\setminus\overline{\D}$ with $\overline{V}_0 \cap \MP(G) \subset \T$. Then for any pullback sequence $\{V_n\}_{n\geqslant 0}$ in $\C\setminus\overline{\D}$, every $V_n$ is a bounded Jordan disk and $G^{\circ n}:V_n\to V_0$ is conformal.
\end{cor}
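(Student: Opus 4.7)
The plan is to transfer the pullback problem from $G$ to $f$ via the conjugacy $\phi$ and then apply Lemma \ref{lem:take-pre} inductively. Setting $V'_n := \phi^{-1}(V_n)$ and using that $\phi$ is conformal on $\EC \setminus \overline{\Delta}$ and conjugates $G$ to $f$ there, the sequence $\{V'_n\}$ becomes a pullback sequence of $V'_0 \subset \C \setminus \overline{\Delta}$ under $f$, so it suffices to prove each $V'_n$ is a bounded Jordan disk with $f^{\circ n}:V'_n\to V'_0$ conformal. The hypothesis $\overline{V}_0 \cap \MP(G) \subset \T$ translates to $\overline{V'_0} \cap \MP(f) \subset \partial\Delta$; since $V'_0$ is open and disjoint from $\overline{\Delta}$, this forces $V'_0 \cap \MP(f) = \emptyset$, and forward invariance of $\MP(f)$ propagates this to $V'_n \cap \MP(f) = \emptyset$, in particular $V'_n \cap S(f) = \emptyset$, for every $n \geq 0$.

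I proceed by induction on $n$. Assuming $V'_n$ is a bounded Jordan disk on which $f^{\circ n}$ acts conformally (so it extends to a homeomorphism $\overline{V'_n} \to \overline{V'_0}$), I would verify the three hypotheses of Lemma \ref{lem:take-pre} with $V = V'_n$. First, an asymptotic value on $\overline{V'_n}$ would lie on $\partial V'_n$ and its $n$-th iterate would land in $\overline{V'_0} \cap \MP(f) \subset \partial\Delta$, contradicting Theorem \ref{thm:main-pre}(2); so $\overline{V'_n}$ contains none. Second, a critical value on $\partial V'_n$ would have forward orbit meeting $\partial\Delta$ at step $n$, hence belongs to the finite set from Theorem \ref{thm:main-pre}(2); so $\partial V'_n$ carries only finitely many critical values. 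Third, Theorem \ref{thm:main-pre}(1) gives $\dist_\C(\MP(f)\setminus\partial\Delta,\partial\Delta) > 0$, so $S(f) \setminus \partial\Delta$ is closed and uniformly separated from $\partial\Delta$, while $S(f) \cap \partial\Delta$ is finite (no asymptotic values are allowed there, at most finitely many critical values, and no accumulation can come in from the uniformly separated part); combined with $V'_n \cap S(f) = \emptyset$, this yields $\dist_\C(S(f) \setminus \partial V'_n, \partial V'_n) > 0$.

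Since $V'_n \cap S(f) = \emptyset$, the final clause of Lemma \ref{lem:take-pre} implies $f:V'_{n+1} \to V'_n$ is proper of degree one and $V'_{n+1}$ is a bounded Jordan disk, closing the induction. Transporting back by $\phi$ then yields the statement of the corollary.

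The delicate point is the verification of the distance condition at each inductive level: a priori, singular values in $S(f) \setminus \partial\Delta$ could accumulate on $\partial V'_n$ through the pullback, and ruling this out relies on combining the uniform separation of $\MP(f)\setminus\partial\Delta$ from $\partial\Delta$ in Theorem \ref{thm:main-pre}(1) with the finiteness of singular orbits meeting $\partial\Delta$ from Theorem \ref{thm:main-pre}(2) to isolate all singular values in a neighborhood of $\partial V'_n$.
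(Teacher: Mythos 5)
Your proposal is correct and takes the same route the paper intends: the paper presents the corollary as an immediate consequence of Lemma \ref{lem:take-pre}, and your transfer via $\phi$ followed by induction with that lemma is exactly that argument. The only spot to tighten is the distance condition $\dist_\C\big(S(f)\setminus\partial V'_n,\partial V'_n\big)>0$, which (as your second paragraph is phrased) does not follow from the separation of $S(f)$ near $\partial\Delta$ alone; your closing remark names the correct mechanism, namely to take a putative sequence $s_k\in S(f)\setminus\partial V'_n$ accumulating at $p\in\partial V'_n\cap S(f)$, push it forward by $f^{\circ n}$ so that $f^{\circ n}(s_k)\to f^{\circ n}(p)\in\partial\Delta$, invoke hypothesis (1) of Theorem \ref{thm:main-pre} to force $f^{\circ n}(s_k)\in\partial\Delta$ for large $k$, and then hypothesis (2) to conclude the $s_k$ lie in a finite set, a contradiction.
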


The following result implies Theorem \ref{thm:main-pre} and the proof will be given in \S\ref{subsec:pf-thm-gene}.

\begin{prop}\label{prop:thm-pre}
For any $\varepsilon>0$ and any bounded Jordan disk $V_0\subset \C\setminus\overline{\D}$ satisfying $\overline{V}_0 \cap \MP(G) \neq \emptyset$ and $\overline{V}_0 \cap \MP(G) \subset \T$, there exists $N \geqslant 1$, such that for any pullback sequence $\{V_n\}_{n\geqslant 0}$ of $V_0$, we have $\diam_{\EC}(V_n)<\varepsilon$ for all $n\geqslant N$.
\end{prop}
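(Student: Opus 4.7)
The plan is to combine the univalence of pullbacks supplied by Corollary \ref{cor:take-pre} with the weak expansion of $G$ near $\T$ furnished by Lemma \ref{lem:back-1}, and the classical shrinking lemma for univalent pullbacks (Lemma \ref{lem:semi-hyperbolic}), following the strategy of \cite{WYZZ25}. The overall idea is a dichotomy: along the backward orbit, a pullback $V_n$ either stays inside a fixed one-sided neighborhood of $\T$, in which case the weak expansion lemma kicks in, or it drifts away, in which case univalence together with the shrinking lemma delivers uniform spherical contraction.

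Concretely, I would first slightly thicken $V_0$ to a bounded Jordan disk $U_0\subset\C\setminus\overline{\D}$ with $V_0\Subset U_0$ and $\overline{U}_0\cap\widetilde{\MP}(G)\subset\T$; this is possible because property (iv) of $G$ gives $\dist_\C(\widetilde{\MP}(G)\setminus\T,\T)>0$ while $\overline{V}_0\cap\MP(G)\subset\T$. Corollary \ref{cor:take-pre} then ensures that for every pullback sequence each $U_n$ is a bounded Jordan disk and $G^{\circ n}:U_n\to U_0$ is conformal, with the corresponding pullback $V_n$ of $V_0$ satisfying $V_n\Subset U_n$. Koebe distortion controls the spherical diameter of $V_n$ by that of the annular collar $U_n\setminus\overline{V}_n$, whose modulus equals the conformally invariant quantity $\Mod(U_0\setminus\overline{V}_0)$. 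Fixing the half hyperbolic neighborhood $\Omega$ of $\T$ on the exterior side as in \eqref{equ:Omega-I}, I would then split into two cases. If $V_n\subset\Omega$, Lemma \ref{lem:back-1} implies that for large $n$ the derivative of $G^{\circ n}$ is uniformly large on $V_n$, so by univalence and the above Koebe reduction $\diam_\EC(V_n)<\varepsilon$. If $V_n\not\subset\Omega$, a definite part of $V_n$ stays bounded away from $\T$, and one transports the problem to the transcendental map $f$ via the conjugacy $\phi^{-1}$ (valid on $\C\setminus\overline{\D}$ by property (ii) of $G$) and applies Lemma \ref{lem:semi-hyperbolic} to the corresponding semi-hyperbolic pullback sequence of $\phi^{-1}(U_0)$ under $f$; the uniform degree bound comes from assumption (iii) of Theorem \ref{thm:main-pre}.

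The main obstacle is the near-$\T$ regime. Since $\partial\Delta$ is a bounded type quasi-circle on which $f$ conjugates to an irrational rotation, $G$ has no ordinary one-step expansion near $\T$, and no classical shrinking argument works there; the ``long iteration'' weak expansion supplied by Lemma \ref{lem:back-1} of \cite{WYZZ25} is the essential substitute. Applying it requires verifying that the quasi-Blaschke model $G$ constructed in \S\ref{subsec:quasi-Blaschke} matches the hypotheses of that lemma, which comes down to the finiteness of $\widetilde{\MCV}$ (assumption (ii) of Theorem \ref{thm:main-pre}) and the uniform upper bound on local degrees of critical points landing on $\T$ (assumption (iii)). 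Once those verifications are in place, choosing $N$ large enough to cover both regimes simultaneously---with uniformity over all pullback sequences coming from the uniformity of the constants in Lemmas \ref{lem:back-1} and \ref{lem:semi-hyperbolic}---completes the proof.
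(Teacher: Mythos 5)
Your overall dichotomy (near $\T$: weak expansion; away from $\T$: univalence plus the shrinking lemma) has the right flavor and matches the strategy of the paper, but the proposal has concrete gaps. First, the thickening step is impossible as stated: since $\emptyset\neq\overline{V}_0\cap\MP(G)\subset\T$ and $\T\subset\MP(G)$, the closure $\overline{V}_0$ meets $\T$, so there is no bounded Jordan disk $U_0\subset\C\setminus\overline{\D}$ with $V_0\Subset U_0$. Any genuine enlargement must cross $\T$, where $\MP(G)$ lives and where critical points of $G$ sit; this is exactly why the paper needs Lemma \ref{lem:take-pre-2}, whose $U_0$ satisfies $\sharp(\partial U_0\cap\T)=2$ and which only controls the degree of the enlarged pullbacks when the arc $\overline{V}_0\cap\T$ is short enough that $\overline{U}_0$ meets $\widehat{\MP}(G)$ in at most one point. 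Second, you invoke Lemma \ref{lem:back-1} as if it gave uniform expansion of $G^{\circ n}$ on any $V_n$ lying near $\T$; in fact it is a statement about pullbacks of a half hyperbolic neighborhood $H_d(I_0)$ of a \emph{short} arc, $|I_0|<r(d,\varepsilon)$, and it says nothing until the sets in your backward orbit have been recast in that form.

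The decisive missing idea is how one passes from an arbitrary initial $V_0$ (touching a possibly long arc of $\T$) into the regime where Lemma \ref{lem:back-1} applies, uniformly over all pullback sequences. The paper's proof does this by an induction (the ``jumping off'' steps): at each stage the components of $G^{-n}(V_0)$ split into finitely many touching $\T$ and infinitely many not; the latter are enlarged via Lemma \ref{lem:take-pre-2} \emph{at the moment they leave} $\T$, which supplies the uniform degree bound needed to apply Lemma \ref{lem:semi-hyperbolic}, while for the former one tracks the arc lengths $|\overline{V}_n\cap\T|$. Because $G|_\T$ is conjugate to an irrational rotation and $\T$ carries a critical point, there is a uniform $n_0$ such that $|\overline{V}_{n_0}\cap\T|<r$ for every pullback sequence; only then does Lemma \ref{lem:back-1} take over, and only then can one extract a single $N$ valid for all sequences. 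Without this arc-length argument your proof has no mechanism for termination of the case analysis or for the uniformity of $N$.
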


For any subarc $I\subsetneq\T$, we use $|I|\in(0,\pi)$ to denote the Euclidean length of $|I|$.
There exists $r'\in(0,\pi/4)$ such that if $|I|<r'$, then
\begin{equation}\label{equ:r-pri}
\sharp\big(\overline{I}\cap G^{\circ n}(\widetilde{\MCV})\big)\leqslant  1 \text{ for all } n\geqslant 0.
\end{equation}
Denote the finite set
\begin{equation}\label{equ:P-G-hat}
\widehat{\MP}(G):=\widetilde{\MCV}\cup\big\{z\in\MP(G)\setminus\T:\exists\, n\geqslant 1 \text{ such that } G^{\circ n}(z)\in\T\big\}.
\end{equation}

\begin{lem}\label{lem:take-pre-2}
Let $V_0\subset\C\setminus\overline{\D}$ be a bounded Jordan disk such that $\overline{V}_0\cap\MP(G)\neq\emptyset$ is a subarc $I\subset\T$ with $|I|<r'$.
Then there exists a bounded Jordan disk $U_0$ containing $\overline{V}_0$ such that
\begin{enumerate}
\item $\sharp(\partial U_0\cap\T)=2$, $U_0\subset\C\setminus(\MP(G)\setminus\T) $ and $(\overline{U}_0\setminus\overline{V}_0)\cap \widehat{\MP}(G)=\emptyset$;
\item for any component $V_1$ of $G^{-1}(V_0)$ in $\C\setminus\overline{\D}$ with $\overline{V}_1\cap\T=\emptyset$, the component $U_1$ of $G^{-1}(U_0)$ containing $V_1$ satisfies $\overline{U}_1\cap\overline{\D}=\emptyset$, $\sharp(\overline{U}_1\cap\MP(G))\leqslant 1$, $(\overline{U}_1\setminus\overline{V}_1)\cap \MP(G)=\emptyset$ and $U_1$ is a bounded Jordan disk.
\end{enumerate}
\end{lem}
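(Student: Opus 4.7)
The plan is to construct $U_0$ as a small controlled thickening of $V_0$ across the arc $I$, and then to show, by exploiting the fact that $G|_\T$ is a homeomorphism, that the component $U_1$ containing any $V_1$ with $\overline{V}_1\cap\T=\emptyset$ is necessarily non-$\T$-crossing and bounded away from $\overline{\D}$.

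First I would construct $U_0$. By property~(iv) in \S\ref{subsec:quasi-Blaschke} there is $\eta>0$ with $\dist_\C(\widetilde{\MP}(G)\setminus\T,\T)\geqslant\eta$, and hence also $\dist_\C(\MP(G)\setminus\T,\T)\geqslant\eta$. Combined with \eqref{equ:r-pri} and the finite-critical-value assumption of Theorem~\ref{thm:main-pre}(2), the set $\widehat{\MP}(G)$ is finite. I would choose $\delta\in(0,\eta)$ smaller than $\dist_\C(\overline{V}_0,\widehat{\MP}(G)\setminus\overline{V}_0)$ and take $U_0$ to be a bounded Jordan disk obtained by pushing $\partial V_0\setminus I$ outward into $\C\setminus\overline{\D}$ by at most $\delta$ and adding a small cap across $\T$ covering $I$, arranging that $\partial U_0$ meets $\T$ transversally at exactly two points $a',b'$ just outside the endpoints of $I$. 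All three conditions of~(1) then follow. Next, Corollary~\ref{cor:take-pre} shows that $V_1$ is a bounded Jordan disk and $G:V_1\to V_0$ is conformal. Its boundary decomposes as $\partial V_1=\tilde I\cup\tilde\gamma$, with $\tilde I=G^{-1}(I)\cap\partial V_1$ and $\tilde\gamma$ the complementary arc. Because $G^{-1}(\T)=\T\cup\partial Q\cup\partial Q^*$ and $\overline{V}_1$ meets neither $\T$ (by assumption) nor $\partial Q^*\subset\D$ (as $\overline{V}_1\subset\C\setminus\overline{\D}$), the arc $\tilde I$ must lie in $\partial Q$, so $V_1\subset(\C\setminus\overline{\D})\setminus Q$.

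The decisive step is to prove $\overline{U}_1\cap\overline{\D}=\emptyset$. Here I would exploit the fact that $G|_\T:\T\to\T$ is a homeomorphism, being conjugate via $\phi$ to the irrational rotation $f|_{\partial\Delta}$. Hence $(G|_\T)^{-1}(J)$, with $J:=U_0\cap\T$, is a single subarc $(\alpha',\beta')\subset\T$ where $\alpha',\beta'$ are the unique $\T$-preimages of $a',b'$; accordingly $G^{-1}(U_0)$ has exactly one component $W^*$ intersecting $\T$, namely the one containing $(\alpha',\beta')$. Because $a',b'$ lie in $\overline{U}_0\setminus\overline{V}_0$ and hence avoid $\widehat{\MP}(G)\supset\widetilde{\MCV}$, the points $\alpha',\beta'$ are non-critical for $G$, so $G$ is a local biholomorphism at each; therefore only $W^*$ appears locally at these two points. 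Since $\overline{V}_1\cap\T=\emptyset$ prevents $V_1$ from lying in $W^*$, the component $U_1\neq W^*$ must be non-$\T$-crossing, and any $\T$-boundary point of $U_1$ would have to belong to $G^{-1}(\partial U_0)\cap\T=\{\alpha',\beta'\}$ — which we have just ruled out. A local extension analysis — $U_1$ extends from $V_1$ across $\tilde I$ into $Q\subset\C\setminus\overline{\D}$ and across $\tilde\gamma$ into the collar preimage inside $(\C\setminus\overline{\D})\setminus Q$, and any merging at critical points of $G$ inside $\overline{V}_1$ can only occur on $\partial Q$, since critical points on $\T$ are excluded by $\overline{V}_1\cap\T=\emptyset$ and those on $\partial Q^*\subset\D$ by $\overline{V}_1\subset\C\setminus\overline{\D}$ — then yields $U_1\subset\C\setminus\overline{\D}$ with $\overline{U}_1\cap\T=\emptyset$, hence $\overline{U}_1\cap\overline{\D}=\emptyset$.

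The remaining properties follow via the conjugation by $\phi$ and Lemma~\ref{lem:take-pre}. Since $\sharp(\phi^{-1}(\overline{U}_0)\cap S(f))\leqslant 1$ (at most one critical value on $\overline{I}$ by \eqref{equ:r-pri}, no asymptotic values in $\overline{U}_0$ by Theorem~\ref{thm:main-pre}(2), and $\overline{U}_0$ avoids $\MP(G)\setminus\T$ by construction), Lemma~\ref{lem:take-pre} applied to $f:\phi^{-1}(U_1)\to\phi^{-1}(U_0)$ shows that $\phi^{-1}(U_1)$, and hence $U_1$, is a bounded Jordan disk. For the $\MP(G)$-bounds, any $z\in\overline{U}_1\cap\MP(G)$ satisfies $G(z)\in\overline{U}_0\cap G(\MP(G))$; since $\overline{U}_0\cap(\MP(G)\setminus\T)=\emptyset$ combined with $\overline{U}_1\cap\T=\emptyset$ forbid iterates landing on $\T$, such a $z$ must be a critical pullback of the unique critical value in $\overline{I}$, lying on $\tilde I\subset\overline{V}_1$, which yields simultaneously $\sharp(\overline{U}_1\cap\MP(G))\leqslant 1$ and $(\overline{U}_1\setminus\overline{V}_1)\cap\MP(G)=\emptyset$. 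The main obstacle I foresee is the topological argument in paragraph three, specifically ruling out tentacles of $U_1$ approaching $\T$; this is exactly where the homeomorphism property of $G|_\T$, and hence the bounded-type rotation hypothesis on $\theta$, is essential.
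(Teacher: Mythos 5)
Your overall architecture (thicken $V_0$ to $U_0$, then control the component $U_1$) matches the paper, but your key mechanism is different: you isolate a \emph{unique} component $W^*$ of $G^{-1}(U_0)$ meeting $\T$ via the fact that $G|_\T$ is a homeomorphism, whereas the paper works directly inside the proper map $G\colon U_1\to U_0$ of degree $d$ and shows that all $2d$ preimage arcs of $J\setminus\{v\}$ in $U_1$ have closures disjoint from $\T$, because an off-$\T$ preimage arc of $\T$ whose closure touches $\T$ would produce a critical point on $\T$, hence a critical value in $\widetilde{\MCV}\cap\overline{U}_0$ in addition to $v$, contradicting $\sharp(\overline{U}_0\cap\widehat{\MP}(G))\leqslant 1$.

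The decisive step of your argument, however, is asserted rather than proved: ``$\overline{V}_1\cap\T=\emptyset$ prevents $V_1$ from lying in $W^*$.'' Everything downstream rests on $U_1\neq W^*$, and this claim is essentially the content of the lemma, so assuming it is circular. Concretely, $G\colon W^*\to U_0$ is proper of some degree $d^*$, and $d^*\geqslant 2$ exactly in the main case of interest, namely when $\overline{I}$ contains the critical value $v$ of the critical point of $G$ on $\T$ (which happens for at least one of the arcs used to cover $\partial\Delta$ in the applications): that critical point lies on $(G|_\T)^{-1}(J)\subset W^*$. Then $G^{-1}(V_0)\cap W^*$ has several components; besides the one attached to $\tilde I\subset\T$, there may be components attached to preimage arcs of $I$ lying on $\partial Q$, and such an arc can a priori be a crosscut of $W^*$ staying away from $\T$ (its endpoints need only map to the endpoints $a',b'$ of $J$). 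Ruling out that one of these components is your $V_1$ is precisely the missing argument; your closing ``local extension analysis'' sentence gestures at it but does not carry it out, and the obstacle you flag (tentacles of $U_1$ approaching $\T$) is adjacent to, but not the same as, this gap ($V_1$ sitting inside $W^*$ in the first place). The paper closes the hole with the arc-counting/critical-point argument described above. A minor additional point: the injectivity of $G|_\T$ follows from $f|_{\partial\Delta}$ being conjugate to a rotation on a Jordan curve, not specifically from the bounded-type hypothesis, so that hypothesis is not ``where the argument is essential'' in the way you suggest.
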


\begin{proof}
Since $\widehat{\MP}(G)$ is a finite set and $\dist_{\C}(\widetilde{\MP}(G)\setminus\T,\T)>0$, the existence of a bounded Jordan disk $U_0$ satisfying (a) is immediate. For any such $U_0$, we only need to verify the property (b).
By \eqref{equ:r-pri} and (a), we have
\begin{equation}\label{equ:PG-inclusion}
\overline{U}_0\cap\widehat{\MP}(G)\subset\overline{V}_0\cap\widehat{\MP}(G)\subset\overline{I}\cap\widehat{\MP}(G)
\end{equation}
and $\sharp(\overline{I}\cap\widehat{\MP}(G))\leqslant 1$.
In particular, $U_0$ contains at most one critical value and no asymptotic values. By Proposition \ref{prop:take-pre}, $G:U_1\to U_0$ is a proper map and have finite degree $d\geqslant 1$ and $U_1$ is a bounded Jordan disk.

Without loss of generality, we assume $\sharp(\overline{I}\cap\widehat{\MP}(G))=1$ since the case $\overline{I}\cap\widehat{\MP}(G)=\emptyset$ is completely similar and easier.
Denote $\{v\}:=\overline{I}\cap\widehat{\MP}(G)$.
Since $\sharp(\partial U_0\cap\T)=2$, it implies that $J:=U_0\cap\T$ is an open arc containing $I$. Then $G^{-1}(J\setminus\{v\})\cap U_1$ consists of $2d$ open Jordan arcs $J_1$, $\cdots$, $J_{2d}$. We claim that $\overline{J}_i\cap\T=\emptyset$ for all $1\leqslant i\leqslant 2d$. Indeed, if $\overline{J}_i\cap\T\neq\emptyset$ for some $i$, then $\overline{J}_i\cap\T$ contains a critical point and we would have $\sharp(\overline{U}_0\cap\widehat{\MP}(G))\geqslant 2$, which is a contradiction. This implies that $\overline{U}_1\cap\overline{\D}=\emptyset$ and thus we have $\sharp(\overline{U}_1\cap\MP(G))\leqslant 1$ and $(\overline{U}_1\setminus\overline{V}_1)\cap \MP(G)=\emptyset$.
\end{proof}

Let $\widetilde{\MP}(G)$ be defined in \eqref{equ:P-G-tilde}. There exists $r''>1$ such that $(\widetilde{\MP}(G)\setminus\T)\cap\{z\in\C:1/r''\leqslant |z|\leqslant  r''\}=\emptyset$.
Let $I\subsetneq\T$ be an open subarc. Then $\widetilde{\MP}(G) \setminus I$ is a closed subset of $\C$. We denote
\begin{equation}\label{equ:Omega-I}
\Omega_I:= \text{The component of } \C\setminus (\widetilde{\MP}(G) \setminus I) \text{ containing } I.
\end{equation}
Then $\Omega_I$ is a domain which is symmetric about $\T$.
For any given $d>0$, let
\begin{equation}\label{equ:H-d-I}
H_d(I) := \big\{z \in \Omega_I \,:\, \dist_{\Omega_I}(z , I) < d \text{~and~} |z| > 1 \big\}
\end{equation}
be a \textit{half hyperbolic neighborhood} of $I$ in $\C\setminus\overline{\D}$.
See Figure \ref{Fig_half-nbd}.

\begin{figure}[!htpb]
  \setlength{\unitlength}{1mm}
  \centering
  \includegraphics[width=0.7\textwidth]{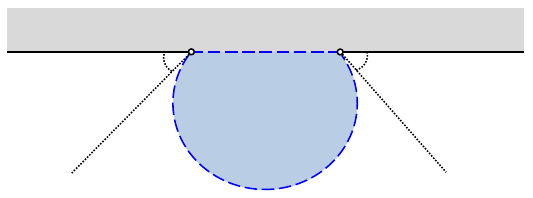}
  \put(-88,30){$\D$}
  \put(-51,29){$I$}
  \put(-73,23){$\beta$}
  \put(-30,23){$\beta$}
  \put(-55,14){$H_d(I)$}
  \caption{A half hyperbolic neighborhood $H_d(I)$ of the open arc $I$. }
  \label{Fig_half-nbd}
\end{figure}

Let $I\subsetneq\T$ be an open subarc and $d>0$. Consider $\widetilde{\Omega}_I:=\EC\setminus(\T\setminus I)$ and
\begin{equation}
\widetilde{H}_d(I): = \big\{z \in \widetilde{\Omega}_I \,:\, \dist_{\widetilde{\Omega}_I}(z , I) < d\big\}.
\end{equation}
It is well known that the boundary $\partial\widetilde{H}_d(I)$ consists of two subarcs of Euclidean circles which are symmetric to $\T$ and form an angle $\beta\in(0,\pi)$
between these circles and $\T\setminus I$, where $\log\cot\big(\tfrac{\beta}{4}\big)=d$. See \cite[Chap.\,VI, \S 5]{MS93}.
Actually, the angle between $\partial H_d(I)\setminus I$ and $\T\setminus I$ is well-defined and also equal to $\beta$. See \cite[\S 2.3]{WYZZ25}.

It is proved in \cite[Lemmas 2.6 and 2.7(1)]{WYZZ25} that for any $\tilde{d}>0$, there exists a small number $r_1=r_1(\tilde{d})>0$ such that if $|I|<r_1$, then the half hyperbolic neighborhood $H_{d}(I)$ is a bounded Jordan disk in $\C\setminus\overline{\D}$ for any $0<d\leqslant \tilde{d}+1$ and
\begin{equation}\label{equ:shape-H-d-I}
H_d(I)\subset \widetilde{H}_d(I)\setminus\overline{\D}\subset H_{d+1}(I).
\end{equation}

\begin{lem}\label{lem:back-1}
There exists a number $d_0>0$ such that for any $d\geqslant d_0$ and $\varepsilon>0$, there exists $r=r(d,\varepsilon)\in(0,r_1]$ with $r_1=r_1(d)>0$ such that if $V_0=H_d(I_0)$ with $|I_0|< r$, then for any pullback sequence $\{V_n\}_{n\geqslant 0}$ of $V_0$, we have $\diam_{\EC} (V_n) < \varepsilon$ for all $n \geqslant 0$.
\end{lem}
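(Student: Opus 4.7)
The plan is to adapt the proof of \cite[Lemma 4.5]{WYZZ25} to the quasi-Blaschke model $G$ associated to our transcendental entire function $f$, analyzing the pullback sequence $\{V_n\}_{n\geqslant 0}$ according to whether $\overline{V}_n$ touches the unit circle $\T$. Two complementary contraction mechanisms will be exploited: the rigidity of $G|_\T$ from the bounded-type rotation for pullbacks touching $\T$, and the classical Shrinking Lemma for pullbacks away from $\T$.

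First I would choose $d_0 > 0$ large enough so that for any $d \geqslant d_0$ and any sufficiently short arc $I \subsetneq \T$, the half hyperbolic neighborhood $H_d(I)$ is a Jordan disk confined to a thin symmetric lens near $I$, sandwiched by \eqref{equ:shape-H-d-I} as $H_d(I) \subset \widetilde{H}_d(I) \setminus \overline{\D} \subset H_{d+1}(I)$, with small endpoint angle $\beta_d = 4\arctan(e^{-d})$. Consequently, for any $\varepsilon > 0$ and $r$ small enough, $\diam_{\EC}(H_{d+1}(I)) < \varepsilon$ whenever $|I| < r$.

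For pullbacks $V_n$ with $\overline{V}_n \cap \T \neq \emptyset$, the reflection symmetry $G(z^*) = G(z)^*$ (property (i) of $G$) together with the forward invariance of $\widetilde{\MP}(G)$ ensures that $V_n \subset H_{d_n}(I_n)$ for some subarc $I_n \subset \T$ with $G(I_n) \subset I_{n-1}$ and some $d_n > 0$. The key claim, essentially \cite[Lemma 4.5]{WYZZ25}, is that $d_n \leqslant d + 1$ whenever $|I_0|$ is sufficiently small. This rests on: (i) Schwarz--Pick contraction of $G$ as a holomorphic map between the hyperbolic domains $\Omega_{I_n}$ and $\Omega_{I_{n-1}}$, off the critical set; (ii) uniform control of angle distortion at critical points on $\T$, coming from assumptions (2) and (3) of Theorem \ref{thm:main-pre}; and (iii) the bounded-type condition on $\theta$, which makes $G|_\T$ quasi-symmetrically conjugate to the rigid rotation $R_\theta$, so the arc lengths $|I_n|$ remain bounded by a uniform multiple of $|I_0|$. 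Together with the previous paragraph, this yields $\diam_{\EC}(V_n) \leqslant \diam_{\EC}(H_{d+1}(I_n)) < \varepsilon$.

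For pullbacks $V_n$ with $\overline{V}_n \cap \T = \emptyset$, I would choose a bounded Jordan disk $U_0 \supsetneq V_0$ as in Lemma \ref{lem:take-pre-2}(a). Iterating part (b) of the same lemma yields a chain of bounded Jordan disks $U_n \supset V_n$ with $\overline{U}_n \cap \overline{\D} = \emptyset$ and each $G^{\circ k} \colon U_{n+k} \to U_n$ proper of uniformly bounded degree. Transferring via the conjugacy $\phi^{-1}$ (property (ii) of $G$) and applying Lemma \ref{lem:semi-hyperbolic} gives $\diam_{\EC}(V_n) < \varepsilon$ for all $n \geqslant N$, uniformly in the pullback sequence; for the finitely many early iterates $n < N$, smallness follows from Schwarz--Pick (which gives $\diam_{U_n}(V_n) \leqslant \diam_{U_0}(V_0) \to 0$ as $r \to 0$) combined with a compactness argument in the spherical metric to control those pullback components sitting in compact subsets of $\C$ while the remaining components cluster toward $\infty$ and thus have small spherical diameter automatically. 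The main obstacle is the uniform bound $d_n \leqslant d + 1$ in the first case: a pullback across a critical point on $\T$ of local degree $k$ can a priori multiply the endpoint angle by $k$, and one must show that this potential expansion is compensated by Schwarz--Pick contraction together with the quasi-symmetric rigidity of the bounded-type rotation, which is precisely where assumptions (2) and (3) of Theorem \ref{thm:main-pre} are used.
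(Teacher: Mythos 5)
Your outline matches the paper's approach: the paper proves this lemma essentially by citation, observing that \cite[Lemma 4.5]{WYZZ25} carries over verbatim once one verifies the single property that pullbacks of small Jordan disks around points of $\T$ have uniformly bounded degree, which follows from Lemma \ref{lem:take-pre-2} and is exactly the ingredient you invoke via assumptions (2) and (3) of Theorem \ref{thm:main-pre}. Your sketch of the internal mechanism of \cite{WYZZ25} (uniform control of the hyperbolic height $d_n$ and of the arc lengths $|I_n|$ for pullbacks meeting $\T$, and bounded degree plus the shrinking lemma for detached pullbacks) is consistent with that source, and, like the paper, you defer the hard uniform bound $d_n\leqslant d+1$ to \cite{WYZZ25} rather than reproving it.
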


Note that $V_0$ in Lemma \ref{lem:back-1} is a bounded Jordan disk by the definition of $r_1$ and hence each $V_n$ is a bounded Jordan disk by Corollary \ref{cor:take-pre}.
If we replace the rational maps in \cite[Main Lemma]{WYZZ25} by the functions in Theorem \ref{thm:main-pre}, then Lemma \ref{lem:back-1} can be obtained completely the same as \cite[Lemma 4.5]{WYZZ25}.
Indeed, the number $d_0>0$ is determined after \cite[Proposition 3.7]{WYZZ25} and the only place where we need to be careful about in the proof of \cite[Lemma 4.5]{WYZZ25} is the following property: there exists $D_0\geqslant 2$ such that for every $z\in\T$, there exists a small Jordan disk $W_0$ containing $z$, such that for any sequence $\{W_n\}_{n\geqslant 1}$ of pullbacks of $W_0$ in $\EC\setminus\overline{\D}$, we have $\deg(G^{\circ n}: W_n\to W_0)\leqslant  D_0$.
This property for the functions in Theorem \ref{thm:main-pre} (and hence for $G$ in \eqref{equ:G}) is guaranteed by the conditions on the critical orbits (see Lemma \ref{lem:take-pre-2}). The rest part of the proof can be followed essentially verbatim.

\subsection{Proof of Theorem \ref{thm:main-pre}}\label{subsec:pf-thm-gene}

As we have mentioned in the previous subsection, to prove Theorem \ref{thm:main-pre}, it is sufficient to prove Proposition \ref{prop:thm-pre}.
The argument is different from the rational case in \cite[\S 4.3]{WYZZ25} since the preimage of a Jordan disk under any rational map has only at most finitely many components while transcendental entire functions may have infinitely many.

\begin{proof}[Proof of Proposition \ref{prop:thm-pre}]
We divide the proof into several steps:

\medskip
\textit{Step 1. First reduction of $V_0$}.
By the assumption $\dist_{\C}(\MP(f)\setminus\partial{\Delta},\partial{\Delta})>0$ in Theorem \ref{thm:main-pre}, since $V_0$ is a bounded Jordan disk in $\C\setminus\overline{\D}$ satisfying $\emptyset\neq\overline{V}_0 \cap \MP(G) \subset \T$, it follows that $V_0$ is contained in another bounded Jordan disk $V_0'$ in $\C\setminus\overline{\D}$ such that $\overline{V_0'} \cap \MP(G)$ is the union of finitely many disjoint subarcs on $\T$ which are not singletons. Without loss of generality, we assume that $V_0=V_0'$.

\medskip
\textit{Step 2. Second reduction of $V_0$}. We fix $d\geqslant d_0$, where $d_0>0$ is the number in Lemma \ref{lem:back-1}. Let $r'>0$ be introduced in \eqref{equ:r-pri}.
By the definition of $r_1=r_1(d)>0$ and the characterization of the shape of half hyperbolic neighborhoods in \eqref{equ:shape-H-d-I}, the Jordan disk $V_0$ is contained in another bounded Jordan disk $\widetilde{V}_0\subset\C\setminus\overline{\D}$, which is the union of finitely many small Jordan disks $\{H_d(I_j):1\leqslant j\leqslant M_1\}$ and $\{W_k:1\leqslant k\leqslant M_2\}$, where $|I_j|<r_0:=\min\{r_1,r'\}$ for each $j$ and $\overline{W}_k\cap \MP(G)=\emptyset$ for each $k$. By Lemma \ref{lem:take-pre}, for any pullback sequence $\{\widehat{V}_n\}_{n\geqslant 0}$ of $\widehat{V}_0=H_d(I_j)$ or $W_k$ or $\widetilde{V}_0$, the map $G^{\circ n}:\widehat{V}_n\to \widehat{V}_0$ is conformal. By Lemma \ref{lem:semi-hyperbolic}, it suffices to prove Proposition \ref{prop:thm-pre} for $V_0:=H_d(I)$ when $|I|<r_0$.

\medskip
\textit{Step 3. First jumping off}.
Let $V_0=H_d(I)$ with $|I|<r_0$. Without loss of generality, we assume that $|I|\geqslant r$ since otherwise the proposition follows immediately by Lemma \ref{lem:back-1}.
For any pullback sequence $\{V_n\}_{n\geqslant 0}$ of $V_0$, by Lemma \ref{lem:take-pre}, each $V_n$ is a bounded Jordan disk and the map $G: \overline{V}_{n+1}\to \overline{V}_n$ is a homeomorphism. Let $\widehat{\MP}(G)$ be defined in \eqref{equ:P-G-hat}.
If $\partial V_n\cap\T\neq\emptyset$ for some $n$, then $\partial V_n\cap\T$ is an arc or a singleton, and moreover,
\begin{equation}\label{equ:sharp-V-n}
\sharp\,(\partial V_n \cap \widehat{\MP}(G))\leqslant  1 \text{\quad and\quad} (\partial V_n\setminus\T) \cap \widehat{\MP}(G)=\emptyset.
\end{equation}
By the assumptions in Theorem \ref{thm:main-pre}, there exists a constant $D_0\geqslant 1$ such that for any $z\in \widehat{\MP}(G)$, the local degree of $G^{\circ n}$ at $z_n$ is not larger than $D_0$ for all $n\geqslant 1$, where $z_n$ is any point in $G^{-n}(z)$.

Let $\MV_1$ be the set of all components of $G^{-1}(V_0)$ in $\C\setminus\overline{\D}$. Then $\MV_1=\MV_1'\cup\MV_1''$, where
\begin{itemize}
\item $\sharp\MV_1'<\infty$ and each $V_1\in\MV_1'$ satisfies $\overline{V}_1\cap\T\neq\emptyset$; and
\item $\sharp\MV_1''=\infty$ and each $V_1\in\MV_1''$ satisfies $\overline{V}_1\cap\T=\emptyset$.
\end{itemize}
By Lemma \ref{lem:take-pre-2}, there exists a bounded Jordan disk $U_0$ containing $\overline{V}_0$ such that for any $V_1\in\MV_1''$ with $\overline{V}_1\cap\T=\emptyset$, any pullback sequence $\{U_n\}_{n\geqslant 0}$ of $U_0$ with $V_1\subset U_1$ satisfies $\overline{U}_n\cap\overline{\D}=\emptyset$ and $\deg(G^{\circ n}: U_n\to U_0)\leqslant D_0$ for all $n\geqslant 1$.

Let $\varepsilon>0$ be given. By Lemma \ref{lem:semi-hyperbolic}, there exists $N_1>0$ such that $\diam_{\EC}(V_n)<\varepsilon$ for all $n\geqslant N_1$, where $\{V_n\}_{n\geqslant 0}$ is any pullback sequence of $V_0$ satisfying $V_1\in\MV_1''$.

\medskip
\textit{Step 4. Second jumping off or not}.
Let $r=r(d,{\varepsilon}/{2})>0$ be the number introduced in Lemma \ref{lem:back-1}. We consider two cases:

Case (1.1). Suppose $|\overline{V}_1\cap\T|<r$ for all $V_1\in\MV_1'$.
Then each Jordan disk $V_1\in\MV_1'$ is contained in another bounded Jordan disk $\widetilde{V}_1\subset\C\setminus\overline{\D}$, which is the union of finitely many small Jordan disks $H_d(I_1)$ and $\{W_k:1\leqslant k\leqslant M\}$, where $|I_1|<r$ and $\overline{W}_k\cap \MP(G)=\emptyset$ for each $k$.
By Lemma \ref{lem:back-1}, for any pullback sequence $\{\widetilde{V}_n\}_{n\geqslant 1}$ of $\widetilde{V}_1:=H_d(I_1)$, we have $\diam_{\EC} (\widetilde{V}_n) < \varepsilon/2$ for all $n \geqslant 1$.
By Lemma \ref{lem:semi-hyperbolic}, the spherical diameter of the $n$-th preimage of each $W_k$ tends to zero as $n\to\infty$. Since $\MV_1'$ is a finite set, there exists $N_1'>0$ such that $\diam_{\EC}(V_n)<\varepsilon$ for all $n\geqslant N_1'$, where $\{V_n\}_{n\geqslant 0}$ is any pullback sequence of $V_0$ satisfying $V_1\in\MV_1'$.
Thus in this case the proposition is proved by setting $N:=\max\{N_1,N_1'\}$.

Case (1.2). Suppose there exists $V_1\in\MV_1'$ such that $|\overline{V}_1\cap\T|\geqslant r$. We go back to the similar arguments in Step 3 as the following.
Let $\MV_2$ be the set of all components of $G^{-1}(\MV_1')$ in $\C\setminus\overline{\D}$. Then $\MV_2=\MV_2'\cup\MV_2''$, where
\begin{itemize}
\item $\sharp\MV_2'<\infty$ and each $V_2\in\MV_2'$ satisfies $\overline{V}_2\cap\T\neq\emptyset$; and
\item $\sharp\MV_2''=\infty$ and each $V_2\in\MV_2''$ satisfies $\overline{V}_2\cap\T=\emptyset$.
\end{itemize}
For each $V_1\in\MV_1'$, by \eqref{equ:sharp-V-n} and Lemma \ref{lem:take-pre-2}, there exists a bounded Jordan disk $U_1$ containing $\overline{V}_1$ such that for any $V_2\in\MV_2''$ with $\overline{V}_2\cap\T=\emptyset$, any pullback sequence $\{U_n\}_{n\geqslant 1}$ of $U_1$ with $V_2\subset U_2$ satisfies $\overline{U}_n\cap\overline{\D}=\emptyset$ and $\deg(G^{\circ (n-1)}: U_n\to U_1)\leqslant D_0$ for all $n\geqslant 2$.
By Lemma \ref{lem:semi-hyperbolic}, there exists $N_2>0$ such that $\diam_{\EC}(V_n)<\varepsilon$ for all $n\geqslant N_2$, where $\{V_n\}_{n\geqslant 0}$ is any pullback sequence of $V_0$ satisfying $V_1\in\MV_1'$ and $V_2\in\MV_2''$.

\medskip
\textit{Step 5. Consecutive jumping offs or not}.
Similarly, next we need to consider the following two cases:
\begin{itemize}
\item Case (2.1). $|\overline{V}_2\cap\T|<r$ for all $V_2\in\MV_2'$;
\item Case (2.2). There exists $V_2\in\MV_2'$ such that $|\overline{V}_2\cap\T|\geqslant r$.
\end{itemize}
Similar to Step 4, for Case (2.1), there exists $N_2'>0$ such that $\diam_{\EC}(V_n)<\varepsilon$ for all $n\geqslant N_2'$, where $\{V_n\}_{n\geqslant 0}$ is any pullback sequence of $V_0$ satisfying $V_1\in\MV_1'$ and $V_2\in\MV_2'$.
Thus in this case the proposition is proved by setting $N:=\max\{N_1,N_1',N_2,N_2'\}$.
For Case (2.2), the subsequent argument is similar to Case (1.2).

Suppose $\MV_k$, $\MV_k'$ and $\MV_k''$ are defined for all $2\leqslant k\leqslant n$ such that $\MV_k=\MV_k'\cup\MV_k''$ and $\MV_k$ is the set of all components of $G^{-1}(\MV_{k-1}')$ in $\C\setminus\overline{\D}$, where
\begin{itemize}
\item $\sharp\MV_k'<\infty$ and each $V_k\in\MV_k'$ satisfies $\overline{V}_k\cap\T\neq\emptyset$; and
\item $\sharp\MV_k''=\infty$ and each $V_k\in\MV_k''$ satisfies $\overline{V}_k\cap\T=\emptyset$.
\end{itemize}
Similarly as above, let $\MV_{n+1}$ be the set of all components of $G^{-1}(\MV_n')$ in $\C\setminus\overline{\D}$. Then $\MV_{n+1}=\MV_{n+1}'\cup\MV_{n+1}''$, where
\begin{itemize}
\item $\sharp\MV_{n+1}'<\infty$ and each $V_{n+1}\in\MV_{n+1}'$ satisfies $\overline{V}_{n+1}\cap\T\neq\emptyset$; and
\item $\sharp\MV_{n+1}''=\infty$ and each $V_{n+1}\in\MV_{n+1}''$ satisfies $\overline{V}_{n+1}\cap\T=\emptyset$.
\end{itemize}
Hence $\MV_n$, $\MV_n'$ and $\MV_n''$ are defined for all $n\geqslant 1$.

\medskip
\textit{Step 6. The arcs are eventually small and the conclusion}.
Since $G:\T\to\T$ is conjugate to the irrational rotation and there exists at least one critical point on $\T$, there exists a smallest integer $n_0\geqslant 1$ such that $|\overline{V}_{n_0}\cap\T|<r$ for any pullback sequence $\{V_n\}_{n\geqslant 0}$ of $V_0$.
Hence we have the following two cases:
\begin{itemize}
\item $V_n\in\MV_n'$ for all $1\leqslant n \leqslant n_0$;
\item $V_n\in\MV_n''$ for some $1\leqslant n \leqslant n_0$.
\end{itemize}
In both cases, similar to the arguments in Cases (1.1), (1.2) and (2.1), one can apply Lemmas \ref{lem:semi-hyperbolic} and \ref{lem:back-1} so that one can find $N \geqslant 1$, such that for any pullback sequence $\{V_n\}_{n\geqslant 0}$ of $V_0$, we have $\diam_{\EC}(V_n)<\varepsilon$ for all $n\geqslant N$.
The proof of Proposition \ref{prop:thm-pre} and hence Theorem \ref{thm:main-pre} is complete.
\end{proof}

\section{Orbifold metrics and the proof of Theorem \ref{thm:entire}} \label{sec:thm-pf}

In this section, we study the transcendental entire functions $f$ in Theorem \ref{thm:entire}. First we determine the types of Fatou components of $f$. Then an orbifold metric is introduced so that $f$ is uniformly expanding in some domain. Next we use this expanding metric to prove that all Fatou components of $f$ are bounded and finally we prove the local connectivity of the Julia set of $f$.

\subsection{Possible Fatou components}

We first determine all possible types of Fatou components of the functions in Theorem \ref{thm:entire}.

\begin{lem}\label{lem:Fatou-finite}
Let $f$ be a transcendental entire function in Theorem \ref{thm:entire}. Then
$F(f)$ is the union of finitely many bounded type Siegel disks and immediate attracting basins and their preimages.
Each Fatou component of $f$ is simply connected and $\MP(f)\cap F(f)$ is contained in a finite union of Fatou components.
Moreover, every periodic cycle in $J(f)$ is repelling.
\end{lem}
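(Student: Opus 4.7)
My plan is to first rule out non-repelling periodic cycles in $J(f)$, then exclude Baker and wandering domains and derive simple connectivity, and finally establish finiteness of the periodic Fatou components and the covering statement.

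For every periodic cycle in $J(f)$ being repelling: a parabolic periodic point $z_0\in J(f)$ would force the orbit of some critical value (no asymptotic values exist) in its parabolic basin to converge to $z_0$, but that orbit lies in $\MP(f)\cap F(f)$, which is compact in the open set $F(f)$ and hence closed in $\C$, excluding $z_0$. For a Cremer periodic point $z_0\in J(f)$, a Mañé-type theorem for transcendental entire functions (valid since $AV(f)=\emptyset$) supplies a recurrent critical point $c_0$ with $z_0\in\omega(c_0)$. A case analysis rules this out: $c_0\in F(f)$ gives an orbit trapped in $\MP(f)\cap F(f)\subset F(f)$, excluding the accumulation point $z_0\in J(f)$; $c_0$ on a Siegel quasi-circle $\partial\Delta'$ gives $\omega(c_0)=\partial\Delta'$, which carries a minimal return map with no periodic orbit; and $c_0$ in the finite set $\MP(f)\cap J(f)\setminus\bigcup_i\partial\Delta_i$ forces $c_0$ to be eventually periodic with $c_0$ lying in its $\omega$-limit cycle (by recurrence), which must then be superattracting, contradicting $c_0\in J(f)$.

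Since $f\in\MB$, the Eremenko--Lyubich theorem gives $I(f)\subset J(f)$, which excludes Baker domains and, combined with Baker's theorem on multiply connected Fatou components of transcendental entire functions (they are wandering with orbits tending to $\infty$), forces every Fatou component of $f$ to be simply connected. For wandering domains $U$: by Bergweiler--Haruta--Kriete--Meier--Terglane, any subsequential constant limit of $\{f^{\circ n_k}|_U\}$ lies in $\MP(f)$, and hence in $\MP(f)\cap J(f)$ since $\MP(f)\cap F(f)$ is closed in $\C$. The $\omega$-limit set of $z\in U$, being closed and forward invariant, either meets a Siegel boundary $\partial\Delta$ and therefore contains all of $\partial\Delta$ by minimality of the quasi-circle rotation, or lies entirely in the finite set $\MP(f)\cap J(f)\setminus\bigcup_i\partial\Delta_i$. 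In the first case, a tail of the orbit of $z$ is contained in a small Jordan disk $V_0\subset\C\setminus\overline{\Delta}$ with $\overline{V}_0\cap\MP(f)\subset\partial\Delta$, so $U$ lies in a pullback component $V_N$ whose diameter tends to $0$ by Theorem~\ref{thm:main-pre}, contradicting the positive diameter of $U$. The second case is excluded by standard shrinking arguments for $\MB$-class functions with tame postsingular set (as in \cite{Mih12}, \cite{Par22}).

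The only periodic Fatou components are therefore bounded type Siegel disks (by hypothesis (1)) and immediate attracting basins. Each Siegel cycle contains at least one critical point on its quasi-circle boundary by \cite{GS03}, contributing a critical value in $J(f)$; distinct Siegel cycles give distinct such values, so the finiteness of $CV(f)\cap J(f)$ in hypothesis (2) bounds the number of Siegel cycles. Each immediate attracting cycle contains a critical value (no asymptotic values), and critical values from distinct cycles lie in pairwise disjoint Fatou components. If infinitely many such cycles existed, the corresponding critical values would form an infinite sequence in the compact set $\MP(f)\cap F(f)$ with an accumulation point $v_\infty\in\MP(f)\cap F(f)\subset F(f)$ belonging to some Fatou component $W$; but then $W$ would contain infinitely many of these critical values, contradicting that they lie in pairwise disjoint components. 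Hence only finitely many periodic Fatou components exist, and the rest are preimages. Finally, the compactness of $\MP(f)\cap F(f)$ together with the open cover of $F(f)$ by its components yields containment of $\MP(f)\cap F(f)$ in a finite union of Fatou components. The most delicate steps are the Cremer-cycle exclusion (invoking the correct Mañé-type theorem) and the wandering-domain exclusion (leveraging Theorem~\ref{thm:main-pre}).
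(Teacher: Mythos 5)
Your proposal reaches all four conclusions of the lemma and is structurally sound, but it diverges from the paper's proof at several points, and one of those divergences is a genuine soft spot. For the exclusion of Cremer cycles you invoke ``a Ma\~n\'e-type theorem for transcendental entire functions (valid since $AV(f)=\emptyset$)'' to produce a recurrent critical point $c_0$ with the Cremer point in $\omega(c_0)$. No such theorem is established in the generality you need: the functions of Theorem \ref{thm:entire} may have infinitely many critical values (only $CV(f)\cap J(f)$ is assumed finite), and the known transcendental Ma\~n\'e-type results carry additional hypotheses. The paper avoids this entirely by quoting \cite[Corollary 14.4]{Mil06}: every irrationally indifferent periodic point in $J(f)$ is accumulated by infinitely many distinct points of $\MP(f)$; since $\MP(f)\cap F(f)$ is a compact subset of the open set $F(f)$ it cannot accumulate on $J(f)$, and $\MP(f)\cap J(f)$ is finite off the Siegel boundaries, so the accumulation would force the point onto a Siegel boundary, where the dynamics is an irrational rotation with no periodic orbits. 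You should replace the Ma\~n\'e step by this elementary argument (or supply a precise reference covering this class); as written, that step does not go through.

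The remaining differences are legitimate alternatives. For wandering domains you follow exactly the route of the paper's Remark after Lemma \ref{lem:Fatou-finite} (limit functions lie in $\MP(f)'\cup\{\infty\}$ by \cite{BHKMT93}, then Theorem \ref{thm:main-pre} kills accumulation on a Siegel boundary), whereas the paper's main proof uses quasiconformal surgery \cite[Theorem 7.34]{BF14a} to reduce to a postsingularly finite map and then quotes \cite{GK86}, \cite{EL92}. Your second subcase (orbit accumulating only on the finite set $\MP(f)\cap J(f)\setminus\bigcup_i\partial\Delta_i$) is dismissed too vaguely by ``standard shrinking arguments''; it is cleaner to note that a finite, closed, forward-invariant $\omega$-limit set is a union of periodic cycles, all of which are repelling in $J(f)$ by the first part, so a wandering orbit cannot converge to it. For simple connectivity you use Baker's theorem on multiply connected components together with $I(f)\subset J(f)$, while the paper uses the maximum principle for periodic components and Proposition \ref{prop:take-pre} for preimages; both work, though the paper's route also yields the simple connectivity facts it needs later for the surgery. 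Your finiteness counts for Siegel cycles and attracting cycles, and the covering statement for $\MP(f)\cap F(f)$, match the paper's.
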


\begin{proof}
By the assumption that $\MP(f)\cap F(f)$ is compact, we conclude that $f$ has no parabolic basins and $\MP(f)\cap F(f)$ is contained in a finite union of Fatou components. In particular, $\MP(f)$ is disjoint with any wandering domain (if any) of $f$.
Since each immediate attracting cycle contains at least one singular value, it follows that $f$ contains at most finitely many attracting basins.
Since every Siegel disk of $f$ is of bounded type having a quasi-circle boundary in $\C$, according to \cite{GS03}, each cycle of Siegel disks contains at least one critical point and hence at least one critical value on its boundary. By the finiteness of $CV(f)\cap J(f)$, it follows that $f$ has only finitely many Siegel disks and they are all of bounded type.
By \cite[Corollary 14.4]{Mil06}, every irrationally indifferent point in $J(f)$ is accumulated by different points of the postsingular set $\MP(f)$.
Since $\MP(f)\cap J(f)$ is finite outside the union of all Siegel disk boundaries, we conclude that $J(f)$ contains no irrationally indifferent point and hence every periodic cycle in $J(f)$ is repelling.

By \cite[Theorem 1]{EL92}, $f$ has no Baker domains since $f\in\MB$. To exclude the existence of wandering domains, we apply the quasiconformal surgery.
Let $\widehat{F}(f)$ be the union of all Siegel disks and immediate attracting basins of $f$ and their preimages.
By the maximum principle, each periodic Fatou component of $f$ is simply connected.
Since $f$ has no asymptotic values and the number of critical points in each Fatou component is finite, by Proposition \ref{prop:take-pre}, every  component of $\widehat{F}(f)$ is simply connected.
By \cite[Theorem 7.34, p.\,256]{BF14a}, there exist a quasiregular map $H:\C\to\C$ which coincides with $f$ in $\C\setminus K$, where $K$ is a compact subset of $\widehat{F}(f)$, and a quasiconformal mapping $\varphi:\C\to\C$ such that $h=\varphi\circ H\circ\varphi^{-1}:\C\to\C$ is a transcendental entire function and $h$ is postsingularly finite in the Fatou set.
Hence $h$ has only finitely many singular values in $\C$. According to \cite{GK86} or \cite{EL92}, $h$ has no wandering domains and hence $f$ is also.
\end{proof}

\begin{rmk}
For the non-existence of wandering domains of $f$, one can also use another argument as the following.
Assume that $U$ is a wandering domain of $f$. By \cite{BHKMT93}, the limit function of $\{f^{\circ n}|_U\}$ are contained in $\MP(f)'\cup\{\infty\}$, where $\MP(f)'$ is the derived set of $\MP(f)$.
It is well known that any limit function  of $\{f^{\circ n}|_U\}$ is a constant.
Since $f\in\MB$, we cannot have $f^{\circ n}|_U\to\infty$ \cite{EL92}. By the finiteness of $CV(f)\cap J(f)$, there exists a subsequence $\{f^{\circ n_k}|_U\}$ whose limit is a finite point $a$, which must lie on the boundary of some bounded type Siegel disk $\Delta$. Let $\widetilde{V}_0$ be an open neighborhood of $a$ such that $V_0:=\widetilde{V}_0\setminus\overline{\Delta}$ is bounded Jordan disk satisfying $\overline{V}_0 \cap \MP(f)\subset \partial\Delta$. By Theorem \ref{thm:main-pre}, for any $\varepsilon>0$, there exists $N \geqslant 1$, such that $\diam_{\EC}(V_n)<\varepsilon$ for all $n\geqslant N$, where $V_n$ is any connected component of $f^{-n}(V_0)$. This implies that $\diam_{\EC}(U)$ should be arbitrarily small, which is a contradiction.
\end{rmk}

For any transcendental entire function $f$, we have $\MP(f)=\MP(f^{\circ n})$ for any $n\geqslant 1$. If $f$ is a function  in Theorem \ref{thm:entire}, then it is straightforward to verify that $f^{\circ n}$ is also (based on Proposition \ref{prop:take-pre}).
Our aim is to study the local connectivity of $J(f)$.
Based on Lemma \ref{lem:Fatou-finite}, performing a quasiconformal surgery (see \cite[Theorem 7.34, p.\,256]{BF14a}) and iterating $f$ finitely many times if necessary, we assume that each transcendental entire function $f$ in Theorem \ref{thm:entire} satisfies
\begin{itemize}
\item all periodic Fatou components of $f$ have period one; and
\item $f$ is postsingularly finite in the Fatou set.
\end{itemize}
Suppose $f$ has exactly $p\geqslant 1$ fixed bounded type Siegel disks $\Delta_1,\cdots,\Delta_{p}$ and $q\geqslant 0$ fixed immediate attracting basins $A_1,\cdots, A_{q}$ (note that $f$ may have no attracting basins). There exist bounded Jordan disks $B_j\Subset A_j$ such that
\begin{equation}\label{equ:P-compact}
\MP(f)\cap A_j\subset B_j \text{\quad and\quad} f(B_j)\Subset B_j
\end{equation}
for all  $1\leqslant j\leqslant q$.
Denote
\begin{equation}\label{equ:W}
W:=\C\setminus\Big(\bigcup_{i=1}^{p} \overline{\Delta}_i\cup \bigcup_{j=1}^{q}\overline{B}_j\Big).
\end{equation}
Then $f^{-1}(W)\subset W$ and $\MP(f)\cap W$ is a finite set. Note that every $\partial\Delta_i$ is a quasi-circle and $\overline{\Delta}_{i_1}\cap\overline{\Delta}_{i_2}=\emptyset$ for any integers $1\leqslant i_1\neq i_2\leqslant p$.

\subsection{Expanding orbifold metrics}

Let $\N_+:=\{1,2,3,\cdots\}$ be the set of all positive integers. The following definition was introduced in \cite{Thu84} (see also \cite[Appendix A.2]{McM94b} and \cite[Appendix E]{Mil06}).

\begin{defi}[Orbifolds]
A \textit{Riemann orbifold} (\textit{orbifold} in short) is a pair $\MO=(S,\nu)$ consisting of an \textit{underlying} Riemann surface $S$ and a ramification function $\nu:S\to\N_+$ which takes the value $\nu(z)=1$ except on a discrete closed subset. A point $z\in S$ is called \textit{ramified} if $\nu(z)\geqslant 2$ and \textit{unramified} otherwise.
\end{defi}

Suppose $h:\widetilde{S}\to S$ is a holomorphic map between two Riemann surfaces, and $\widetilde{\MO}=(\widetilde{S},\widetilde{\nu})$ and $\MO=(S,\nu)$ are orbifolds. Then
\begin{itemize}
\item $h:\widetilde{\MO}\to \MO$ is \textit{holomorphic} if $\nu(h(w))$ divides $\deg_{w}(h)\,\widetilde{\nu}(w)$ for all $w\in \widetilde{S}$, where $\deg_{w}(h)$ is the local degree of $h$ at $w$;
\item $h:\widetilde{\MO}\to \MO$ is an \textit{orbifold covering} if $h:\widetilde{S}\to S$ is a branched covering and $\nu(h(w))=\deg_{w}(h)\,\widetilde{\nu}(w)$ for all $w\in \widetilde{S}$; and
\item if $h:\widetilde{\MO}\to \MO$ is an orbifold covering, $\widetilde{S}$ is simply connected and $\widetilde{\nu}\equiv 1$ on $\widetilde{S}$, then $h$ is called a \textit{universal covering}.
\end{itemize}

With only two exceptions, each orbifold $\MO=(S,\nu)$ has a universal covering surface $\widetilde{S}$ which is conformally equivalent to one of $\EC$, $\C$ and $\D$. In the last case the orbifold is called \textit{hyperbolic}, and we only consider such type of orbifolds in this paper. In particular, if $S$ is a hyperbolic Riemann surface, then $\MO$ is a hyperbolic orbifold (see \cite[Theorem A.2]{McM94b}) and the universal covering $h:\D\to \MO$ descends the hyperbolic metric $\rho_{\D}(w)|dw|$ in $\D$ to the \textit{orbifold metric} $\rho_{\MO}(z)|dz|$ in $S$ which satisfies $\rho_{\MO}(h(w))|h'(w)|=\rho_{\D}(w)$. If $S\subset\C$, then the induced orbifold metric $\rho_{\MO}(z)|dz|$ is topological equivalent to the Euclidean metric in $S$.

The well-known Schwarz-Pick theorem for hyperbolic Riemann surfaces generalizes to the setting of hyperbolic orbifolds \cite[Proposition 17.4]{Thu84} (see also \cite[Theorem A.3]{McM94b}). In particular, it has the following consequence.

\begin{lem}\label{lem:Thurston}
Let $\widetilde{\MO}=(\widetilde{S},\widetilde{\nu})$ and $\MO=(S,\nu)$ be two hyperbolic orbifolds such that $\widetilde{S}\subset S$. Suppose $h: \widetilde{\MO}\to \MO$ is an orbifold covering map, and that the inclusion $\widetilde{\MO}\hookrightarrow \MO$ is holomorphic but not an orbifold covering. Then for all unramified $z \in \widetilde{S}$,
\begin{equation}
\|D h(z)\|_{\MO}:=\frac{\rho_{\widetilde{\MO}}(z)}{\rho_{\MO}(z)}=\frac{\rho_{\MO}(h(z))\,|h'(z)|}{\rho_{\MO}(z)}>1.
\end{equation}
\end{lem}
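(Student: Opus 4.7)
The plan is to reduce the statement to two applications of the orbifold version of the Schwarz--Pick theorem: one in the equality case for the covering $h$, and one in the strict case for the inclusion $\iota:\widetilde{\MO}\hookrightarrow\MO$. First, because $h:\widetilde{\MO}\to\MO$ is an orbifold covering, any universal covering $\pi_{\widetilde{\MO}}:\D\to\widetilde{\MO}$ composed with $h$ is itself a universal orbifold covering of $\MO$; hence there is a conformal automorphism $\gamma$ of $\D$ with $h\circ\pi_{\widetilde{\MO}}=\pi_{\MO}\circ\gamma$. Pulling the hyperbolic metric on $\D$ down to $\widetilde{\MO}$ and $\MO$ through these covers and using that $\gamma$ is a hyperbolic isometry, I obtain the identity $\rho_{\MO}(h(z))\,|h'(z)|=\rho_{\widetilde{\MO}}(z)$ at every unramified point $z\in\widetilde{S}$. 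Divided by $\rho_{\MO}(z)$, this is exactly the middle equality in the stated formula.

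Next, I would apply the orbifold Schwarz--Pick theorem to the inclusion $\iota:\widetilde{\MO}\hookrightarrow\MO$, which is holomorphic by hypothesis. The theorem (as formulated in \cite[Proposition 17.4]{Thu84} and \cite[Theorem A.3]{McM94b}) gives $\rho_{\MO}(z)\leqslant\rho_{\widetilde{\MO}}(z)$ at every unramified $z\in\widetilde{S}$, together with a rigidity clause: either $\iota$ is an orbifold covering or the inequality is strict everywhere. Since by assumption $\iota$ is \emph{not} an orbifold covering, the strict alternative holds, i.e.\ $\rho_{\MO}(z)<\rho_{\widetilde{\MO}}(z)$ on the unramified locus of $\widetilde{S}$. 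Combining this with the previous identity, at every unramified $z\in\widetilde{S}$ one has
\[
\|Dh(z)\|_{\MO}=\frac{\rho_{\MO}(h(z))\,|h'(z)|}{\rho_{\MO}(z)}=\frac{\rho_{\widetilde{\MO}}(z)}{\rho_{\MO}(z)}>1,
\]
as claimed.

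The only delicate ingredient is the rigidity clause in the orbifold Schwarz--Pick theorem, but this is standard: one lifts $\iota$ through the universal covers $\pi_{\widetilde{\MO}}$ and $\pi_{\MO}$ to a holomorphic self-map $\widetilde{\iota}:\D\to\D$, applies the classical Schwarz--Pick rigidity on $\D$, and descends. The hypothesis that $\iota$ is \emph{holomorphic as an orbifold map} (so $\nu(\iota(z))$ divides $\deg_z(\iota)\,\widetilde{\nu}(z)=\widetilde{\nu}(z)$) is precisely what guarantees that such a lift $\widetilde{\iota}$ exists. Once that lift is in hand, everything reduces to the statement on $\D$, and no other obstacle remains.
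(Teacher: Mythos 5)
Your argument is correct and is exactly the derivation the paper intends: the paper states this lemma as an immediate consequence of the orbifold Schwarz--Pick theorem of Thurston/McMullen, and you obtain it by combining the isometry (equality) case for the orbifold covering $h$ with the strict-contraction (rigidity) case for the non-covering inclusion $\widetilde{\MO}\hookrightarrow\MO$. Your added justification of the middle identity via $h\circ\pi_{\widetilde{\MO}}=\pi_{\MO}\circ\gamma$ and of the existence of the lift of the inclusion is sound and fills in the details the paper leaves to the cited references.
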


In the following, we construct a hyperbolic orbifold induced by $f$ and the idea is strongly inspired by \cite[\S 3]{Mih12} and \cite[\S 5]{Par22}.
By \cite{Bak68}, repelling periodic points of $f$ are dense in $J(f)$. Let $\MC\subset J(f)$ be a cycle of repelling periodic points of $f$ such that $\MC\cap\MP(f)=\emptyset$. Then we have $\MC\Subset W$, where $W$ is the hyperbolic Riemann surface introduced in \eqref{equ:W}. Define
\begin{equation}\label{equ:nu}
\nu(z):=
\left\{
\begin{array}{ll}
2  &~~~~~~~\text{if}~ z\in\MC, \\
\text{lcm} \big\{\deg_{w}(f^{\circ n}): f^{\circ n}(w)=z \text{ for some } n\geqslant 1\big\} &~~~~~~\text{if}~ z\in W\setminus\MC.
\end{array}
\right.
\end{equation}
Since $\MP(f)\cap W$ is finite and the local degrees of all critical points of $f$ have a uniform upper bound, we conclude that $\nu: W\to\N_+$ is well-defined. Then
\begin{equation}\label{equ:O-f}
\MO_f:=(W,\nu)
\end{equation}
is a hyperbolic orbifold.
Let $\widetilde{W}$ be any connected component of $f^{-1}(W)$. Then $\widetilde{W}$ is a proper subset of $W$. The hyperbolic orbifold
\begin{equation}\label{equ:nu-tilde}
\widetilde{\MO}_f:=(\widetilde{W},\widetilde{\nu}) \text{\quad is defined by\quad}\widetilde{\nu}(z):=\frac{\nu(f(z))}{\deg_z(f)} \text{ for }z\in \widetilde{W}.
\end{equation}
Since $f : \widetilde{W}\to W$ is a branched covering, based on \eqref{equ:nu} and \eqref{equ:nu-tilde}, it is easy to verify that $f:\widetilde{\MO}_f\to \MO_f$ is an orbifold covering map. Moreover, the inclusion $\widetilde{\MO}_f\hookrightarrow \MO_f$ is holomorphic but not an orbifold covering.

\medskip
By \cite[Proof of Lemma 5.1]{Rem09b} (see also \cite[Proof of Proposition 3.4]{Mih10}), the assumption that $f\in\MB$ in Theorem \ref{thm:entire} implies that there exist a constant $K>1$ and a sequence $\{z_i\}_{i\geqslant 0}\subset \widetilde{W}$ such that
\begin{equation}
f(z_i)\in\MC, \quad \widetilde{\nu}(z_i)=2 \text{\quad and\quad} |z_i|<|z_{i+1}|\leqslant K|z_i| \text{\quad for all } i\geqslant 0.
\end{equation}
By \cite[\S\S4.2 and 4.3]{Mih12} (see also \cite[\S 5]{Par22} for more precise estimate), we have
\begin{equation}
\frac{1}{\rho_{\widetilde{\MO}_f}(z)}\leqslant O(|z|) \text{\quad and\quad}\rho_{\MO_f}(z)\leqslant O\Big(\frac{1}{|z|\log|z|}\Big)
\end{equation}
as $z\to\infty$ in $\widetilde{W}$ and $W$ respectively. This implies that
\begin{equation}\label{equ:expand-infty}
\frac{\rho_{\widetilde{\MO}_f}(z)}{\rho_{\MO_f}(z)}\to\infty \text{\quad as } z\to\infty \text{ in }\widetilde{W}.
\end{equation}

By \eqref{equ:P-compact}, we have $\partial \widetilde{W}\setminus\big(\bigcup^{p}_{i=1}\partial\Delta_i\big)\subset W$.
For any $\delta>0$, we denote
\begin{equation}\label{equ:W-tilde-delta}
\widetilde{W}_\delta:=\{z\in\widetilde{W}: \dist_{\C}\Big(z,\,\,\bigcup^{p}_{i=1}\overline{\Delta}_i\Big)>\delta\}.
\end{equation}
Based on \eqref{equ:expand-infty} and Lemma \ref{lem:Thurston}, we have the following immediate result.

\begin{lem}[Uniform expansion]\label{lem:expand-orbifold}
For every $\delta>0$, there exists $\lambda=\lambda(\delta)>1$ such that for all unramified $z \in \widetilde{W}_\delta$, we have
\begin{equation}
\| Df(z) \|_{\MO_f}=\frac{\rho_{\widetilde{\MO}_f}(z)}{\rho_{\MO_f}(z)}=\frac{\rho_{\MO_f}(f(z))\,|f'(z)|}{\rho_{\MO_f}(z)}\geqslant \lambda>1.
\end{equation}
\end{lem}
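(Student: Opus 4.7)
The plan is to combine the strict pointwise inequality from Lemma \ref{lem:Thurston} with the asymptotic blow-up supplied by \eqref{equ:expand-infty} via a compactness argument. The distance condition defining $\widetilde{W}_\delta$ is tailored so that one avoids the only piece of $\partial\widetilde{W}$ on which both orbifold metrics $\rho_{\MO_f}$ and $\rho_{\widetilde{\MO}_f}$ can blow up simultaneously, namely whatever portion of the Siegel disk boundaries $\bigcup_{i=1}^{p} \partial\Delta_i$ lies in $\partial\widetilde{W}$.

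First I would apply \eqref{equ:expand-infty} to fix a radius $R = R(\delta)>0$ such that $\rho_{\widetilde{\MO}_f}(z)/\rho_{\MO_f}(z)\geqslant 2$ for every unramified $z\in\widetilde{W}$ with $|z|\geqslant R$. Next I would describe $\partial\widetilde{W}$: since $f(B_j)\Subset B_j$ forces $f(\overline{B}_j)\subset B_j$, no point of $\bigcup_{j=1}^{q}\partial B_j$ can lie in $\partial\widetilde{W}$, so $\partial\widetilde{W}$ decomposes into a (possibly empty) piece of $\bigcup_{i=1}^{p}\partial\Delta_i$ and a remainder $\Gamma\subset f^{-1}(\partial W)$ sitting inside the open set $W$. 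Near any point of $\Gamma$ the metric $\rho_{\MO_f}$ is locally bounded (away from the isolated orbifold singularities), whereas $\rho_{\widetilde{\MO}_f}$ diverges along $\partial\widetilde{W}$. A direct comparison of the boundary-blowup exponent of $\rho_{\widetilde{\MO}_f}$ with the orbifold-blowup exponent of $\rho_{\MO_f}$ shows that the ratio tends to $+\infty$ as $z\to\Gamma$ from inside $\widetilde{W}$; consequently one can fix a neighborhood $N$ of $\Gamma$ in $\widetilde{W}$ on which the ratio is again $\geqslant 2$.

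The remaining region $K:=\widetilde{W}_\delta\cap\overline{\D(0,R)}\setminus N$ is closed and bounded in $\C$ and, by construction, is separated both from $\bigcup_{i=1}^{p}\partial\Delta_i$ (via the $\delta$-condition) and from $\Gamma$ (via the removal of $N$), hence is a compact subset of $\widetilde{W}$. A short computation from \eqref{equ:nu} and \eqref{equ:nu-tilde} yields $\widetilde{\nu}(z)\geqslant \nu(z)$ for every $z\in\widetilde{W}$, and from this one checks that $z\mapsto\rho_{\widetilde{\MO}_f}(z)/\rho_{\MO_f}(z)$ is lower semicontinuous on $K$, strictly greater than $1$ at unramified points by Lemma \ref{lem:Thurston}, and tending to $+\infty$ at the finitely many ramified points inside $K$. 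A standard compactness argument then produces a uniform lower bound $\lambda_0>1$ on $K$, and the lemma follows with $\lambda:=\min\{2,\lambda_0\}$.

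I expect the main obstacle to be the boundary analysis of $\widetilde{W}$ together with the behaviour of $\rho_{\MO_f}$ near ramified points on $\Gamma$: one must verify that removing a $\delta$-neighbourhood of $\bigcup_{i=1}^p\overline{\Delta}_i$ genuinely excises every piece of $\partial\widetilde{W}$ at which both orbifold densities diverge, and that the domination $\widetilde{\nu}\geqslant \nu$ rules out the a priori worrisome scenario of the ratio collapsing to $0$ at orbifold singularities. Once this geometric picture is in place, the rest reduces to the divergence at infinity from \eqref{equ:expand-infty} combined with routine compactness.
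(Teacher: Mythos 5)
Your proposal takes the same route the paper intends: the paper gives no written proof at all, declaring the lemma an ``immediate result'' of \eqref{equ:expand-infty} together with Lemma \ref{lem:Thurston}, and what you supply is precisely the compactness argument that makes this immediate — blow-up of the ratio at infinity, blow-up along the part of $\partial\widetilde{W}$ lying inside $W$, exclusion of the Siegel boundaries by the $\delta$-condition, and a uniform bound on the remaining compact set via the strict pointwise inequality. The boundary analysis (that $f(B_j)\Subset B_j$ keeps $\partial B_j$ out of $\partial\widetilde{W}$, so $\partial\widetilde{W}\setminus\bigcup_i\partial\Delta_i\subset W$, and that $\rho_{\widetilde{\MO}_f}\geqslant\rho_{\widetilde{W}}\gtrsim 1/\dist(\cdot,\partial\widetilde{W})$ beats the sub-linear orbifold singularity of $\rho_{\MO_f}$ there) and the verification $\nu\,|\,\widetilde{\nu}$ are both correct and are exactly what the paper leaves implicit.

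One step is stated too strongly: it is not true that the ratio tends to $+\infty$ at \emph{every} ramified point of $K$. At a point $z_0$ with $\widetilde{\nu}(z_0)=\nu(z_0)$ — for instance any point of the repelling cycle $\MC$ lying in $\widetilde{W}_\delta$, where $\nu=\widetilde{\nu}=2$ and $\deg_{z_0}(f)=1$ — both densities blow up with the same exponent $|z-z_0|^{1/\nu-1}$, and the ratio tends to a \emph{finite} limit. Your compactness argument therefore does not close at such points unless you also know this limit exceeds $1$; this is supplied by the orbifold Schwarz--Pick inequality itself (Thurston's/McMullen's statement holds at ramified points when read in local uniformizing coordinates $\zeta\mapsto z_0+\zeta^{\nu(z_0)}$, and the limit of the density ratio equals the orbifold derivative norm there). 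With that one-line repair — replace ``tends to $+\infty$ at ramified points'' by ``has $\liminf>1$ at ramified points, either because $\widetilde{\nu}>\nu$ forces divergence or because the orbifold Schwarz lemma applies in uniformizing coordinates'' — the argument is complete.
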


This result shows that $f$ uniformly expands the orbifold metric at the points in $\widetilde{W}$ which are uniformly away from Siegel disks.

\subsection{The boundaries of Fatou components}

By Lemma \ref{lem:Fatou-finite}, every Fatou component of $f$ in Theorem \ref{thm:entire} is simply connected (see also \cite[Proposition 3]{EL92}).
In this subsection we will show that they are actually bounded Jordan disks. For this, we first need to prove that all immediate attracting basins are bounded.
However, the appearance of Siegel disks may cause obstruction in the proof. We use the following result to separate the Siegel disks and attracting basins.

\begin{thm}[Landing at periodic points, {\cite[Theorem 1.1]{BR20}}]\label{thm:Benini-Rempe}
Let $f\in\MB$ be a transcendental entire function with a repelling or parabolic periodic point $\zeta$.
Then there is a connected and unbounded set $E$ in the escaping set of $f$ and a period $p$ with the following properties.
\begin{enumerate}
\item $\overline{E}=E \cup\{\zeta\}$ and $\overline{E}$ does not separate the plane;
\item $f^{\circ p}(E)=E$ and $f^{\circ j}(E) \cap E=\emptyset$ for all $1 \leqslant j<p$;
\item for every $\varepsilon>0$, $f^{\circ n}$ tends to $\infty$ uniformly on $\{z \in E:|z-\zeta| \geqslant \varepsilon\}$.
\end{enumerate}
If $\widetilde{\zeta} \neq \zeta$ is a different repelling or parabolic periodic point and $\widetilde{E}$ is a set as above for $\widetilde{\zeta}$ then $E \cap \widetilde{E}=\emptyset$.
\end{thm}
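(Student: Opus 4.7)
Since the statement is cited verbatim from \cite{BR20}, the task here is to outline how one would prove such a landing theorem for transcendental entire functions in class $\MB$. My plan follows the strategy developed by Rempe and collaborators for producing ``dreadlocks'' in the escaping set.

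The first step is to pass to logarithmic coordinates. Choose $R > 0$ so large that $S(f) \subset \D(0,R/2)$; this is possible because $f \in \MB$. Let $\MT := f^{-1}(\{|w| > R\})$ and lift via $\exp$ to obtain a map $F : \exp^{-1}(\MT) \to \{\re w > \log R\}$ whose connected components (the \emph{tracts}) are mapped conformally onto the right half-plane. The dynamics of $f$ on the escaping set $I(f)$ is now encoded combinatorially by the sequence of tracts visited under iteration, the so-called \emph{external address} of the orbit. On each tract, $F$ is strongly expanding in the hyperbolic metric of the half-plane, and standard distortion arguments show that pullbacks along a fixed external address shrink to points in logarithmic coordinates. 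When such an address is periodic and the associated dreadlock is nontrivial, the fixed point of the shift is represented by a connected unbounded set in $I(f)$ landing at a well-defined point in $\C$.

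The second step is to match the combinatorial fixed point to the given dynamical periodic point $\zeta$. Since $\zeta$ is repelling (or parabolic) and does not lie in $S(f)$, one can take a small linearizing (or Fatou-coordinate) neighborhood of $\zeta$ and iterate forward to enter the logarithmic tract region. Choose an orbit $z_n \to \infty$ in $I(f)$ whose forward trajectory eventually respects a periodic cycle of tracts; by contraction of inverse branches of $f^{\circ p}$ near $\zeta$, the corresponding dreadlock $E$ has closure $E \cup \{\zeta\}$, giving property (a). Periodicity of the external address yields $f^{\circ p}(E) = E$; minimality of $p$ as the combinatorial period, together with the disjointness of the tracts in the cycle, yields $f^{\circ j}(E)\cap E = \emptyset$ for $1\leqslant j<p$, which is property (b). Property (c) is immediate from the fact that on each tract $F$ doubles the real part modulo bounded error, so outside any neighborhood of the landing point the points of $E$ must escape at a definite rate.

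The last step addresses disjointness for different landing points $\zeta \neq \widetilde{\zeta}$. If $E \cap \widetilde{E} \neq \emptyset$ then both dreadlocks share the same external address; since under the shift map on addresses an eventually periodic address determines its landing point uniquely via the contraction of pullbacks, one deduces $\zeta = \widetilde{\zeta}$, a contradiction. The main obstacle throughout is that in general class $\MB$ (without a finite-order hypothesis) one cannot expect escaping \emph{arcs} as in \cite{EL92} style results, so $E$ must be constructed as a possibly non-locally-connected continuum; the delicate point is to ensure that $E \cup \{\zeta\}$ remains connected and non-separating, which is precisely where one uses the fact that pullbacks of round disks around $\zeta$ under $f^{\circ p}$ are simply connected Jordan domains shrinking to $\zeta$.
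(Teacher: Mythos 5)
The paper does not prove this statement: it is quoted verbatim from \cite[Theorem 1.1]{BR20} and used as a black box, so there is no internal argument to compare yours against. Judged on its own, your outline reproduces the general framework of Benini--Rempe (logarithmic coordinates, tracts, external addresses, expansion, dreadlocks), but it has two genuine gaps. The first is the assumption that $\zeta$ does not lie in $S(f)$. A repelling periodic point can perfectly well be a singular value --- in Section 4 of this very paper, $0$ is simultaneously an asymptotic value and a repelling fixed point of $f_\lambda(z)=\lambda z e^z$ --- so you cannot begin from a linearizing neighborhood disjoint from the singular set without further justification.

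The second and more serious gap is in your second step. The entire content of the theorem is to produce, for a \emph{prescribed} periodic point $\zeta$, a periodic dreadlock landing at $\zeta$; the sentence ``choose an orbit whose forward trajectory eventually respects a periodic cycle of tracts'' posits exactly what has to be constructed, namely the correct periodic external address. The argument in \cite{BR20} instead takes a curve $\gamma$ joining $\zeta$ to $\infty$, pulls it back under the inverse branch of $f^{\circ p}$ fixing $\zeta$, and shows that the pullbacks converge to a periodic dreadlock whose only finite accumulation point is $\zeta$; the convergence rests on expansion of $f$ with respect to the hyperbolic metric of $\C\setminus\MP(f)$, and this is precisely where the actual hypothesis of \cite{BR20} --- that the postsingular set $\MP(f)$ is \emph{bounded}, which is stronger than the condition $f\in\MB$ quoted in the statement --- is used. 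Without control of $\MP(f)$, the inverse branches need not extend along $\gamma$ and the pullbacks need not avoid singular values, so your sketch as written does not close. (A minor further inaccuracy: ``$F$ doubles the real part modulo bounded error'' is special to exponential-type maps; the correct general statement is the Eremenko--Lyubich expansion estimate for the logarithmic transform, which gives $|F'|$ large when $\re F$ is large.)
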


In fact, Benini and Rempe proved that if further $f$ is \textit{criniferous} (in particular, if $f$ has finite order of growth), then every repelling or parabolic periodic point of $f$ is the landing
point of at least one and at most finitely many periodic hairs \cite[Theorem 1.4]{BR20}.
For partial results about the landing at periodic points of transcendental entire functions, see \cite{Mih10} and \cite{BL14}.

\begin{lem}\label{lem:bd-Fatou}
Let $f$ be a transcendental entire function in Theorem \ref{thm:entire}. Then every Fatou component of $f$ is a bounded Jordan disk.
\end{lem}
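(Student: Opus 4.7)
The plan is to use Lemma \ref{lem:Fatou-finite} together with the normalization immediately preceding this lemma to split the Fatou components of $f$ into three classes: the fixed bounded-type Siegel disks $\Delta_1,\ldots,\Delta_p$ (each already a bounded Jordan disk by hypothesis), the fixed immediate attracting basins $A_1,\ldots,A_q$, and the strict preimage components of one of these. The task reduces to (i) proving that each $A_j$ is a bounded Jordan disk, and (ii) propagating this property to every preimage component via a pullback argument based on Lemma \ref{lem:take-pre}.

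For the boundedness of $A_j$ I would adapt the orbifold-metric strategy of \cite{Mih12} and \cite{Par22}. Under the hypotheses of Theorem \ref{thm:entire}, the postsingular set $\MP(f)$ is compact (combining the compactness of $\MP(f)\cap F(f)$, the finiteness of $\MP(f)\cap J(f)$ off the Siegel boundaries, and the quasi-circle assumption on each $\partial\Delta_i$), and because $f\in\MB$ the escaping set is contained in $J(f)$, so no orbit inside $A_j$ can escape to $\infty$. Together with the uniform orbifold expansion of Lemma \ref{lem:expand-orbifold} on $\widetilde{W}_\delta$, an unbounded $A_j$ would produce an $f$-orbit in $A_j$ whose $\MO_f$-displacement grows geometrically but whose Euclidean image cannot escape, yielding a contradiction. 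Theorem \ref{thm:Benini-Rempe} enters precisely here: it supplies unbounded connected escaping sets in $J(f)$ landing at repelling periodic points, used to guarantee that the orbifold domain $W$ in \eqref{equ:W} can be chosen with $\dist_{\C}(\partial A_j,\overline{\Delta}_i)>0$ for all $i,j$, so that Lemma \ref{lem:expand-orbifold} actually applies in a neighborhood of $\partial A_j$.

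With $A_j$ bounded, I would establish local connectivity of $\partial A_j$ by tracking pullbacks of the fundamental annulus $B_j\setminus f(B_j)$. These pullback annuli tile $A_j\setminus\{a\}$, are contained in $A_j\setminus B_j\subset\widetilde{W}_\delta$ for some uniform $\delta>0$ (since $A_j$ is bounded and disjoint from each $\overline{\Delta}_i$), and carry $f$-restrictions of degrees uniformly bounded by condition (c) of Theorem \ref{thm:entire}. Lemma \ref{lem:expand-orbifold} therefore forces the $\MO_f$-diameters of the pullback annuli to decay geometrically, and hence $\diam_{\EC}$ of these annuli tends to zero. Whyburn's criterion (Lemma \ref{lem:LC-criterion}) then yields local connectivity of $\partial A_j$; the absence of finite asymptotic values in $\overline{A_j}$ and the finiteness of critical points promote this to the statement that $\partial A_j$ is a Jordan curve via the standard prime-end correspondence, so $A_j$ is a bounded Jordan disk.

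Finally, any remaining Fatou component $U$ maps properly under some iterate $f^{\circ n}$ onto one of the bounded Jordan disks $\Delta_i$ or $A_j$, with degree uniformly bounded by condition (c) of Theorem \ref{thm:entire}. Iterative application of Lemma \ref{lem:take-pre}, using $AV(f)=\emptyset$ together with the control of $\MP(f)$ near $\partial\Delta_i\cup\partial A_j$, shows that $U$ is also a bounded Jordan disk. The main obstacle I foresee is the boundedness step for $A_j$: turning the abstract orbifold expansion into a genuine contradiction against an unbounded $A_j$ requires a delicate comparison of the orbifold and Euclidean metrics near the puncture $\infty$ of $W$, and the separation role of Theorem \ref{thm:Benini-Rempe} has to be set up so that $W$ is large enough for Lemma \ref{lem:expand-orbifold} to operate up to every boundary point of $A_j$.
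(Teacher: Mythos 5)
Your overall architecture matches the paper's: reduce to the fixed immediate attracting basins $A_j$, prove each is a bounded Jordan disk via the orbifold expansion of Lemma \ref{lem:expand-orbifold}, and propagate to preimage components with Proposition \ref{prop:take-pre} and Lemma \ref{lem:take-pre}. The boundedness and local connectivity arguments you sketch for $A_j$ (pullbacks of the fundamental annulus, decay of orbifold diameters, comparison of the orbifold and Euclidean metrics near $\infty$) are exactly what the paper imports from \cite{Par22} and \cite{BFR15}.

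There is, however, a genuine gap at the step you yourself flag as delicate: you never prove that $\overline{A}_j\cap\overline{\Delta}_i=\emptyset$, i.e.\ that $\partial A_j$ has positive distance from each $\overline{\Delta}_i$, which is indispensable for placing $\overline{A}_j\cap f^{-1}(\overline{A}_j\setminus\overline{B}_j)$ inside some $\widetilde{W}_\delta$ where Lemma \ref{lem:expand-orbifold} applies. You assert that Theorem \ref{thm:Benini-Rempe} guarantees this, but its hairs land only at repelling or parabolic periodic points, not on $\partial\Delta_i$, so by themselves they cannot separate $A_j$ from $\Delta_i$; a priori $\partial A_j$ could meet or even contain $\partial\Delta_i$. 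The paper's Steps 2--4 supply the missing construction: one builds two chains of bubbles $\{U_n\}$ and $\{U_n'\}$ (iterated preimages of $\Delta$ attached along the critical orbit on $\partial\Delta$ and along a preimage of that orbit), uses Lemma \ref{lem:semi-hyperbolic} to show their diameters shrink so that, in the bounded case, each chain accumulates at a single repelling fixed point $\zeta$ (respectively a preimage $\zeta'$), and only then attaches the Benini--Rempe hairs $E$ and $E'$ at $\zeta$ and $\zeta'$ to obtain an unbounded closed set $X=\overline{\Delta}\cup X_1$ with $X_1\cap\partial\Delta=\emptyset$, so that no component of $\C\setminus X$ has the whole of $\partial\Delta$ on its boundary. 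The conclusion is then dynamical: if $\partial A\cap\partial\Delta\neq\emptyset$, invariance of $\partial A$ and minimality of the rotation on $\partial\Delta$ would force $\partial\Delta\subset\partial A$, contradicting the separation. Without this chain-of-bubbles argument (and the case analysis according to whether the chains are bounded or unbounded), your appeal to Lemma \ref{lem:expand-orbifold} up to every boundary point of $A_j$ is not justified.
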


\begin{proof}
We divide the proof into several steps:

\medskip
\textit{Step 1. Reduction}.
Let $U$ be a Fatou component of $f$. If $U$ is bounded Jordan disk, then any component of $f^{-1}(U)$ is also by Proposition \ref{prop:take-pre} since each Fatou component contains at most finitely many critical points. Since we have assumed that all Siegel disks are bounded Jordan disks, by Lemma \ref{lem:Fatou-finite}, it is sufficient to prove that every immediate attracting basin is a bounded Jordan disk. As before, without loss of generality, we assume that all Siegel disks and immediate attracting basins of $f$ have period one and $f$ is postsingularly finite in the Fatou set.

\medskip
\textit{Step 2. Siegel disks and bubble structure}.
Let $\Delta:=\Delta_i$ with $1\leqslant i\leqslant p$ be a fixed Siegel disk of $f$. For any component $U_1$ of $f^{-1}(\Delta)$, the map $f:U_1\to\Delta$ is proper and $f:\partial U_1\to\partial\Delta$ is a finite covering map.
Therefore, there exists a pullback sequence $\{U_n\}_{n\geqslant 0}$ of $U_0:=\Delta$ such that
\begin{itemize}
\item each $U_n$ is bounded Jordan disk for $n\geqslant 1$;
\item $U_1\neq\Delta$ and $\overline{U}_1\cap\overline{\Delta}=\{z_0\}$ is a critical point on $\partial\Delta$; and
\item $\overline{U}_{n+1}\cap\overline{U}_n=\{z_n\}$ is an $n$-th preimage of $z_0$ for all $n\geqslant 1$.
\end{itemize}
Moreover, there exists another sequence $\{U_n'\}_{n\geqslant 0}$ with $U_0'=\Delta$ such that
\begin{itemize}
\item each $U_n'$ is bounded Jordan disk for $n\geqslant 1$;
\item $U_1'\neq\Delta$ and $\overline{U_1'}\cap\overline{\Delta}=\{z_0'\}$ is the unique preimage of $z_0$ on $\partial\Delta$; and
\item  $f(U_n')=U_n$ and $\overline{U_{n+1}'}\cap\overline{U_n'}=\{z_n'\}$ is a preimage of $z_n$ for all $n\geqslant 1$.
\end{itemize}
Then $\big(\bigcup_{n\geqslant 0} \overline{U}_n\big)\cap\big(\bigcup_{n\geqslant 0} \overline{U_n'}\big)=\emptyset$.
Since $f$ has no asymptotic values in $\C$, there are following two cases:
\begin{itemize}
\item Case (1). $\bigcup_{n\geqslant 0} \overline{U}_n$ and $\bigcup_{n\geqslant 0} \overline{U_n'}$ are bounded;
\item Case (2). $\bigcup_{n\geqslant 0} \overline{U}_n$ and $\bigcup_{n\geqslant 0} \overline{U_n'}$ are unbounded.
\end{itemize}

\medskip
\textit{Step 3. The existence of repelling fixed point}.
Since $\MP(f)\cap J(f)$ is finite outside the union of all Siegel disk boundaries, there exists $n_0\geqslant 1$ such that $\overline{U}_n\cap \MP(f)=\emptyset$ for all $n\geqslant n_0$. By Lemma \ref{lem:semi-hyperbolic}, we have $\diam_{\EC}(\overline{U}_n)\to 0$ as $n\to\infty$. In particular, we have $\diam_{\EC}(\overline{U}_n\cup \overline{U}_{n-1})\to 0$ as $n\to\infty$.
For the above Case (1), let $w_{n_k} \in \overline{U}_{n_k}$ be a sequence of points converging to a point $\zeta\in\C$. By the continuity, $f(w_{n_k}) \rightarrow f(\zeta)$ as $k\to\infty$. On the other hand, $\dist_{\C}(w_{n_k}, f(w_{n_k})) \leqslant\diam_{\C}(\overline{U}_{n_k}\cup \overline{U}_{n_k-1})\rightarrow 0$ as $k \rightarrow \infty$. So $f(\zeta)=\zeta\in J(f)$. This implies that any limit point of the sequence of sets $\{\overline{U}_n\}_{n\geqslant 0}$ is a fixed point of $f$. Since the set of all limit points is connected and the fixed points of $f$ is discrete, it follows that $\overline{U}_n$ converges to a fixed point $\zeta\in J(f)$ as $n\to\infty$. By Lemma \ref{lem:Fatou-finite}, $\zeta$ is a repelling fixed point. Moreover, $\overline{U_n'}$ converges to $\zeta'\in J(f)$ as $n\to\infty$, which is preimage of $\zeta$.

\medskip
\textit{Step 4. Separating attracting and Siegel}.
By Theorem \ref{thm:Benini-Rempe}, there is a connected and unbounded set $E$ in the escaping set of $f$ such that $\overline{E}=E\cup\{\zeta\}$. This implies that $\overline{E}\cap\partial\Delta=\emptyset$. Let $E'$ be the component of $f^{-1}(E)$ such that $\overline{E'}=E'\cup\{\zeta'\}$. Then $E'\neq E$ is a connected and unbounded set in the escaping set of $f$ satisfying $\overline{E'}\cap\partial\Delta=\emptyset$.
Let $A:=A_j$ be an immediate attracting basin of $f$ with $1\leqslant j\leqslant q$. Then $A$ is contained in a component of $\C\setminus X$, where
\begin{equation}
X:=\overline{\Delta}\cup X_1 \text{\quad and\quad} X_1:=\bigcup_{n\geqslant 1} \overline{U}_n\cup\bigcup_{n\geqslant 1} \overline{U_n'}\cup \overline{E}\cup \overline{E'}.
\end{equation}
Since $X_1\cap\partial\Delta=\emptyset$, it follows that $\C\setminus X$ has no component whose boundary contains the whole $\partial\Delta$.
Assume $\partial A\cap\partial\Delta\neq\emptyset$. Then $\partial\Delta\subset \partial A$ since $f(\partial A)=\partial A$ and the forward orbit of any point in $\partial\Delta$ under $f$ is dense in $\partial\Delta$. This is a contradiction.
Therefore, we have $\overline{\Delta}\cap\overline{A}=\emptyset$ and hence
\begin{equation}\label{equ:Sie-attr-disjoint}
\overline{\Delta}_i\cap\overline{A}_j=\emptyset \text{\quad for all } 1\leqslant i\leqslant p \text{ and } 1\leqslant j\leqslant q.
\end{equation}
For above Case (2), we still have \eqref{equ:Sie-attr-disjoint} by the same argument if we replace the above $X_1$ by $\bigcup_{n\geqslant 1} \overline{U}_n\cup\bigcup_{n\geqslant 1} \overline{U_n'}$.

\medskip
\textit{Step 5. Uniform expansion and the conclusion}.
For $1\leqslant j\leqslant q$, let $\partial A_j$ be contained a component $\widetilde{W}$ of $f^{-1}(W)$, where $W$ is defined in \eqref{equ:W}.
By \eqref{equ:Sie-attr-disjoint}, there exists $\delta>0$ such that $\overline{A}_j\cap f^{-1}(\overline{A}_j\setminus \overline{B}_j)\subset \widetilde{W}_\delta$, where $\widetilde{W}_\delta$ is defined in \eqref{equ:W-tilde-delta}.
Let $\MO_f:=(W,\nu)$ and $\widetilde{\MO}_f:=(\widetilde{W},\widetilde{\nu})$ be the hyperbolic orbifolds defined in the previous subsection.
By Lemma \ref{lem:expand-orbifold}, $f$ is uniformly expanding near the boundary of $A_j$ with respect the orbifold metric $\rho_{\MO_f}(z)|dz|$.
By a completely similar argument to \cite[Proof of Theorem 6.4, $(7)\Rightarrow(1)$]{Par22} (see also \cite[Proof of Theorem 1.10, $(h)\Rightarrow(a)$]{BFR15} or \cite[Theorem 4.1, p.\,94]{CG93}),
we conclude that $A_j$ is bounded and $\partial A_j$ is locally connected. By the maximum principle, $A_j$ is a Jordan disk.
\end{proof}

Note that we have made the assumption that $f$ is postsingularly finite outside the closure of all Siegel disks. One may use the same argument as \cite[Proposition 5.2 and Lemma 5.5]{WYZZ25} to obtain Lemma \ref{lem:bd-Fatou}.

Moreover, one can obtain that every Fatou component of $f$ is a bounded quasi-disk. By the assumption in Theorem \ref{thm:entire}, it sufficient to consider each immediate attracting basin $A$.
For each $z\in\partial A$, there exists a small neighborhood $\D(z,r)$ such that the degree of $f^{\circ n}:V_n\to \D(z,r)$ has a uniform upper bound for any component $V_n$ of $f^{-n}(\D(z,r))$. Hence $f$ is \textit{semi-hyperbolic} (i.e., $f$ satisfies the bounded degree condition) on $\partial A$.
This implies that $\partial A$ is a quasi-circle (see \cite[Theorem 1.1]{CJY94} and \cite[Proposition 6.1]{QWY12}).

\subsection{Proof of Theorem \ref{thm:entire}}\label{subsec-transcend-case}

To prove the local connectivity of a Julia set in $\EC$, we use the following criterion (see {\cite[Theorem 4.4, p.\,113]{Why42}}).

\begin{lem}[LC criterion]\label{lem:LC-criterion}
A compact subset $X$ in $\EC$ is locally connected if and only if the following two conditions hold:
\begin{enumerate}
\item  The boundary of every component of $\EC\setminus X$ is locally connected; and
\item  For any given $\varepsilon>0$, there are only finitely many components of $\EC\setminus X$ whose spherical diameters are greater than $\varepsilon$.
\end{enumerate}
\end{lem}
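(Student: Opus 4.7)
The plan is to prove the two implications of the biconditional separately. The forward direction is a relatively routine application of compactness together with local connectivity; the converse, which depends on both hypotheses (a) and (b) jointly, is the substantive content and requires planar topology.

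For the forward direction, assume $X$ is locally connected. Condition (a) follows from the classical fact that for each component $U$ of $\EC\setminus X$ and each $p\in\partial U$, a connected neighborhood $N\subset X$ of $p$ (supplied by local connectivity at every scale) restricts to a connected neighborhood of $p$ in $\partial U$, via a Torhorst-type argument using that boundary points of $U$ are accessible from $U$. For (b), I would argue by contradiction: if pairwise distinct components $U_n$ of $\EC\setminus X$ all had spherical diameter exceeding $\varepsilon$, choose $a_n,b_n\in\overline{U}_n$ with $\dist_\EC(a_n,b_n)>\varepsilon/2$, and extract a subsequential limit $a_n\to a^*\in X\cup\{\infty\}$. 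Local connectivity of $X$ at $a^*$ supplies a connected neighborhood $N$ of $a^*$ in $X$ of spherical diameter less than $\varepsilon/4$; then $N$ eventually contains $a_n$, but being connected and disjoint from the pairwise disjoint open sets $U_n$, it cannot witness accumulation of infinitely many distinct complementary components at $a^*$, a contradiction.

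For the converse, assume (a) and (b) and suppose for contradiction that $X$ fails to be locally connected at some $x_0\in X$. Then there exist $\varepsilon_0>0$ and a sequence $x_n\to x_0$ in $X$ such that, writing $Y:=X\cap\{z\in\EC:\dist_\EC(z,x_0)\leqslant\varepsilon_0\}$ and letting $K_0$, $K_n$ denote the components of $x_0$ and $x_n$ respectively in $Y$, we have $K_n\ne K_0$ for every $n$. By Zoretti's theorem on disjoint components of planar compacta, each $K_n$ can be enclosed by a Jordan curve $\gamma_n\subset\EC\setminus X$ that stays uniformly close to $K_n$ and avoids $K_0$. Each $\gamma_n$ lies in a complementary component $V_n$ of $\EC\setminus X$, and since $\gamma_n$ separates the near point $x_n$ from points of $K_0$ at spherical distance on the order of $\varepsilon_0$ from $x_0$, the curve $\gamma_n$ (and hence $V_n\supset\gamma_n$) must have spherical diameter at least on the order of $\varepsilon_0$. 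Extracting infinitely many distinct $V_n$ then contradicts (b).

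The main obstacle is precisely the distinctness of the $V_n$: a priori, many curves $\gamma_n$ could share the same complementary component $V$. To rule this out, I would leverage hypothesis (a) jointly with the accumulation of the $\gamma_n$ at $x_0$: if infinitely many $\gamma_n$ lay in a single $V$, then $x_0\in\partial V$ would be approached by a nested sequence (after passing to a subsequence) of Jordan curves $\gamma_n\subset V$, each separating $x_0$ from a definite portion of $\partial V$ near $x_0$; the resulting accumulation of separating arcs would decompose every small neighborhood of $x_0$ in $\partial V$ into mutually disjoint non-merging pieces, violating local connectivity of $\partial V$ at $x_0$. Making this plane-topology argument rigorous — tight choice of the $\gamma_n$, uniform application of Zoretti's theorem, and converting accumulation inside a fixed $V$ into a genuine obstruction against (a) — is the delicate core of the proof; the remainder reduces to standard compactness and continuum theory.
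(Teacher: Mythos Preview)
The paper does not prove this lemma; it is cited from Whyburn's monograph and used as a black box. There is thus no argument in the paper to compare your proposal against.

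Your outline follows the right architecture, but two steps remain open. For the forward implication (b), first take $a_n,b_n\in\partial U_n\subset X$ (otherwise $a_n$ need not lie in your $N\subset X$); more importantly, having $a_n\in N$ for all large $n$ with $N$ small and connected is not yet a contradiction --- nothing so far prevents a small connected piece of $X$ from being touched by infinitely many large complementary components. One way to close this is to pass to the component $W$ of $\EC\setminus(X\setminus N)$ containing $a^*$, show that $U_n\subset W$ for large $n$, and then use that a continuum of diameter $<\varepsilon/4$ has at most one complementary region in $\EC$ of diameter $\geqslant\varepsilon$. For the converse, Zoretti applied to the components $K_n$ of $Y=X\cap\{z:\dist_{\EC}(z,x_0)\leqslant\varepsilon_0\}$ yields separating curves in $\EC\setminus Y$, not in $\EC\setminus X$; when $K_n$ reaches the bounding circle, $\gamma_n$ may meet $X$ outside the ball. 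The usual remedy is to upgrade non-local-connectivity to a convergence continuum at $x_0$ (a sequence of pairwise disjoint subcontinua of $X$ of uniformly positive diameter Hausdorff-converging to a nondegenerate limit through $x_0$) and run the separation argument on those; your plan to invoke (a) when infinitely many $\gamma_n$ fall into a single complementary region is then the correct endgame.
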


The Julia set of any transcendental entire function is an unbounded closed subset of $\C$. It is known that a continuum cannot fail to be locally connected only at a single point (see \cite[Corollary 5.13, p.\,78]{Nad92}). Therefore, the Julia set $J(f)$ of a transcendental entire function $f$ is locally connected in $\C$ if and only if $J(f)\cup\{\infty\}$ is locally connected in $\EC$.

\begin{proof}[Proof of Theorem \ref{thm:entire}]
By Lemma \ref{lem:bd-Fatou}, iterating $f$ if necessary, we assume that all periodic Fatou components of $f$ are fixed. Since all Fatou components of $f$ are bounded Jordan disks, by Lemma \ref{lem:LC-criterion}, it suffices to prove that for any $\varepsilon>0$, there are only finitely many Fatou components of $f$ whose spherical diameters are greater than $\varepsilon$.

Let $U_0$ be a Siegel disk or an immediate attracting basin of $f$. Since the number of critical points in every component of $F(f)$ and the local degrees of all critical points of $f$ have a uniform upper bound, there exists a number $D_0\geqslant 1$ such that $\deg(f^{\circ n}: U_n\to U_0)\leqslant D_0$ for any Fatou component $U_n$ of $f$ satisfying $f^{\circ n}(U_n)=U_0$ and $f^{\circ (n-1)}(U_n)\neq U_0$. By Lemma \ref{lem:bd-Fatou}, the map $f^{\circ n}:\partial U_n\to\partial U_0$ is a covering map with degree at most $D_0$.

If $U_0=\Delta$ is a fixed Siegel disk of $f$, by the assumptions in Theorem \ref{thm:entire}, there exist finitely many bounded Jordan disk $\{V_{0,(i)}:1\leqslant i\leqslant M_0\}$ in $\C\setminus\overline{\Delta}$ such that $\overline{V}_{0,(i)}\cap \MP(f)\neq\emptyset$ is a subarc on $\partial\Delta$ for all $1\leqslant i\leqslant M_0$ and $\partial\Delta\subset\bigcup_{i=1}^{M_0} \overline{V}_{0,(i)}$.
Let $\varepsilon>0$ be given. By Theorem \ref{thm:main-pre}, there exists $N_0\geqslant 1$ such that for all $n\geqslant N_0$ and $1\leqslant i\leqslant M_0$, we have $\diam_{\EC}(V_{n,(i)})<\varepsilon/(D_0M_0)$, where $V_{n,(i)}$ is any component of $f^{-n}(V_{0,(i)})$. Hence we have $\diam_{\EC}(U_n)<\varepsilon$ for all $n\geqslant N_0$.

If $U_0=A$ is a fixed immediate attracting basin of $f$, by the assumptions in Theorem \ref{thm:entire}, there exist finitely many bounded Jordan disk $\{W_{0,(j)}:1\leqslant j\leqslant M_1\}$ such that $W_{0,(j)}\cap \partial A\neq\emptyset$ and $\sharp(\overline{W}_{0,(j)}\cap \MP(f))\leqslant 1$ for all $1\leqslant j\leqslant M_1$ and $\partial A\subset\bigcup_{j=1}^{M_1} \overline{W}_{0,(j)}$.
By Lemma \ref{lem:semi-hyperbolic}, there exists $N_1\geqslant 1$ such that for all $n\geqslant N_1$ and $1\leqslant j\leqslant M_1$, we have $\diam_{\EC}(W_{n,(j)})<\varepsilon/(D_0M_1)$, where $W_{n,(j)}$ is any component of $f^{-n}(W_{0,(j)})$. Hence we have $\diam_{\EC}(U_n)<\varepsilon$ for all $n\geqslant N_1$.

Note that there exists a large number $R=R(\varepsilon)>1$ such that for any subset $X$ of $\EC$ which is disjoint with $\D(0,R)$, then $\diam_{\EC}(X)<\varepsilon$.
By the open mapping theorem, for any given $n\geqslant 1$, the Fatou components $U_n$'s of $f$ satisfying $f^{\circ n}(U_n)=U_0$ and $f^{\circ (n-1)}(U_n)\neq U_0$ can only accumulate at infinity. Hence for given $n\geqslant 1$, there are only finitely many of $U_n$'s have spherical diameters which are greater than $\varepsilon$. By Lemma \ref{lem:Fatou-finite}, the choice of $U_0$ is finite. The proof is complete.
\end{proof}

Note that the maps in Theorem \ref{thm:entire} cannot have parabolic basins. Based on \cite{ARS22}, we believe that one may weaken the conditions of $f$ in Theorem \ref{thm:entire} to be geometrically finite outside the closure of the Siegel disks so that the Julia sets are still locally connected (see \cite[Theorem B]{FY23}).

\section{Locally connected Julia sets with asymptotic values}\label{sec:example}

We have mentioned in the introduction that when the Fatou set of a transcendental entire function contains an asymptotic value, then this function has unbounded Fatou components and the Julia set is not locally connected. However, there exist transcendental entire functions with non-empty Fatou set and locally connected Julia sets containing asymptotic values (see \cite{Mor99} for subhyperbolic examples).
In this section, based on Theorem \ref{thm:main-pre}, we construct some transcendental entire functions with Siegel disks and locally connected Julia sets containing asymptotic values.

\subsection{A special family}

To apply Theorem \ref{thm:main-pre}, we first need to obtain some bounded type Siegel disks with quasi-circle boundaries from transcendental entire functions with asymptotic values.
To this end, we shall apply the polynomial-like renormalization theory to a family of entire functions.

Let $U$ and $V$ be two bounded Jordan disks in $\C$ such that $U\Subset V$. The map $f:U\to V$ is called a \textit{polynomial-like mapping} of degree $d\geqslant 2$ if $f$ is a proper holomorphic map of degree $d$. The \emph{filled Julia set} $K(f)$ is defined as $K(f):=\bigcap_{n\geqslant 0}f^{-n}(V)$ and the \textit{Julia set} $J(f)$ is the topological boundary of $K(f)$.
Two polynomial-like mappings $f_1:U_1\to V_1$ and $f_2:U_2\to V_2$ are said to be \emph{hybrid equivalent} if there exists a quasiconformal mapping $h$ defined from a neighborhood of $K(f_1)$ onto a neighborhood of $K(f_2)$, which conjugates $f_1$ to $f_2$ and the complex dilatation of $h$ on $K(f_1)$ is zero.

We consider the following family
\begin{equation}\label{equ:f-a}
f_\lambda(z)=\lambda ze^z, \text{\quad where } \lambda\in\C\setminus\{0\}.
\end{equation}
Note that $f_\lambda$ has exactly one critical point $-1$ and two singular values in $\C$: one is the critical value $-\lambda/e$ and the other is the asymptotic value $0$ which is also a fixed point.
In particular, if $|\lambda|>1$, then $0$ is a repelling fixed point.
The family $f_\lambda$ has been studied in \cite{Fag95}, \cite{Kre01} and \cite{Gey01} etc.
Based on Douady-Hubbard's straightening theorem (\cite{DH85b}) and the combinatorial structure of fundamental domains in the dynamical planes of $f_\lambda$,
Fagella proved the following result (see \cite[\S 4.3]{Fag95} and Figure \ref{Fig:parameter-exp}).

\begin{lem}\label{lem:Fagella}
There exist infinitely homeomorphic copies $\mathcal{M}_k$, where $k=\pm 1,\pm 2,\cdots$, of the Mandelbrot set $M$ in the parameter space $\C\setminus\overline{\D}$ of $f_\lambda$ such that for each $c\in M$, there exist $\lambda=\lambda(c)\in\mathcal{M}_k$ and two bounded Jordan disks $U_\lambda$, $V_\lambda$ containing the critical point $-1$ such that $f_\lambda^{\circ 2}:U_\lambda\to V_\lambda$ is a quadratic-like mapping and hybird equivalent to $z\mapsto z^2+c$.
\end{lem}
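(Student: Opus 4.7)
The plan is to apply Douady--Hubbard's straightening theorem (\cite{DH85b}) to the second iterate $f_\lambda^{\circ 2}$, producing for each $k\in\Z\setminus\{0\}$ a holomorphic family of quadratic-like restrictions parametrized by a region $\mathcal{M}_k\subset\C\setminus\overline{\D}$. The underlying reason this is feasible is that $f_\lambda$ has exactly one critical point $-1$, and its only other singular value is the asymptotic value $0$, which coincides with a repelling fixed point when $|\lambda|>1$. Thus the critical orbit is the sole dynamical degree of freedom, exactly as for the quadratic family $z\mapsto z^2+c$.

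First I would analyze the inverse branches of $f_\lambda$ near the asymptotic tract at $0$. Solving $\lambda w\,e^{w}=z$ via the Lambert $W$-function, one obtains a principal branch $g_0(z)\approx z/\lambda$ near $0$, together with countably many further univalent branches $g_k$ ($k\in\Z\setminus\{0\}$), each defined on a punctured neighborhood of $0$ and mapping into a fundamental domain $D_k$ with $\mathrm{Re}(D_k)\to-\infty$ as $|k|\to\infty$. For fixed $k$ and $\lambda$ in a suitable open parameter region $\mathcal{M}_k'$, I would construct a bounded Jordan disk $V_\lambda$ containing $-1$ and the critical value $-\lambda/e$, and then let $U_\lambda$ be the component of $(f_\lambda^{\circ 2})^{-1}(V_\lambda)$ containing $-1$. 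The construction should be arranged so that $f_\lambda^{\circ 2}|_{U_\lambda}$ factors as the degree-two map $f_\lambda$ near $-1$ followed by the univalent branch $g_k^{-1}$ (equivalently, pull $V_\lambda$ back once by $g_k$ into a neighborhood of $-\lambda/e$, then once by the degree-two branch of $f_\lambda^{-1}$ capturing $-1$). This forces $f_\lambda^{\circ 2}\colon U_\lambda\to V_\lambda$ to be a proper holomorphic map of degree exactly two, hence polynomial-like of degree two with $U_\lambda\Subset V_\lambda$.

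Second, I would verify that the family $\{f_\lambda^{\circ 2}\colon U_\lambda\to V_\lambda\}_{\lambda\in\mathcal{M}_k'}$ depends holomorphically on $\lambda$ and that the critical value $-\lambda/e$ crosses $\partial V_\lambda$ transversely with winding number one as $\lambda$ traverses $\partial\mathcal{M}_k'$. Douady--Hubbard's straightening theorem then yields a continuous map $\chi_k\colon\mathcal{M}_k\to M$ sending $\lambda$ to the unique $c$ for which $f_\lambda^{\circ 2}|_{U_\lambda}$ is hybrid equivalent to $z\mapsto z^2+c$; the standard winding/degree argument of \cite{DH85b} upgrades $\chi_k$ to a homeomorphism onto $M$ for a suitably shrunk subset $\mathcal{M}_k\subset\mathcal{M}_k'$. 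Because the branches $g_k$ are indexed by $k\in\Z\setminus\{0\}$ and land in pairwise disjoint fundamental domains, the resulting copies $\mathcal{M}_k$ are pairwise disjoint, yielding infinitely many.

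The main obstacle is the combinatorial bookkeeping required to choose the Jordan disks $U_\lambda$ and $V_\lambda$ coherently over parameter space and to verify the transverse crossing hypothesis. One must exhibit puzzle pieces large enough for the iterated pullback by $g_k$ and then by $f_\lambda$ to land compactly inside $V_\lambda$, while simultaneously excluding spurious critical points of $f_\lambda^{\circ 2}$ (namely the $f_\lambda$-preimages of $-1$) from $U_\lambda$ so that the degree is exactly two. Controlling the geometry near both the essential singularity at infinity and the asymptotic tract accumulating at $0$ is delicate, and it is precisely this fundamental-domain analysis that is carried out in \cite[\S 4.3]{Fag95}, on which the present lemma relies.
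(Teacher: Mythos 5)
Your proposal takes essentially the same route as the paper, which gives no proof of this lemma beyond attributing it to Fagella's combination of Douady--Hubbard straightening with a fundamental-domain analysis of $f_\lambda$ (see \cite[\S 4.3]{Fag95}); like the paper, you correctly identify that construction as the substance of the argument and defer to it for the delicate choice of domains. One minor imprecision worth noting: the univalent inverse branches that close up the degree-two return map are branches of $f_\lambda^{-1}$ defined over the neighborhood $V_\lambda$ of the critical point $-1$, landing near the countably many preimages $w_k$ of $-1$ in the horizontal fundamental strips (with $\mathrm{Im}\, w_k\approx 2\pi k$), rather than branches defined only on a punctured neighborhood of the asymptotic value $0$ and landing in the far-left asymptotic tract; this does not change the overall strategy.
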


\begin{figure}[htbp]
  \setlength{\unitlength}{1mm}
  \includegraphics[width=0.47\textwidth]{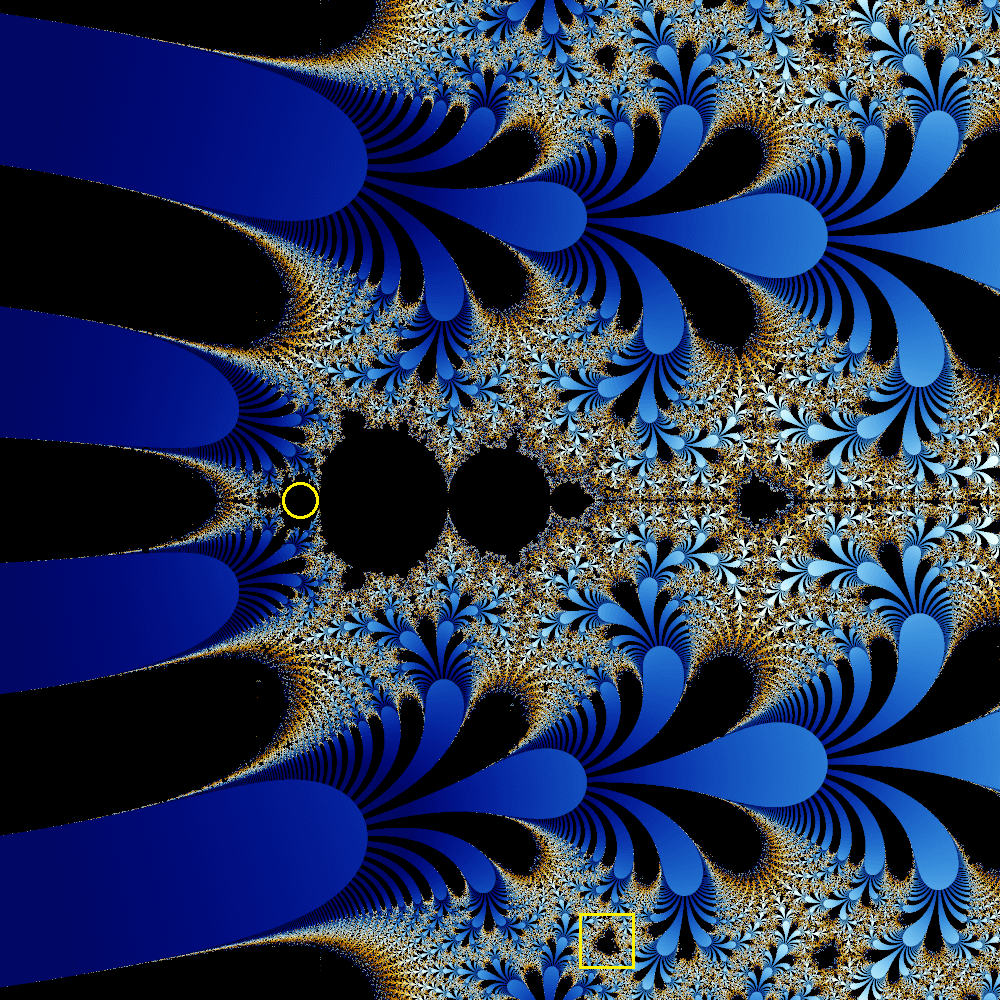}\quad
  \includegraphics[width=0.47\textwidth]{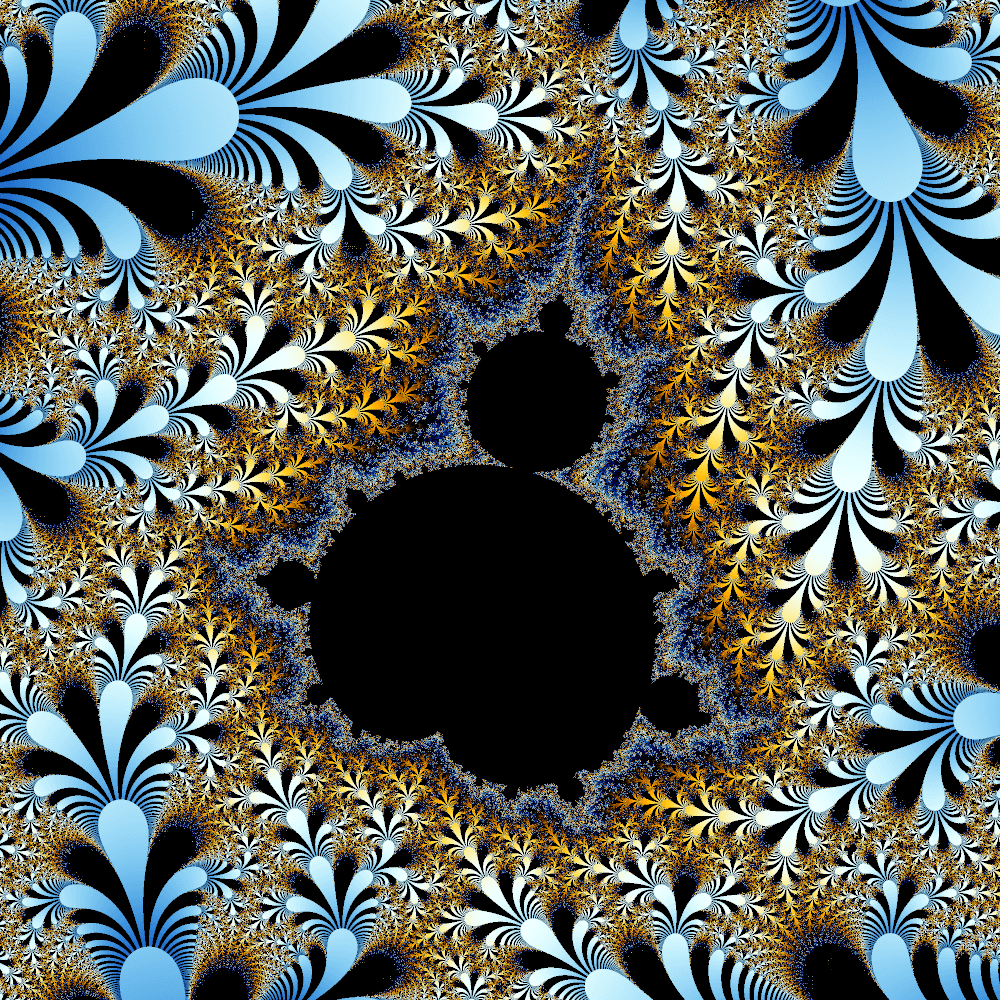}
  \caption{Left: The parameter plane of $f_\lambda(z)=\lambda z e^z$, where the unit circle is marked by a yellow circle. Right: A zoom of the left picture near a Mandelbrot copy, which is inclosed by a yellow box. See also \cite[Figures 8 and 28]{Fag95}.}
  \label{Fig:parameter-exp}
\end{figure}

Based on \cite{Dou87} and \cite{Her87}, as an immediate consequence of Lemma \ref{lem:Fagella}, we have the following result.

\begin{cor}\label{cor:Siegel-polylike}
For each integer $k\neq 0$ and bounded type irrational number $\theta$, there exists $\lambda\in \mathcal{M}_k\subset\C\setminus\overline{\D}$ and two bounded Jordan disks $U_\lambda$, $V_\lambda$ such that $f_\lambda^{\circ 2}:U_\lambda\to V_\lambda$ is hybird equivalent to $z\mapsto e^{2\pi\ii\theta}z+z^2$. In particular, $f_\lambda$ has a $2$-cycle of bounded type Siegel disks whose boundary components are quasi-circles in $\C$ and one of the components contains the critical point $-1$.
\end{cor}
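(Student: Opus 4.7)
The plan is to combine Lemma~\ref{lem:Fagella} with the classical theorems of Douady~\cite{Dou87} and Herman~\cite{Her87} on bounded type Siegel quadratics. For the given $\theta$, I would set $c_\theta:=e^{2\pi\ii\theta}/2-e^{4\pi\ii\theta}/4$, which lies in the main cardioid of the Mandelbrot set $M$ and is chosen so that $z\mapsto z^2+c_\theta$ is affinely conjugate to $P_\theta(z):=e^{2\pi\ii\theta}z+z^2$. Applying Lemma~\ref{lem:Fagella} to $c_\theta$ produces a parameter $\lambda=\lambda(c_\theta)\in\mathcal{M}_k$ together with bounded Jordan disks $U_\lambda\Subset V_\lambda$ containing $-1$ such that the quadratic-like map $f_\lambda^{\circ 2}:U_\lambda\to V_\lambda$ is hybrid equivalent to $z^2+c_\theta$, and therefore to $P_\theta$.

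Next I would transfer the Douady--Herman picture across this hybrid equivalence. Since $\theta$ is of bounded type, $P_\theta$ has a Siegel disk $\Delta_\theta$ centered at $0$ whose boundary is a quasi-circle through the critical point $-e^{2\pi\ii\theta}/2$. The hybrid equivalence is a quasiconformal homeomorphism $\psi$ between neighborhoods of $K(P_\theta)$ and $K(f_\lambda^{\circ 2}|_{U_\lambda})$ whose complex dilatation vanishes on the filled Julia set; in particular $\psi|_{\Delta_\theta}$ is conformal and $\psi$ sends quasi-circles in a neighborhood of $\overline{\Delta_\theta}$ to quasi-circles. Thus $\Delta:=\psi(\Delta_\theta)$ is an $f_\lambda^{\circ 2}$-invariant Siegel disk with rotation number $\theta$, and $\partial\Delta=\psi(\partial\Delta_\theta)$ is a quasi-circle in $\C$ containing $\psi(-e^{2\pi\ii\theta}/2)=-1$.

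Finally, I would upgrade $\Delta$ from a fixed Siegel disk of $f_\lambda^{\circ 2}$ to one component of a genuine $2$-cycle of Siegel disks for $f_\lambda$. The essential input is that each copy $\mathcal{M}_k$ with $k\ne 0$ is, by construction, a period-$2$ renormalization window in Fagella's work~\cite{Fag95}, so $U_\lambda\cap f_\lambda(U_\lambda)=\emptyset$; consequently $\Delta\cap f_\lambda(\Delta)=\emptyset$ and $\{\Delta,\,f_\lambda(\Delta)\}$ forms a $2$-cycle of Siegel disks of $f_\lambda$. The second component has boundary $f_\lambda(\partial\Delta)$, which is again a quasi-circle because the unique critical point $-1$ of $f_\lambda$ lies on $\partial\Delta$ but not on $\partial f_\lambda(\Delta)$, so a univalent local branch of $f_\lambda^{-1}$ defined on a neighborhood of $\partial f_\lambda(\Delta)$ identifies it with the quasi-circle $\partial\Delta$. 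The main obstacle I anticipate is exactly this last step: cleanly extracting from Fagella's combinatorial analysis the disjointness $U_\lambda\cap f_\lambda(U_\lambda)=\emptyset$ that distinguishes the period-$2$ copies $\mathcal{M}_k$ from potential period-$1$ behavior, so that $\Delta$ genuinely yields a $2$-cycle rather than an $f_\lambda$-fixed Siegel disk. Once this combinatorial input is secured, the transfer of the Douady--Herman quasi-circle property through the hybrid conjugacy is routine.
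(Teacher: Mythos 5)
Your proposal is correct and follows exactly the route the paper intends: the paper offers no written proof beyond citing Douady and Herman together with Lemma~\ref{lem:Fagella}, and your argument (choose $c_\theta=e^{2\pi\ii\theta}/2-e^{4\pi\ii\theta}/4$, straighten via the hybrid conjugacy, which is conformal on the filled Julia set and sends the quasi-circle boundary of the quadratic Siegel disk through the critical point to a quasi-circle through $-1$, then invoke the period-$2$ nature of Fagella's renormalization) is precisely the ``immediate consequence'' being asserted. Two small points: $c_\theta$ lies on the \emph{boundary} of the main cardioid rather than in its interior (which is all that is needed, since $c_\theta\in M$), and your flagged worry about the renormalization period is legitimately resolved by Fagella's construction, in which the copies $\mathcal{M}_k$ arise from the critical orbit alternating between two fundamental domains, so the cycle genuinely has period $2$.
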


\subsection{Proof of Theorem \ref{thm:lc-asymp}}

Based on Corollary \ref{cor:Siegel-polylike}, we are in the position to prove Theorem \ref{thm:lc-asymp}.

\begin{proof}[Proof of Theorem \ref{thm:lc-asymp}]
By Corollary \ref{cor:Siegel-polylike}, let $\lambda\in \mathcal{M}_k\subset\C\setminus\overline{\D}$ be given such that $f_\lambda$ has a $2$-cycle of bounded type Siegel disks $\{\Delta_1,\Delta_2\}$ such that $\partial\Delta_1$ and $\partial \Delta_2$ are quasi-circles and $-1\in\partial\Delta_1\cup\partial\Delta_2$.
Since $f_\lambda$ has exactly two singular values $-\lambda/e\in\partial\Delta_1\cup\partial\Delta_2$ and $0$ (which is a repelling fixed point) in $\C$, it follows that $f_\lambda$ has no wandering domains and Baker domains (\cite{GK86}, \cite{EL92}). Moreover, $f_\lambda$ has neither attracting basins nor parabolic basins (see \cite[\S8-\S10]{Mil06}. This implies that the Fatou set of $f_\lambda$ is the union of two periodic bounded type Siegel disks and their preimages.
According to Lemma \ref{lem:take-pre}, for each Fatou component $U$ of $f_\lambda$, the map $f_\lambda:U\to f_\lambda(U)$ is conformal and hence $U$ is a Jordan disk (actually, a quasi-circle).
Applying Theorem \ref{thm:main-pre} to $f_\lambda^{\circ 2}$ and by a completely similar argument as the proof of Theorem \ref{thm:entire}, we conclude that for any $\varepsilon>0$, there are only finitely many Fatou components of $f_\lambda$ whose spherical diameters are greater than $\varepsilon$, and the Julia set of $f_\lambda$ is locally connected by Lemma \ref{lem:LC-criterion}.

\medskip
For the chosen $\lambda$ above and $f:=f_\lambda$, the map $g(z)=e^z+z+\log\lambda$ satisfies $f(e^z)=e^{g(z)}$.
This map is not contained in the class $\MB$.
Let $\gamma:[0,+\infty)\to\C$ be a simple curve in $\C\setminus(\overline{\Delta}_1\cup\overline{\Delta}_2)$ which connects $\gamma(0)=0$ and $\infty=\lim_{t\to\infty}\gamma(t)$. Then $\{z\in\C: e^z\in\C\setminus\gamma\}$ has infinitely many components and each of them contains a $2$-cycle of Siegel disks of $g$.
Since the Julia set of $f$ is locally connected, the Julia set of $g$ is also.
\end{proof}

\begin{rmk}
Note that the Siegel disks in Theorem \ref{thm:lc-asymp} have period two. An interesting problem is to find a transcendental entire function having a \textit{fixed} Siegel disk and a locally connected Julia set with asymptotic values. Consider $f_\rho(z)=e^{1-\rho}z e^z$, where $\rho=e^{2\pi\ii\theta}$ and $\theta$ is a bounded type irrational number. The computer experiment shows that $f_\rho$ has a Siegel disk centered at $\rho-1$ with rotation number $\theta$ whose boundary is a quasi-circle containing the critical point $-1$.
\end{rmk}

%----------------------------------------------------------------------------------------------------------------
\bibliographystyle{amsalpha}
\bibliography{E:/Latex-model/Ref1}

\end{document}